\newcolumntype{M}[1]{>{\centering\arraybackslash}m{#1}} 
\definecolor{linkred}{rgb}{0.7,0.2,0.2}
\definecolor{linkblue}{rgb}{0,0.2,0.6}
\numberwithin{figure}{section}
\DeclareFontFamily{OMS}{rsfs}{\skewchar\font'60}
\DeclareFontShape{OMS}{rsfs}{m}{n}{<-5>rsfs5 <5-7>rsfs7 <7->rsfs10 }{}
\DeclareSymbolFont{rsfs}{OMS}{rsfs}{m}{n}
\DeclareSymbolFontAlphabet{\scr}{rsfs}
\DeclareSymbolFontAlphabet{\scr}{rsfs}
\DeclareMathOperator{\Aut}{Aut}
\DeclareMathOperator{\codim}{codim}
\DeclareMathOperator{\Pic}{Pic}
\DeclareMathOperator{\red}{red}
\DeclareMathOperator{\reg}{reg}
\DeclareMathOperator{\Sym}{Sym}
\DeclareMathOperator{\supp}{supp}
\DeclareMathOperator{\Bl}{Bl}
\DeclareMathOperator{\Gr}{Gr}
\DeclareMathOperator{\gr}{gr}
\DeclareMathOperator{\Supp}{Supp}
\DeclareMathOperator{\mult}{mult}
\newcommand{\sI}{\scr{I}}
\newcommand{\sO}{\scr{O}}
\newcommand{\cC}{\mathcal C}
\newcommand{\cD}{\mathcal D}
\newcommand{\cH}{\mathcal H}
\newcommand{\cI}{\mathcal I}
\newcommand{\cK}{\mathcal K}
\newcommand{\cM}{\mathcal M}
\newcommand{\cS}{\mathcal S}
\newcommand{\cU}{\mathcal U}
\newcommand{\cV}{\mathcal V}
\newcommand{\cX}{\mathcal X}
\newcommand{\bA}{\mathbb{A}}
\newcommand{\bC}{\mathbb{C}}
\newcommand{\bG}{\mathbb{G}}
\newcommand{\bH}{\mathbb{H}}
\newcommand{\bL}{\mathbb{L}}
\newcommand{\bO}{\mathbb{O}}
\newcommand{\bP}{\mathbb{P}}
\newcommand{\bQ}{\mathbb{Q}}
\newcommand{\bR}{\mathbb{R}}
\newcommand{\bS}{\mathbb{S}}
\newcommand{\bX}{\mathbb{X}}
\newcommand{\bZ}{\mathbb{Z}}
\newcommand{\fD}{\mathfrak{D}}
\newcommand{\fg}{\mathfrak{g}}
\newcommand{\fh}{\mathfrak{h}}
\newcommand{\fH}{\mathfrak{H}}
\newcommand{\fn}{\mathfrak{n}}
\newcommand{\fp}{\mathfrak{p}}
\newcommand{\fq}{\mathfrak{q}}
\theoremstyle{plain}
\newtheorem{thm}{Theorem}[section]
\newtheorem{cor}[thm]{Corollary}
\newtheorem{defn}[thm]{Definition}
\newtheorem{lem}[thm]{Lemma}
\newtheorem{prop}[thm]{Proposition}
\theoremstyle{remark}
\newtheorem{c-n-d}[thm]{Claim and Definition}
\newtheorem{example}[thm]{Example}
\newtheorem{notation}[thm]{Notation}
\newtheorem{rem}[thm]{Remark}
\newtheorem{question}[thm]{Question}
\newtheorem*{rem-nonumber}{Remark}
\numberwithin{equation}{thm}
\setlist[enumerate]{label=(\thethm.\arabic*), before={\setcounter{enumi}{\value{equation}}}, after={\setcounter{equation}{\value{enumi}}}}
\newcommand{\factor}[2]{\left. \raise 2pt\hbox{$#1$} \right/\hskip -2pt\raise -2pt\hbox{$#2$}}
\author{Jie Liu} %
\address{Jie Liu, Institute of Mathematics, Academy of Mathematics and Systems Science, Chinese Academy of Sciences, Beijing, 100190, China}
\email{\href{jliu@amss.ac.cn}{jliu@amss.ac.cn}}
\urladdr{\href{http://www.jliumath.com}{http://www.jliumath.com}}
\keywords{$G$-variety, Fano manifold, tangent bundle, moment map, VMRT}
\subjclass[2020]{14M17,14M27,14J45}
\title{On moment map and bigness of tangent bundles of $G$-varieties}
\date{\today}
\DeclareMathOperator{\GL}{GL}
\DeclareMathOperator{\Rad}{Rad}
\DeclareMathOperator{\SL}{SL}
\DeclareMathOperator{\Eff}{\overline{Eff}}
\DeclareMathOperator{\Proj}{Proj}
\begin{document}
	
\begin{abstract}
	Let $G$ be a connected algebraic group and let $X$ be a smooth projective $G$-variety. In this paper, we prove a sufficient criterion to determine the bigness of the tangent bundle $TX$ using the moment map $\Phi_X^G:T^*X\rightarrow \fg^*$. As an application, the bigness of the tangent bundles of certain quasi-homogeneous varieties are verified, including symmetric varieties, horospherical varieties and equivariant compactifications of commutative linear algebraic groups. Finally, we study in details the Fano manifolds $X$ with Picard number $1$ which is an equivariant compactification of a vector group $\bG_a^n$. In particular, we will determine the pseudoeffective cone of $\bP(T^*X)$ and show that the image of the projectivised moment map along the boundary divisor $D$ of $X$ is projectively equivalent to the dual variety of the VMRT of $X$.
\end{abstract}

\maketitle
\tableofcontents

\section{Introduction}

Throughout we work over the field of complex numbers $\bC$.

Since the seminal works of Mori and Siu-Yau on the solutions to Hartshorne conjecture and Frankel conjecture \cite{Mori1979,SiuYau1980}, it becomes apparent that making an assumption about the positivity of the tangent bundle $TX$ of a projective manifold $X$, or equivalently the positivity of the tautological divisor $\Lambda$ of the projectivisation $\bP(T^*X)$ (in the geometric sense), allows us to derive a particularly rich geometry of $X$. While the situation where $TX$ is ample or nef are intensively studied in literatures (see \cite{Mori1979,CampanaPeternell1991,DemaillyPeternellSchneider1994,MunozOcchettaSolaCondeWatanabeEtAl2015} and the references therein), the case where $TX$ is big is much less understood yet. The main difficult to investigate the bigness of $TX$ in general case is the lack of numerical characterisations in terms of invariants of $X$ even in low dimension. As far as we know, there are three main tools which are used to prove or disprove the bigness of $TX$. The first one is the \emph{(projectivised) moment map}, i.e., the rational map defined by certain subspaces of $|\Lambda|$. The second one is the existence of \emph{twisted symmetric vector fields}, i.e., non-vanishing of $H^0(X,\Sym^m T_X\otimes \sO_X(-A))$ with $A$ being a big divisor, and the third one is to determine the cohomological class of the \emph{total dual VMRT} $\check{\cC}\subseteq \bP(T^*X)$. We give in the following an overview of a few of the varieties which have already been studied and also the method used to prove or disprove the bigness of their tangent bundles:

\begin{itemize}
	\item (projectivised) moment map 
	\begin{itemize}
		\item rational homogeneous spaces \cite{Richardson1974}
	\end{itemize}
	
	\item twisted symmetric vector fields
	\begin{itemize}
		\item toric varieties \cite{Hsiao2015}
		\item intersection of two quadrics in $\bP^4$ and cubic surfaces in $\bP^3$ \cite{Mallory2021}
		\item hypersurfaces in $\bP^n$ $(n\geq 3)$ \cite{HoeringLiuShao2020}
	\end{itemize}	 
	
	\item total dual VMRT
	\begin{itemize}
		\item del Pezzo surfaces and del Pezzo threefolds \cite{HoeringLiuShao2020}
		\item Fano manifolds of Picard number $1$ and with $0$-dimensional VMRT \cite{HoeringLiu2021}
		\item moduli spaces $\text{SU}_C(r,d)$ of stable vector bundles of rank $r$ and degree $d$ over a projective curve $C$ of genus $g$ such that $r\geq 3$, $g\geq 4$ and $(r,d)=1$ \cite{FuLiu2021}
		\item Fano threefolds with Picard number $2$ \cite{KimKimLee2022}
	\end{itemize} 
\end{itemize}

The main body of this paper will be devoted to pursue furthermore the criterion for the bigness of $TX$ via moment map. Let $G$ be a connected algebraic group with Lie algebra $\fg$ and let $X$ be a smooth projective $G$-variety. Then the \emph{moment map} $\Phi_X^G:T^*X\rightarrow \fg^*$ is defined as follows: for a point $x\in X$, the map $T^*_x X\rightarrow \fg^*$ is defined as the cotangent map of the orbit map $\mu_x: G\rightarrow Gx$ at $x$, see Section\,\ref{subsection:momentmap} for more details. We denote by $\cM_X^G\subseteq \fg^*$ the closure of the image of $\Phi_X^G$. The starting point of this paper is the following criterion for the bigness of $TX$, which is proved by combining the moment map with the approach via twisted symmetric vector fields.

\begin{prop}
	\label{p.Intro-criterion}
	Let $G$ be a connected algebraic group and let $X$ be a smooth projective $G$-variety. Then $TX$ is big if there exists an effective big divisor $A$ such that 
	\[
	\dim(\Phi_X^G(T^*X|_{\Supp(A)}))<\dim(\cM_X^G).
	\]
\end{prop}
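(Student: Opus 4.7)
The plan is to combine the moment map with the twisted-symmetric-vector-fields approach: produce a non-zero section of $\Sym^m TX \otimes \sO_X(-A)$ for some $m \geq 1$, and then deduce bigness of $\Lambda = \sO_{\bP(T^*X)}(1)$ from the $\pi$-ampleness of $\Lambda$ together with the bigness of $A$. As a preliminary reduction, write $A = \sum_i a_i A_i$ in prime components; then $A \leq (\max_i a_i)\,A_{\red}$, so $A_{\red}$ is big and has the same support as $A$. The hypothesis therefore persists with $A_{\red}$ in place of $A$, and I may assume $A$ is reduced.

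Next I use the moment map to construct the desired twisted section. Since $\Phi_X^G$ is linear on each fibre of $T^*X \to X$ (being the cotangent of the orbit map at the identity), a homogeneous polynomial $f \in \bC[\fg^*] = \Sym^\bullet \fg$ of degree $m$ pulls back to a function on $T^*X$ that is homogeneous of degree $m$ in the cotangent directions, i.e.\ a section $s_f \in H^0(X, \Sym^m TX)$. This yields a graded ring homomorphism $\bC[\fg^*] \to \bigoplus_{m \geq 0} H^0(X, \Sym^m TX)$ whose kernel is the homogeneous ideal of $\cM_X^G$, since $\Phi_X^G$ has dense image in $\cM_X^G$. Both $\cM_X^G$ and $Z := \overline{\Phi_X^G(T^*X|_{\Supp A})}$ are conic subvarieties of $\fg^*$ (stable under the scaling inherited from the cotangent fibres), so their vanishing ideals are homogeneous. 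The hypothesis $\dim Z < \dim \cM_X^G$ forces a strict inclusion of homogeneous ideals, so I can pick some $f \in \bC[\fg^*]_m$ vanishing on $Z$ but not on $\cM_X^G$. Its pullback $s := f \circ \Phi_X^G$ is a non-zero element of $H^0(X, \Sym^m TX)$ that vanishes pointwise on $\Supp A$, and since $A$ is reduced we obtain
\[
0 \neq s \in H^0(X, \Sym^m TX \otimes \sO_X(-A)).
\]

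Finally I convert this twisted section into bigness of $\Lambda$. Let $\pi : \bP(T^*X) \to X$ be the projection. By the projection formula, $s$ corresponds to a non-zero section of $m\Lambda - \pi^* A$, whose divisor $E$ is effective and satisfies $m\Lambda \equiv \pi^* A + E$ in $N^1(\bP(T^*X))$. Because $\Lambda$ is $\pi$-ample, I can fix an ample divisor $H$ on $X$ for which $\Lambda + \pi^* H$ is ample on $\bP(T^*X)$. Applying Kodaira's lemma to the big divisor $A$ yields an integer $k_0 > 0$ and an effective divisor $F$ on $X$ with $k_0 A \equiv H + F$. Combining,
\[
(k_0 m + 1)\, \Lambda \equiv (\Lambda + \pi^* H) + (k_0 E + \pi^* F),
\]
which writes a positive multiple of $\Lambda$ as the sum of an ample and an effective divisor on $\bP(T^*X)$; hence $\Lambda$, and therefore $TX$, is big.

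The conceptually non-routine step is the second one, namely converting the drop in dimension of the moment-map image along $T^*X|_{\Supp A}$ into an actual section of $\Sym^m TX$ vanishing along $A$; this requires the fibrewise linearity of $\Phi_X^G$ and the conic nature of its image. Once such a twisted symmetric vector field is in hand, the final paragraph is the standard consequence that a relatively ample line bundle together with the pullback of a big base divisor produces a big divisor.
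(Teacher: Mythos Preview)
Your proof is correct and follows essentially the same route as the paper: reduce to $A$ reduced, use the dimension drop and the conic (hence projective) nature of $\cM_X^G$ and $\cM_X^G(A)$ to find a homogeneous $f\in\Sym^m\fg$ vanishing on the latter but not the former, pull it back via $\Phi_X^G$ to obtain a nonzero section of $\Sym^m TX\otimes\sO_X(-A)$, and conclude bigness of $\Lambda$. The only cosmetic difference is that you spell out the final step (effective $m\Lambda-\pi^*A$ with $A$ big $\Rightarrow$ $\Lambda$ big) via Kodaira's lemma, whereas the paper packages this as Lemma~\ref{l.criterionbigness}.
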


We refer the reader to Section \ref{subsection:criterion} for discussion on how to verify the conditions in the criterion. As the first application of Proposition \ref{p.Intro-criterion}, the following theorem confirms the bigness of the tangent bundles of certain interesting smooth projective quasi-homogeneous varieties.

\begin{thm}
	\label{t.Examples-main-thm}
	Let $X$ be a projective manifold. Then $TX$ is big if $X$ is isomorphic to one of the following varieties:
	\begin{enumerate}
		\item a spherical $G$-variety with a $G$-stable affine open subset, e.g., symmetric varieties;
		
		\item a horospherical $G$-variety;
		
		\item\label{t.term-EC} a quasi-homogeneous $G$-variety with $G$ a commutative linear algebraic group.
	\end{enumerate}
\end{thm}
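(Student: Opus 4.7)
The plan is to apply Proposition~\ref{p.Intro-criterion} in each of the three cases, by producing an effective big divisor $A$ with $G$-stable support $D\subsetneq X$ and verifying $\dim\Phi_X^G(T^*X|_D)<\dim\cM_X^G$.

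For (1) and (3), the natural candidate is $D=X\setminus U$ for a $G$-stable affine open subset $U$ of $X$. In case (1) such a $U$ is supplied by the hypothesis. In case (3), commutativity of the linear algebraic group $G$ forces any quotient $G/H$ to be again commutative linear, hence affine, so the open orbit $U\simeq G/H$ is already affine. In both cases the smoothness and projectivity of $X$ guarantee that $D$ is pure of codimension one and supports an ample effective Cartier divisor. For (2), the strategy splits: when $X$ is $G$-homogeneous (a generalised flag manifold) the result is classical by Richardson; when $X$ is non-homogeneous horospherical one exploits the horospherical contraction $\pi:X\to G/P$, whose generic fibre is a projective toric embedding of the torus $P/H$. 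The union $D$ of $G$-stable prime boundary divisors of $X$ restricts on each fibre to the toric boundary, and a suitable positive $\bQ$-linear combination of $D$ with the pull-back $\pi^{\ast}\sO_{G/P}(1)$ is then big with $G$-stable support.

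For the dimension inequality in cases (1) and (2), the input is sphericality: $\dim\cM_X^G=2\dim X-r(X)$ by Knop's formula, where $r(X)$ denotes the rank. The key claim is that the image $\Phi_X^G(T^*X|_D)=\bigcup_{y\in D}\fg_y^{\perp}$ is a proper closed subvariety of $\cM_X^G$. This reduces to a statement about generic elements $\xi$ of $\cM_X^G$: Knop's theory of the cotangent bundle of a spherical variety shows that the generic $\Phi_X^G$-fibre, of dimension $r(X)$, projects entirely into the preimage of the open orbit $U$, hence misses $T^*X|_D$. In case (3) a sharper and more direct argument is available. Commutativity of $G$ makes $\Phi_X^G$ a $G$-invariant map and $\dim\cM_X^G=\dim\fg^*=\dim G=\dim X$. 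The boundary decomposes into finitely many $G$-stable prime divisors $D_1,\dots,D_s$ (finiteness follows from the connectedness of $G$); on each $D_i$ the generic Lie stabiliser $\fh_i\subsetneq\fg$ is positive-dimensional, so the cotangent image at a generic point of $D_i$ equals $\fh_i^{\perp}\subsetneq\fg^*$. Therefore
\[
\Phi_X^G(T^*X|_D) \;\subseteq\; \bigcup_{i=1}^{s}\fh_i^{\perp} \;\subsetneq\; \fg^* \;=\; \cM_X^G,
\]
a finite union of proper linear subspaces, of strictly smaller dimension than $\dim\fg^*$.

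The main expected obstacle is case (2): both constructing the required big $G$-stable divisor on a general horospherical variety and making the Knop-type genericity statement rigorous in this setting. Case (1) is then a direct application of the sphericality input combined with the affine-open hypothesis, and case (3) is essentially linear algebra once the finiteness of $G$-stable prime divisors is in place.
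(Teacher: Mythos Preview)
Your treatment of cases (1) and (3) is essentially correct and close to the paper's. Two minor points: in (1), the complement of an affine open set supports a \emph{big} divisor (Goodman), not necessarily an ample one, but bigness is all Proposition~\ref{p.Intro-criterion} needs; in (3), your equation $\dim\cM_X^G=\dim\fg^*=\dim G=\dim X$ silently assumes the generic stabiliser is trivial. For commutative $G$ this is harmless, since the kernel of the action acts trivially on the open orbit and hence on all of $X$, so one may replace $G$ by $G/H$ first---but you should say so. The paper instead argues directly that $\dim\cM_X^G=\dim X$ via $N_G(H)=G$ and Lemma~\ref{l.orbit-normaliser}, and bounds $\dim\cM_{D_i}^G\le\dim X-1$ by Lemma~\ref{l.normaliser-divisor}, never needing $\cM_X^G=\fg^*$.

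Case (2), however, contains a genuine gap. You invoke a ``horospherical contraction $\pi:X\to G/P$'' and use $\pi^*\sO_{G/P}(1)$ to build the big divisor, but no such morphism exists in general: the torus bundle $G/H\to G/P$ need not extend over the boundary. For instance, any non-homogeneous horospherical variety of Picard number $1$ (say the $G_2$-variety $X^5$ of Remark~\ref{r.Examples-Horospherical}) cannot admit a non-finite morphism to a lower-dimensional $G/P$, and the map cannot be finite for dimension reasons. So neither your big divisor nor the fibrewise toric picture is available.

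The paper circumvents this by changing the divisor entirely. Rather than seeking a $G$-stable big divisor, it takes $D$ to be the complement of the open \emph{$B$-orbit}: this is affine because $B$ is solvable (Lemma~\ref{l.criteria-bigness-boundary-divisor}), hence $D$ is big by Proposition~\ref{p.Goodman-affinity}. The price is that some components of $D$ are only $B$-stable, not $G$-stable. For $G$-stable components, the rank-drop argument $r(D_i)=r(X)-1$ (Proposition~\ref{p.G-spherical}) gives $\dim\cM_X^G(D_i)<\dim\cM_X^G$ via Lemma~\ref{l.Gstable}. For a $B$-stable but not $G$-stable prime divisor, its intersection with the open orbit is the preimage of a Schubert divisor $D'\subset G/P$; one then uses the factorisation $T^*(G/H)\to G\ast_P\fh^{\perp}\xrightarrow{\varphi}\fg^*$ of Lemma~\ref{l.orbit-normaliser}, together with the fact (from Theorem~\ref{t.KnopdimensionofMomentmap}) that $\varphi$ is generically finite onto $\cM_X^G$, to conclude that $\varphi(p^{-1}(D'))$ has strictly smaller dimension. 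This is the step your proposal is missing.
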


We refer the reader to Section \ref{subsection.proofmainthm} for the definitions of spherical varieties, symmetric varieties and horospherical varieties. Our initial motivation for the present work is trying to produce more examples of Fano manifolds of Picard number $1$ and with big tangent bundle, while the only previous known non-homogeneous examples, up to our knowledge, are the quintic del Pezzo threefold $V_3$ \cite{HoeringLiuShao2020} and the horospherical $G_2$-variety $X^5$ \cite{PasquierPerrin2010}. As the second application of Proposition \ref{p.Intro-criterion}, we derive infinitely many (non-homogeneous) examples of Fano manifolds of Picard number $1$ and with big tangent bundle, which are summarised in the following:

	\begin{itemize}
		\item rational homogeneous spaces $G/P$ with Picard number $1$ \cite{Richardson1974}. See Theorem \ref{t.KnopdimensionofMomentmap} and Example \ref{e.Examples-Spherical-Varieties}
		
		\item non-homogeneous projective symmetric varieties (and their degenerations) \cite{Ruzzi2010}
		\begin{itemize}
			\item the Cayley Grassmannian $CG$ \cite{Manivel2018}
			\item the double Cayley Grassmannian $DG$ \cite{Manivel2020a}
			\item a smooth hyperplane section of the third row of the \emph{geometric Freudenthal's magic square} $\Gr_{\omega}(\bA^3,\bA^6)$, where $\bA$ is a complex composition algebra (i.e., the complexification of $\bR$, $\bC$, the quaternions $\bH$, or the octonions $\bO$) \cite{LandsbergManivel2001}
		\end{itemize}
		See Corollary \ref{c.symmetric-Picard-number-one} and Remark \ref{r.Classification-Symmetric-varieties}.                                                        
		
		\item non-homogeneous smooth projective horospherical varieties \cite{Pasquier2009}
		\begin{itemize}
			\item $X^1(m)\coloneqq (B_m,\omega_{m-1},\omega_m)$ $(m\geq 3)$
			\item $X^2\coloneqq (B_3,\omega_1,\omega_3)$
			\item $X^3(m,i)\coloneqq (C_m,\omega_i,\omega_{i+1})$ $(m\geq 2, 1\leq i\leq m-1)$
			\item $X^4\coloneqq (F_4,\omega_2,\omega_3)$
			\item $X^5\coloneqq (G_2,\omega_2,\omega_1)$
		\end{itemize}  
	    The varieties $X^3(m,i)$ are the odd symplectic Grassmannians and we refer the reader to \cite{Pasquier2009} for the notations. See Proposition \ref{p.horospherical} and Remark \ref{r.Examples-Horospherical}.
		
		\item a smooth linear section $V_k$ of $\Gr(2,5)\subseteq \bP^9$ with codimension $k\leq 3$ in its P\"ucker embedding \cite{HoeringLiuShao2020,HoeringLiu2021}. The variety $V_1$ is isomorphic to the horospherical variety $X^3(2,1)$. See Proposition \ref{p.commutative-groups} and Example \ref{e.Examples-EC-Picard-number-one}.
		
		\item a smooth linear section $S_k$ of the $10$-dimensional spinor variety $\bS_5\subseteq \bP^{15}$ with codimension $k\leq 3$ in its minimal embedding. The variety $S_1$ is the horospherical variety $X^2$. See Proposition \ref{p.commutative-groups}, Exmaple \ref{e.Examples-EC-Picard-number-one} and Corollary \ref{c.linear-section-S10}.
		
		\item the smooth projective two-orbits $F_4$-variety $\textbf{X}_1$ given in \cite[Definition 2.11]{Pasquier2009}. Note that the smooth projective two-orbits $G_2\times \text{PGL}_2$-variety $\textbf{X}_2$ is isomorphic to the general codimension $2$ linear section $S_2^g$ of $\bS_5$. See \cite[Definition 2.12]{Pasquier2009}, \cite[Proposition 4.8]{BaiFuManivel2020} and Proposition \ref{p.X1}.
	\end{itemize}

An interesting class of examples belonging to the case \ref{t.term-EC} of Theorem \ref{t.Examples-main-thm} is the equivariant compactifications of vector groups. Recall that an \emph{equivariant compactification} of an algebraic group $G$ is a pair $(X,x)$, where $X$ is a normal complete algebraic variety equipped with a regular action $G\times X\rightarrow X$ and $x\in X$ is a point with the trivial stabilizer such that the orbit $Gx$ is open and dense in $X$. We find it quite remarkable that the moment map of a Fano manifold $X$ with Picard number $1$ which is an equivariant compactification of a vector group $\bG_a^n$ exhibits many interesting geometric properties and it has a surprising connection with the VMRT of $X$. In particular, in this situation, as the last application of Proposition \ref{p.Intro-criterion}, we have a complete description of the pseudoeffective cone of $\bP(T^*X)$ and we can relate the criterion given in Proposition \ref{p.Intro-criterion} to the criterion given by total dual VMRT in \cite{HoeringLiuShao2020,FuLiu2021}, see also Section \ref{subsection.VMRTdual}.

\begin{thm}
	\label{t.EC-main-thm}
	Let $X$ be a Fano manifold with Picard number $1$, different from projective spaces, which is an equivariant compactification of the vector group $G=\bG_a^n$ with an open orbit $O\subseteq X$. Denote by $D$ the complementary $X\setminus O$. Then the following statements hold.
	\begin{enumerate}
		\item The pseudoeffective cone $\Eff(\bP(T^*X))$ is generated by $\pi^*D$ and all the prime divisors $D_{\cH}$ (cf. Notation \ref{n.Horizontal-part}), where $\pi:\bP(T^*X)\rightarrow X$ is the natural projection and $\cH$ is a reduced and irreducible hypersurface in $\bP(\fg^*)$ containing $\bP(\cM_D^{G})$.
		
		\item\label{item.EC-main-thm} If the VMRT $\cC_x\subseteq \bP(T_x X)$ at a point $x\in O$ is smooth, then $\bP(\cM_D^{G})\subseteq \bP(\fg^*)$ is projectively equivalent to the dual variety of $\cC_x\subseteq \bP(T_x X)$.
		
		\item If the VMRT $\cC_x\subseteq \bP(T_x X)$ at a point $x\in O$ is smooth and not dual defective, then we have
		\begin{center}
			$D_{\cH}=\check{\cC}$,\quad $\Eff(\bP(T^*X))=\langle D_{\cH}, \pi^*D\rangle$\quad and\quad $D_{\cH}\sim a\Lambda - 2\pi^*D$
		\end{center}
		where $\cH=\bP(\cM_D^{G})\subseteq \bP(\fg^*)$, the variety $\check{\cC}\subseteq \bP(T^*X)$ is the total dual VMRT, the coefficient $a$ is the codegree of the VMRT and $\Lambda$ is the tautological divisor of $\bP(T^*X)$.
	\end{enumerate}
\end{thm}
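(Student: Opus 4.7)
Since $X$ has Picard number one, the Néron--Severi space $N^1(\bP(T^*X))_{\bR}$ is two-dimensional, spanned by $\Lambda$ and $\pi^*D$, so $\Eff(\bP(T^*X))$ is a two-dimensional closed convex cone with exactly two extremal rays. The plan is to identify these rays in (1) and then combine them with the moment-map geometry to deduce (2) and (3).

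For (1), one extremal ray is generated by $\pi^*D$: it is effective, and a fibre line $f$ of $\pi$ satisfies $f\cdot\pi^*D = 0$ while $f\cdot\Lambda = 1$. For the opposite ray, given a reduced irreducible hypersurface $\cH = \{p_{\cH}=0\}\subset \bP(\fg^*)$ of degree $d$ containing $\bP(\cM_D^G)$, substituting a basis of $\fg$-vector fields $\xi_1,\dots,\xi_n\in H^0(X,TX)$ into $p_{\cH}$ yields a global section of $\Sym^d TX \cong H^0(\bP(T^*X), d\Lambda)$ which vanishes on $\pi^{-1}(D)$ precisely because $\cH\supseteq \bP(\cM_D^G)$. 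Writing its zero divisor as $D_\cH + m(\cH)\pi^*D$ with $m(\cH)\geq 1$, one obtains $D_\cH\sim d\Lambda - m(\cH)\pi^*D$ effective. To see these generate the other extremal ray, note that since the orbit map $G\to O$ is an isomorphism, the moment map trivialises $T^*O\cong \fg^*\times O$, hence $\bP(T^*O)\cong \bP(\fg^*)\times O$; as $\Pic(\bP^{n-1}\times O)=\bZ$, any horizontal prime divisor $E\ne\pi^*D$ is cut out on $\bP(T^*O)$ by a polynomial homogeneous of some degree in the $\fg^*$-coordinates, determining a hypersurface $\cH_E$ such that $E\sim D_{\cH_E} + k\pi^*D$ for some $k\in\bZ$. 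This places $[E]$ in the cone spanned by $D_{\cH_E}$ and $\pi^*D$.

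For (2), fix $x\in O$; the orbit map $\mu_x : G\xrightarrow{\sim} Gx = O$ induces $\Phi_x : T_x^*X\xrightarrow{\sim}\fg^*$, identifying $\bP(T_x^*X)\cong\bP(\fg^*)$. For $y\in D$, the image $\Phi_y(T_y^*X)$ is the annihilator $\fg_y^{\perp}$ of the isotropy Lie algebra $\fg_y = \Lie(G_y)$, so $\bP(\cM_D^G)$ is the closure of $\bigcup_{y\in D}\bP(\fg_y^{\perp})$. On the other hand, a minimal rational curve $C$ through $x$ has a well-defined tangent direction $[v_C]\in\cC_x\subset\bP(\fg)$ and extends to meet $D$ at a unique asymptotic point $y_C$, whose isotropy algebra $\fg_{y_C}$ records the infinitesimal geometry of $\cC_x$ near $[v_C]$. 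Unwinding this correspondence identifies the family $\{\bP(\fg_y^{\perp})\}_{y\in D}$ with the family of embedded projective tangent hyperplanes to $\cC_x$, so the union is precisely $\check{\cC}_x$.

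Under the hypotheses of (3), $\check{\cC}_x$ is an irreducible hypersurface of degree $a$ (the codegree of $\cC_x$), hence is the unique irreducible hypersurface containing $\bP(\cM_D^G)=\check{\cC}_x$. By (1), $\Eff(\bP(T^*X))=\langle D_\cH, \pi^*D\rangle$, and on $\bP(T^*O)$ the divisor $D_\cH$ matches $\check{\cC}|_O$ under the trivialisation $\bP(T^*O)\cong\bP(\fg^*)\times O$, so its closure in $\bP(T^*X)$ is the total dual VMRT $\check{\cC}$, whose class thus has the form $a\Lambda - m\pi^*D$. The main obstacle of the proof lies in pinning down $m=2$: one must show that the defining polynomial of $\check{\cC}_x$, evaluated on the $\fg$-vector fields, vanishes to order exactly two along $\pi^{-1}(D)$. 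Geometrically this second-order tangency reflects that $\bP(\cM_D^G)=\check{\cC}_x$ arises as a \emph{dual} variety---an envelope of tangent hyperplanes to $\cC_x$---rather than merely an incidental subvariety, and verifying it requires a careful local computation at a smooth point of $D$.
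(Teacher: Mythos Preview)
Your plan for (1) has a genuine gap. You assert that any horizontal prime divisor $E$ on $\bP(T^*X)$ restricts on $\bP(T^*O)\cong\bP(\fg^*)\times O$ to a divisor ``cut out by a polynomial homogeneous in the $\fg^*$-coordinates,'' hence determining a hypersurface $\cH_E\subset\bP(\fg^*)$. This is false in general: a prime divisor on $\bP^{n-1}\times\bA^n$ is cut out by an irreducible polynomial that may depend nontrivially on the $\bA^n$-coordinates, and it defines a hypersurface in $\bP(\fg^*)$ only when $E$ is constant along $O$, i.e.\ when $E$ is $G$-stable. The paper fills this gap by first invoking Brion's theorem for connected solvable groups, which says $\Eff(\bP(T^*X))$ is generated by $G$-stable effective divisors; then, because $G=\bG_a^n$ is commutative, a $G$-stable horizontal divisor genuinely restricts to a pullback from $\bP(\fg^*)$ and hence equals some $D_\cH$ (this is the content of Lemma~\ref{l.reconstruction-G-stable} and Proposition~\ref{p.Pseff-cone-EC}). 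Without this reduction step you have not shown that an arbitrary effective class lies in the cone you describe.

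Your sketches of (2) and (3) are on the right track but omit the mechanism that makes them work. For (2), the identification of the isotropy algebra $\fg_y$ (for $y\in D$ general) with the affine tangent space of $\cC_o$ at the corresponding point is not a formality: the paper obtains it by first producing an equivariant birational model $X\dashrightarrow\Bl_S(\bP^n)$ with $S\subset\bP^{n-1}$ the VMRT (Proposition~\ref{p.boundary-divisor-EC}), which pins down the $G$-orbit structure on $D$, and then using the splitting $f^*TX\cong\sO(2)\oplus\sO(1)^p\oplus\sO^{n-p-1}$ along a standard curve $l$ to show that the evaluation $\fg\to T_zX$ at $z=l\cap D$ vanishes precisely on the subspace $\fg_l\subset\fg$ corresponding to $T_o^+l$; hence $\Phi_X^G(T_z^*X)=\fg_l^\perp$ is exactly the set of hyperplanes tangent to $\cC_o$ at $[T_ol]$. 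This same local picture then yields the multiplicity in (3): in coordinates adapted to the splitting, $\Psi^*$ sends $(v_0,\dots,v_{n-1})$ to $(t^2v_0,tv_1,\dots,tv_p,v_{p+1},\dots,v_{n-1})$, and biduality forces the linear part of the equation of $\cH=\check\cC_o$ at a general image point to involve only $v_0$, so the pullback vanishes to order exactly $2$ along $t=0$.
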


\begin{rem}
	\begin{enumerate}
		\item For projective spaces, there exist non-isomorphic equivariant compactifications structures of vector groups and they are classified by the so-called \emph{Hasset-Tschinkel correspondence} proved in \cite{HassettTschinkel1999} (see also \cite{FuHwang2014}). In particular, Theorem \ref{t.EC-main-thm} above holds for the simplest equivariant compactification structure on projective spaces (see Example \ref{e.Examples-EC-structure}), however the statement \ref{item.EC-main-thm} is no longer true for others.
		
		\item For the known examples of equivariant compactifications of vector groups with Picard number $1$ (cf. Example \ref{e.Examples-EC-Picard-number-one}), we will determine in Table \ref{table.codegree-EC} the dual defect and the codegree of their VMRT, i.e., the value of $a$.
\end{enumerate}
\end{rem}

\subsection*{Acknowledgements} 

I would like to thank Baohua Fu for his stimulating discussion and useful comments during this project. In particular, the proof of Proposition \ref{p.boundary-divisor-EC} is communicated to me by him.
This work is supported by the National Key R\&D Program of China (No. 2021YFA1002300) and the NSFC grants (No. 11688101 and No. 12001521).

\section{Notation, conventions, and facts used}

Let $X$ be a projective manifold. Denote by $N^1(X)_{\mathbb{R}}$ the finite-dimensional $\mathbb{R}$-vector space of numerical equivalence classes of $\mathbb{R}$-divisors. The \emph{pseudoeffective cone} $\Eff(X)\subseteq N^1(X)_{\bR}$ is the closure of the convex cone spanned by the classes of effective $\bR$-divisors. It is known the interior of $\Eff(X)$ is the \emph{big cone} $\text{Big(X)}$ of $X$; that is, the open cone generated by big divisors on $X$.

\subsection{Positivity of vector bundles}

Let $E$ be a vector bundle over a smooth projective variety $X$. We denote by $\bP(E)$ the natural (not the Grothendieck) projectivisation of $E$; that is, we have
\[
\bP(E)\coloneqq \Proj\left(\bigoplus_{m\geq 0} \text{Sym}^m E^*\right),
\]
where $E^*$ is the dual bundle of $E$. 

\begin{defn}
	Let $E$ be a vector bundle over a projective manifold $X$. We say that $E$ is big (resp. ample, nef, pseudoeffective) if the tautological line bundle $\sO_{\bP(E^*)}(1)$ of the projective bundle $\bP(E^*)$ is big (resp. ample, nef, pseudoeffective).
\end{defn}

\begin{example}
	Let $E\cong \sO_{\bP^1}(a_1)\oplus \dots \oplus \sO_{\bP^1}(a_r)$ be a vector bundle of rank $r$ over $\bP^1$ such that $a_1\geq \dots \geq a_r$. Then we have
	\[
		E\ \text{is}
		\begin{cases}
			\text{ample}
			    & \text{if and only if}\ a_r>0; \\
			\text{nef} 
			    & \text{if and only if}\ a_r\geq 0; \\
			\text{big}
			    & \text{if and only if}\ a_1>0; \\
			\text{pseudoeffective}
			    & \text{if and only if}\ a_1\geq 0.			
		\end{cases}
	\]
\end{example}

We have the following simple but useful criterion for bigness of vector bundles:

\begin{lem}[\protect{Equivalent definitions of bigness, \cite[Lemma 2.3]{HoeringLiuShao2020}}]
	\label{l.criterionbigness}
	Let $E$ be a vector bundle over a projective manifold $X$. Denote by $\pi:\bP(E^*)\rightarrow X$ the natural projection and by $\Lambda$ the tautological divisor class of $\bP(E^*)$. Then the following statements are equivalent.
	\begin{enumerate}
		\item \label{l.criterionbigness(1)} The vector bundle $E$ is  big.
		
		\item \label{l.criterionbigness(2)} There exist a big divisor $A$ on $X$ and a positive integer $m$ such that $m\Lambda - \pi^*A$ is big.
		
		\item \label{l.criterionbigness(3)} There exist a big divisor $A$ on $X$ and a positive integer $m$ such that $m\Lambda - \pi^*A$ is pseudoeffective.
	\end{enumerate}
\end{lem}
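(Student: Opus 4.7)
The plan is to prove the cyclic chain of implications $(1)\Rightarrow(2)\Rightarrow(3)\Rightarrow(1)$. The middle implication $(2)\Rightarrow(3)$ is immediate, since a big divisor is by definition pseudoeffective. The real content therefore lies in the two outer implications, and I expect the main obstacle to be $(3)\Rightarrow(1)$, where a mere pseudoeffectivity statement on $\bP(E^*)$ must be upgraded to bigness of $\Lambda$.

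For $(1)\Rightarrow(2)$, I would fix any ample divisor $A$ on $X$ (which is in particular big). Since $\Lambda$ is big on $\bP(E^*)$ by hypothesis, Kodaira's lemma supplies a numerical decomposition $\Lambda\equiv_{\bQ} H+F$ with $H$ an ample $\bQ$-divisor and $F$ an effective $\bQ$-divisor on $\bP(E^*)$. For $m$ sufficiently large the class $mH-\pi^*A$ remains ample (the ample cone is open in $N^1(\bP(E^*))_{\bR}$ and $\pi^*A$ is a fixed perturbation), so $m\Lambda-\pi^*A\equiv(mH-\pi^*A)+mF$ is the sum of an ample and an effective class, hence big.

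For $(3)\Rightarrow(1)$, the key observation is that $\Lambda$ is $\pi$-ample; consequently, for any ample divisor $H$ on $X$, the class $\Lambda+k\pi^*H$ is ample on $\bP(E^*)$ once $k$ is large. Starting from the hypothesis, apply Kodaira's lemma on $X$ to write the big divisor $A$ as $A\equiv_{\bQ} H+F'$ with $H$ ample and $F'$ effective on $X$; then
\[
m\Lambda-\pi^*H \;=\; (m\Lambda-\pi^*A)+\pi^*F'
\]
is pseudoeffective, being the sum of a pseudoeffective and an effective class. Choosing $k$ so that $\Lambda+k\pi^*H$ is ample and forming the positive combination
\[
k(m\Lambda-\pi^*H)\;+\;(\Lambda+k\pi^*H)\;\equiv\;(km+1)\Lambda,
\]
I exhibit $(km+1)\Lambda$ as a pseudoeffective class plus an ample class, which lies in the interior of the pseudoeffective cone and is therefore big. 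Scaling, $\Lambda$ itself is big, which by definition means $E$ is big.

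The only technical inputs are the relative ampleness of $\Lambda$ and the standard fact that the sum of a pseudoeffective and an ample class is big (equivalently, that the big cone is the interior of the pseudoeffective cone). Both are classical, so I do not anticipate any serious obstacle beyond arranging the correct positive combinations to cancel the $\pi^*H$ contribution in the final step.
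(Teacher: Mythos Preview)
Your proof is correct and follows the same cyclic structure as the paper, which simply invokes ``openness of bigness'' for $(1)\Rightarrow(2)$ and cites \cite[Lemma~2.3]{HoeringLiuShao2020} for $(3)\Rightarrow(1)$. You have supplied exactly the details that the paper defers to the citation, using Kodaira's lemma and the $\pi$-ampleness of $\Lambda$ in the standard way.
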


\begin{proof}
	The implication \ref{l.criterionbigness(1)}$\implies$\ref{l.criterionbigness(2)} follows from the openness of bigness, and the implication \ref{l.criterionbigness(2)}$\implies$\ref{l.criterionbigness(3)} is trivial. Finally the implication \ref{l.criterionbigness(3)}$\implies$\ref{l.criterionbigness(1)} follows from \cite[Lemma 2.3]{HoeringLiuShao2020}.
\end{proof}

\begin{lem}
	\label{l.subbundle-big}
	Let $F$ be a subbundle of $E$. If $F$ is big, then $E$ is big.
\end{lem}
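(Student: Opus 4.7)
The plan is to use Lemma~\ref{l.criterionbigness} in both directions: bigness of $F$ produces a non-zero global section of an appropriate twist $\Sym^{km} F \otimes \sO_X(-kA)$ with $A$ big on $X$, and bigness of $E$ can be established from a non-zero section of the analogous twist with $F$ replaced by $E$. The subbundle inclusion $F \subseteq E$ is exactly what is needed to transport the former section to the latter.

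First I would apply Lemma~\ref{l.criterionbigness} to $F$ to obtain a big divisor $A$ on $X$ and a positive integer $m$ such that $m\Lambda_F - \pi_F^*A$ is big on $\bP(F^*)$, where $\pi_F : \bP(F^*) \to X$ is the natural projection and $\Lambda_F$ is the tautological divisor class. Since a big line bundle admits a non-zero section after passing to some multiple, there exists $k > 0$ with $H^0(\bP(F^*), km\Lambda_F - k\pi_F^*A) \neq 0$, which under the identification $H^0(\bP(F^*), \sO_{\bP(F^*)}(km)) = H^0(X, \Sym^{km} F)$ yields a non-zero element of $H^0(X, \Sym^{km} F \otimes \sO_X(-kA))$. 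Because $F \subseteq E$ is a subbundle, symmetric powers preserve the inclusion, giving $\Sym^{km} F \hookrightarrow \Sym^{km} E$ as subsheaves; tensoring with $\sO_X(-kA)$ and taking global sections preserves injectivity, producing a non-zero section of $\Sym^{km} E \otimes \sO_X(-kA)$. Translating back via the Proj construction, $km\Lambda_E - k\pi_E^*A$ is effective and in particular pseudoeffective on $\bP(E^*)$, so the implication \ref{l.criterionbigness(3)}$\Rightarrow$\ref{l.criterionbigness(1)} of Lemma~\ref{l.criterionbigness} applied with the big divisor $kA$ and the integer $km$ delivers bigness of $E$.

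There is no real technical obstacle in this plan. The only subtle point worth flagging is that the naive bound $h^0(X, \Sym^{km} E) \geq h^0(X, \Sym^{km} F)$ only grows like $(km)^{\dim X + \rank F - 1}$, and by itself is insufficient to detect bigness of $E$ when $\rank F < \rank E$, as one would need growth of order $(km)^{\dim X + \rank E - 1}$. What makes the argument work is precisely that Lemma~\ref{l.criterionbigness} reformulates bigness of a vector bundle in terms of the existence of a \emph{single} non-zero twisted section on the base, a condition that transports effortlessly along any subbundle inclusion.
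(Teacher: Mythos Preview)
Your argument is correct and follows essentially the same route as the paper's own proof: both invoke Lemma~\ref{l.criterionbigness} to produce a non-zero section of $\Sym^{km} F\otimes\sO_X(-kA)$ for some big $A$, push it into $\Sym^{km} E\otimes\sO_X(-kA)$ via the subbundle inclusion, and conclude by the reverse implication of Lemma~\ref{l.criterionbigness}. Your closing remark about why the naive growth estimate is insufficient is a helpful addition but not needed for the proof itself.
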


\begin{proof}
	By Lemma \ref{l.criterionbigness}, there exist a big divisor $A$ on $X$ and a positive integer $m$ such that $m\Lambda_F-\pi_F^* A$ is big, where $\Lambda_F$ is the tautological divisor of $\bP(F^*)$ and $\pi_F:\bP(F^*)\rightarrow X$ is the natural projective. Then, after replacing $m$ by its large enough multiple $m'm$ and replacing $A$ by $m'A$, we may assume that $|m\Lambda_F-\pi_F^*A|$ is non-empty. In particular, we have $H^0(X,\Sym^m F\otimes \sO_X(-A))\not=\emptyset$. This implies that $H^0(X,\Sym^m E\otimes \sO_X(-A))\not=\emptyset$. In other words, we have $|m\Lambda_E-\pi_E^*A|\not=\emptyset$ and hence $E$ is big by Lemma \ref{l.criterionbigness}, where $\Lambda_E$ is the tautological divisor of $\bP(E^*)$ and $\pi_E:\bP(E^*)\rightarrow X$ is the natural projection.
\end{proof}

\begin{example}
	Let $X$ be a projective manifold and let $L$ be a big line bundle over $X$. Set $E=L\oplus L^*$. Then the tangent bundle of $\bP(E^*)$ is big. Indeed we note that $E$ is big by Lemma \ref{l.subbundle-big} as $L$ is big . In particular, the relative tangent bundle of $\bP(E^*)\rightarrow X$ is big as it is isomorphic to the line bundle $\sO_{\bP(E^*)}(2)$. Consequently, the tangent bundle $T{\bP(E^*)}$ itself is big by Lemma \ref{l.subbundle-big}. 
\end{example}

\subsection{Pseudoeffective cone of divisors on $G$-varieties}

Let $D$ be a $\bQ$-Weil divisor on a normal projective variety $X$ and let $\bar{D}$ be an irreducible component of $\supp(D)$. We denote by $m_{\bar{D}}(D)$ the multiplicity of $D$ along $\bar{D}$, i.e., the coefficient of $\bar{D}$ in $D$.

\begin{lem}
	\label{l.coefficient-big-divisors}
	Let $X$ be a projective manifold. Let $D$ and $D'$ be two effective $\bQ$-Weil divisors on $X$ such that $\supp(D)=\supp(D')$. Then $D$ is big if and only if $D'$ is big.
\end{lem}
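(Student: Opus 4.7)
The plan is to exploit the fact that bigness of a $\bQ$-divisor is preserved under adding an effective divisor, combined with the observation that two effective $\bQ$-divisors with the same support dominate each other up to a positive rational multiple.

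First I would write $D = \sum_{i=1}^r a_i D_i$ and $D' = \sum_{i=1}^r b_i D_i$ where $D_1,\ldots,D_r$ are the distinct prime components of the common support $\supp(D)=\supp(D')$, so that $a_i>0$ and $b_i>0$ for every $i$. Setting $\lambda \coloneqq \max_i (b_i/a_i)$, the $\bQ$-divisor
\[
\lambda D - D' \;=\; \sum_{i=1}^r (\lambda a_i - b_i) D_i
\]
is effective by construction, and symmetrically one finds $\mu>0$ with $\mu D' - D$ effective.

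Next I would invoke the standard fact that the sum of an effective $\bQ$-divisor and a big $\bQ$-divisor is big (equivalently, big $\bQ$-divisors form the interior of the pseudoeffective cone, so they are preserved under translation by effective classes). If $D'$ is big, then
\[
\lambda D \;=\; D' + (\lambda D - D')
\]
is big, hence $D$ itself is big; the converse follows identically using $\mu D' = D + (\mu D' - D)$.

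There is essentially no obstacle here: the only delicate point is the justification that \emph{big $+$ effective $=$ big} for $\bQ$-divisors, which is immediate from Kodaira's lemma (write a big divisor as ample plus effective, and note that adding a further effective divisor preserves this decomposition). Hence the lemma is established in a few lines without any geometric input.
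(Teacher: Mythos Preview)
Your proof is correct and follows essentially the same approach as the paper: both arguments scale one divisor so that it dominates the other componentwise, then use that big plus effective is big. The only cosmetic difference is that the paper takes an integer multiple using $\max_i m_{D_i}(D)$ and $\min_i m_{D_i}(D')$ rather than your ratio $\lambda=\max_i(b_i/a_i)$.
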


\begin{proof}
	By symmetry, it suffices to prove that if $D$ is big, then $D'$ is big. Let us denote by $D_1,\dots,D_r$ the irreducible components of $\supp(D)=\supp(D')$. We define
	\begin{center}
		$m\coloneqq \max\{m_{D_i}(D)\,|\,i=1,\dots,r\}$ and $m'\coloneqq \min\{m_{D_i}(D')\,|\, i=1,\dots,r\}$.
	\end{center}
    Let $n$ be a positive integer such that $nm'\geq m$. Then $nD'-D$ is effective. In particular, it follows that $nD'$ is big and hence so is $D'$ itself.
\end{proof}

In general, the pseudoeffective cone of a projective manifold may be very complicated to describe. However, if $X$ admits a $G$-action for some solvable linear algebraic group $G$, then we have the following very useful result concerning $\Eff(X)$.

\begin{thm}[\protect{\cite[Th{\'e}or{\`e}me~1.3]{Brion1993}}]
	\label{t.pseffconeGvariety}
	Let $G$ be a connected solvable linear algebraic group and let $X$ be a smooth projective $G$-variety. Then every effective cycle on $X$ is rationally equivalent to a $G$-stable effective cycle. In particular, the pseudoeffective cone $\Eff(X)$ of $X$ is generated by $G$-stable divisors.
\end{thm}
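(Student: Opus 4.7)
The plan is to deduce the statement from Borel's fixed point theorem applied to the closure of the $G$-orbit of a cycle inside an appropriate parameter space.

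Given an effective cycle $Z$ on $X$, first fix a connected component $\sC$ of the Chow variety of $X$ containing $[Z]$; this is a projective variety. The $G$-action on $X$ induces a regular action on $\sC$, and the closure $V \coloneqq \overline{G \cdot [Z]} \subseteq \sC$ is a $G$-invariant projective subvariety. Since $G$ is connected and solvable, Borel's fixed point theorem produces a $G$-fixed point $[Z'] \in V$; the corresponding cycle $Z'$ is then $G$-stable by construction, so the main task is reduced to showing that $Z \sim_{\mathrm{rat}} Z'$.

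The key observation for the rational equivalence is that every connected solvable linear algebraic group is a rational variety, since it admits a composition series whose successive quotients are isomorphic to $\bG_a$ or $\bG_m$. Hence the orbit $G \cdot [Z]$, being a homogeneous space $G/H$ for some closed subgroup $H \subseteq G$, is the image of a rational variety under a surjective morphism, so $V$ is unirational and in particular rationally chain-connected. One can therefore link $[Z]$ to $[Z']$ by a chain of rational curves in $V$. Pulling back the universal family over $\sC$ along each such $\bP^1 \to V$ yields a family of effective cycles on $X$ parametrized by $\bP^1$; the two fibers over $0$ and $\infty$ are rationally equivalent as cycles on $X$, and concatenating these equivalences gives $Z \sim_{\mathrm{rat}} Z'$.

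The main obstacle is this last step: one must handle the universal cycle family over $\sC$ and its pullbacks to rational curves carefully, possibly passing to a $G$-equivariant resolution of $V$. Fortunately, for the codimension-one case — which is all that is needed for the corollary on $\Eff(X)$ — one can bypass these subtleties. Applied directly to the projective space $|D|$ of an effective divisor $D$ with its natural $G$-action, Borel's fixed point theorem immediately produces a $G$-fixed point $D' \in |D|$, i.e.\ a $G$-stable effective divisor with $D \sim_{\mathrm{lin}} D'$. Since linear equivalence implies numerical equivalence, this shows that every effective divisor class lies in the cone spanned by classes of $G$-stable effective divisors, hence so does every pseudoeffective class by taking closures.
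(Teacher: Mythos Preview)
The paper does not supply a proof of this theorem; it is quoted from Brion's 1993 paper and used as a black box. So there is no in-paper argument to compare against.

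Your sketch is essentially the standard route to this result and is sound. A couple of remarks. For the general-cycle statement, rather than invoking unirationality and rational chain-connectedness of the orbit closure, one usually argues more directly using the structure of $G$: write $G$ as an iterated extension by copies of $\bG_a$ and $\bG_m$, and for each one-parameter subgroup $\lambda$ let the cycle flow to its limit $\lim_{t\to 0}\lambda(t)\cdot Z$; this specialization is rationally equivalent to $Z$ (the family over $\bA^1$ compactifies to $\bP^1$) and is fixed by $\lambda$, so iterating produces a $G$-fixed limit. This avoids the Chow-variety bookkeeping you flagged.

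For the divisor case, your shortcut via $|D|$ is correct, but one should note why $G$ acts algebraically on $|D|$: since $G$ is connected linear and $\Pic^0(X)$ is an abelian variety, the map $g\mapsto [g\cdot D - D]\in \Pic^0(X)$ is constant, so $g\cdot D\sim D$ for all $g$, and $|D|$ is a $G$-stable component of the relevant Chow or Hilbert scheme. With that in hand, Borel's fixed point theorem gives the $G$-stable divisor, and your conclusion about $\Eff(X)$ follows.
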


As an immediate application of the theorem above, one can easily derive the following criterion for bigness of $G$-equivariant vector bundles $E$ over smooth projective $G$-varieties $X$. We note that the $G$-action on $E$ induces a $G$-action on $\bP(E^*)$ such that the natural projection $\bP(E^*)\rightarrow X$ is $G$-equivariant. 

\begin{prop}[\protect{Criterion for bigness of $G$-equivariant vector bundles}]
	Let $G$ be a connected solvable linear algebraic group. Let $E$ be a $G$-equivariant vector bundle over a smooth projective $G$-variety $X$. Denote by $\pi:\bP(E^*)\rightarrow X$ the natural projection and by $\Lambda$ the tautological divisor class of $\bP(E^*)$. Then $E$ is big if and only if there exist $G$-stable effective integral divisors $\Delta$ on $\bP(E^*)$ and $D$ on $X$ satisfying:
	\begin{enumerate}
		\item there exists a positive integer $m>0$ such that $\Delta\in |m\Lambda|$;
		
		\item the divisor $D$ is big and $\Delta-\pi^*D\geq 0$.
	\end{enumerate}
\end{prop}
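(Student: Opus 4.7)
The plan is to derive both directions from Lemma \ref{l.criterionbigness} together with Theorem \ref{t.pseffconeGvariety}. The sufficiency ($\Leftarrow$) is essentially immediate: given $\Delta \in |m\Lambda|$ and $D$ as in the statement, we have $m\Lambda - \pi^*D \sim \Delta - \pi^*D \geq 0$, so $m\Lambda - \pi^*D$ is effective, hence pseudoeffective, with $D$ big. The implication \ref{l.criterionbigness(3)}$\Rightarrow$\ref{l.criterionbigness(1)} of Lemma \ref{l.criterionbigness} then yields that $E$ is big. The $G$-stability hypothesis plays no role in this direction.

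For the necessity ($\Rightarrow$), I first observe that $\bP(E^*)$ is a smooth projective $G$-variety with $\pi$ equivariant, the $G$-action on $\bP(E^*)$ being induced by the $G$-equivariance of $E$; in particular, Theorem \ref{t.pseffconeGvariety} applies to both $X$ and $\bP(E^*)$. Assuming $E$ big, Lemma \ref{l.criterionbigness} supplies a big divisor $A$ on $X$ and an integer $m \geq 1$ such that $m\Lambda - \pi^*A$ is big on $\bP(E^*)$. Replacing $(m,A)$ by $(km,kA)$ for sufficiently divisible $k \gg 0$ and then choosing effective representatives in the appropriate linear equivalence classes, we may assume that $A$ is an effective integral divisor on $X$ and that there exists an effective integral divisor $\Delta'$ on $\bP(E^*)$ with $\Delta' \sim m\Lambda - \pi^*A$.

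It remains to upgrade $A$ and $\Delta'$ to $G$-stable divisors without losing the linear systems we need. Applying Theorem \ref{t.pseffconeGvariety} on $X$, the effective cycle $A$ is linearly equivalent to a $G$-stable effective integral divisor $D$, which remains big since bigness is a numerical property. Applying Theorem \ref{t.pseffconeGvariety} on $\bP(E^*)$, the effective cycle $\Delta'$ is linearly equivalent to a $G$-stable effective integral divisor $\Delta_0$. Since $\Delta_0 \sim \Delta' \sim m\Lambda - \pi^*A \sim m\Lambda - \pi^*D$, the divisor $\Delta \coloneqq \Delta_0 + \pi^*D$ is $G$-stable, effective, integral, lies in $|m\Lambda|$, and satisfies $\Delta - \pi^*D = \Delta_0 \geq 0$, which is exactly what the criterion demands. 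The only delicate point is that Brion's theorem produces genuinely integral (rather than merely $\bQ$-Weil) $G$-stable representatives; this is built into the cycle-level formulation of Theorem \ref{t.pseffconeGvariety} and costs nothing beyond keeping track of integrality through the multiplicative rescalings performed above.
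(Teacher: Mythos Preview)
Your proof is correct and follows essentially the same approach as the paper: both directions rest on Lemma~\ref{l.criterionbigness}, and the forward direction applies Theorem~\ref{t.pseffconeGvariety} on both $X$ and $\bP(E^*)$ to replace the divisors by $G$-stable ones. The only difference is cosmetic: you invoke the integral form of Brion's theorem (as stated in Theorem~\ref{t.pseffconeGvariety}) directly to obtain integral $G$-stable representatives, whereas the paper works with $\bQ$-linear equivalence $A\sim_{\bQ} r_1 D'$, $m_1\Lambda-\pi^*A\sim_{\bQ} r_2\Delta'$ and clears denominators at the end by a suitable multiple $m_2$.
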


\begin{proof}
	One direction is clear by Lemma \ref{l.criterionbigness}. Thus we may assume that $E$ is big. By Lemma \ref{l.criterionbigness}, there exist a big divisor $A$ on $X$ and a positive integer $m_1$ such that $m_1\Lambda - \pi^*A$ is big. On the other hand, by Theorem \ref{t.pseffconeGvariety}, there exists a $G$-stable effective divisor $D'$ on $X$ such that $A\sim_{\bQ} r_1D'$ and a $G$-stable effective divisor $\Delta'$ on $\bP(E^*)$ such that $m_1\Lambda - \pi^*A \sim_{\bQ} r_2\Delta'$ for some rational number $r_1, r_2>0$. Set $\Delta=m_2(r_2\Delta'+r_1\pi^*D')$ for some sufficiently divisible positive integer $m_2$. Then we conclude by letting $m=m_1 m_2$ and $D=m_2 r_1 D'$.
\end{proof}

\subsection{Moment map of $G$-varieties}
\label{subsection:momentmap}

Let $X$ be an $n$-dimensional smooth algebraic variety. Then there exists a standard symplectic structure on the cotangent bundle $T^*X$ of $X$, which is given by a $2$-form $\omega=d\textbf{x}\wedge d\textbf{y}=\sum dx_i\wedge dy_i$, where $\textbf{x}=(x_1,\dots,x_n)$ is a tuple of local coordinates on $X$ and $\textbf{y}=(y_1,\dots,y_n)$ is an impulse, i.e., a tuple of dual coordinates in a cotangent space. If $X$ is a $G$-variety, then the symplectic structure on $T^*X$ is $G$-invariant and, for every $\xi\in \fg$, the velocity field of $\xi$ on $T^*X$ has a Hamiltonian $H_{\xi}=\xi_*$, the respective velocity field on $X$ considered as a linear function on $T^*X$. Furthermore, the action of $G$ on $T^* X$ is Hamiltonian, i.e., the map $\xi\mapsto H_{\xi}$ is a homomorphism of $\fg$ to the Poisson algebra of functions on $T^*X$. The dual morphism $\Phi_X^G:T^*X\rightarrow \fg^*$ defined as following
\[
\langle \Phi_X^G(w),\xi\rangle=H_{\xi}(w)=\langle w,H_{\xi}(x)\rangle,\ \forall w\in T_x^*X,\ \xi\in \fg,
\]
is called the \emph{moment map}. We denote by $\cM_X^G\subseteq \fg^*$ the closure of the image of the moment map. If $Z\subseteq X$ is a closed (maybe reducible) subvariety, we denote by $\cM_X^G(Z)\subseteq \fg^*$ the closure of the image $\Phi_X^G(T^*X|_Z)$. Let $T^{\fg}X\subseteq X\times \fg^*$ be the closure of the image of the following map
\[
\pi\times \Phi_X^G: T^*X \rightarrow X\times \fg^*,
\]
where $\pi:T^*X\rightarrow X$ is the natural projection. Then clearly the moment map factors as
\[
\Phi_X^G: T^*X\rightarrow T^{\fg} X \xrightarrow{\widehat{\Phi}_X^G} \fg^*.
\]
The morphism $\widehat{\Phi}_X^G$ is called the \emph{localised moment map}. The general fibres of $\widehat{\Phi}_X^G$ are the cotangent spaces $\fg_x^{\perp}= T^*_x Gx$ to general orbits and the induced map $T_x^{\fg} X \rightarrow \fg^*$ is exactly the cotangent map of the orbit map $\mu_x$ at $x$. Here $\fg_x$ is the Lie algebra of the isotropy subgroup $G_x$ of $G$ at $x$.

The moment map $\Phi_X^G$ is equivariant with respect to the natural $\bC^*$-actions on $T^*X$ and $\fg^*$. This implies that the $\Phi_X^G$ induces a \emph{projectivised moment map}
\[
\bar{\Phi}_X^G: \bP(T^*X) \dashrightarrow \bP(\fg^*).
\]
Then the closure of the image of $\bar{\Phi}_X^G$ is exactly $\bP(\cM_X^G)$ and $\cM_X^G$ is the affine cone of $\bP(\cM_X^G)$. Moreover, denote by $V\subseteq H^0(X,T_X)$ the subspace of Hamiltonians. Then $V$ can be naturally identified to a linear system $\bar{V}\subseteq |\sO_{\bP(T^*X)}(1)|$ and the rational map $\bar{\Phi}_X^G$ is exactly the rational map defined by the linear system $\bar{V}$. 

Finally, we note that the moment map $\Phi_X^G$ is $G$-equivariant and $\cM_X^G$ is a $G$-birational invariant of $X$. In particular, after passing to a smooth $G$-stable open subset, we can define the moment map for singular $G$-varieties. Moreover, the moment map also induces a homomorphism of filtered algebras
\begin{equation}
	\label{e.homo-filtered-algebras}
	\gr \Phi^*:\Sym^{\bullet}\fg=\bC[\fg^*] \rightarrow H^0(X,\Sym^{\bullet} TX)\subseteq \bC[T^*X],
\end{equation}
where $\bC[\fg^*]$ (resp. $\bC[T^*X]$) is the algebra of regular functions on $\fg^*$ (resp. $T^*X$).

If $G$ is a reductive linear algebraic group, the dimension of $\cM_X^G$ is given by Knop in \cite{Knop1990} in terms of two numerical invariants of $X$ related to the action of a Borel subgroup of $G$: the complexity and the rank.

\begin{defn}[\protect{Complexity and rank}]
	\label{d.rank-complexity}
	Let $G$ be a connected reductive linear algebraic group with a fixed Borel subgroup $B$, and let $X$ be an algebraic $G$-variety. 
	\begin{enumerate}
		\item The \emph{complexity} $c(X)$ of the action $G$ on $X$ is the codimension of a general $B$-orbit in $X$.
		
		\item The \emph{rank} $r(X)$ of the action $G$ on $X$ is the rank of $\Lambda(X)$, where $\Lambda(X)$ is the set of weights of all rational $B$-eigenfunctions on $X$.
	\end{enumerate}
\end{defn}

\begin{thm}[\protect{Dimension formula of $\cM_X^G$, \cite[Satz 7.1]{Knop1990}}]
	\label{t.KnopdimensionofMomentmap}
	Let $G$ be a connected reductive linear algebraic group and let $X$ be a projective $G$-variety. Then we have
	\[
	\dim(\cM_X^G)=\dim(\bP(\cM_X^G))+1=2\dim(X)-2c(X)-r(X).
	\]
\end{thm}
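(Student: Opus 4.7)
The first equality $\dim\cM_X^G=\dim\bP(\cM_X^G)+1$ is immediate: since $\Phi_X^G$ is $\bC^*$-equivariant with respect to fibrewise scaling on $T^*X$ and scaling on $\fg^*$ and the zero section maps to $0\in\fg^*$, the image $\cM_X^G$ is an irreducible Zariski-closed cone, so projectivising drops its dimension by exactly one.

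For the second equality, I would analyse $\Phi_X^G$ through a Borel-local model. The moment map is Poisson with respect to the canonical symplectic structure on $T^*X$ and the Kirillov--Kostant structure on $\fg^*$, so $\cM_X^G$ is a $G$-stable union of closures of coadjoint orbits; it therefore suffices to compute the generic coadjoint orbit in the image together with the transverse orbital moduli. By Sumihiro's theorem $X$ contains a $B$-stable open affine subset $U$, and since $U$ is dense one has $\cM_X^G=\overline{\Phi_X^G(T^*X|_U)}$. Applying the local structure theorem for $G$-varieties of Brion--Luna--Vust, after shrinking $U$ one obtains a parabolic $P\supset B$ with Levi decomposition $P=L\ltimes P_u$ and an isomorphism $U\cong P_u\times Z$, where $Z$ is an affine $L$-stable section such that $L$ acts on $Z$ through a quotient torus $T_X$ of dimension $r(X)$, with $\dim Z=c(X)+r(X)$.

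In this local model $T^*U$ splits $P$-equivariantly, and the moment map factors through a $P_u$-component (filling $\fp_u^*$, of dimension $\dim P_u$) and a $T^*Z$-component whose image, along the cocharacter lattice of $T_X$, has dimension $r(X)$. Saturating under the coadjoint $G$-action, the $P_u$-part sweeps out a subvariety of $\fg^*$ of dimension $2\dim P_u$, while the $T_X$-part contributes $r(X)$ additional directions transverse to it. Using $\dim X=\dim P_u+c(X)+r(X)$, a dimension count yields
\[
\dim\cM_X^G\;=\;2\dim P_u+r(X)\;=\;2\dim X-2c(X)-r(X).
\]

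The main obstacle will be exactly this last counting: verifying that the $P_u$- and $T_X$-contributions are transverse inside $\fg^*$, and correctly identifying the coadjoint-orbit dimension swept out by the $P_u$-image, which in turn requires a careful understanding of how the weights of $\fp_u$ interact with the rank lattice $\Lambda(X)$ under the $L$-action. An alternative strategy, should this local analysis become unwieldy, would be to reinterpret the problem algebraically via the graded map $\gr\Phi^*:\Sym^\bullet\fg\to H^0(X,\Sym^\bullet TX)$ from \eqref{e.homo-filtered-algebras} and apply Rosenlicht's theorem together with a $B$-invariance analysis to identify the transcendence degree of its image with $2\dim X-2c(X)-r(X)$.
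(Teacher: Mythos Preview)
The paper does not supply its own proof of this theorem; it is quoted as \cite[Satz 7.1]{Knop1990} and used as a black box throughout. So there is no ``paper's proof'' to compare against.

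That said, your sketch follows the same architecture as Knop's original argument: the local structure theorem reduces to a chart $U\cong P_u\times Z$ with $\dim Z=c(X)+r(X)$, and then one must compute the dimension of $\Phi_X^G(T^*U)$. The weak point is exactly where you flag it. The assertion that ``the $P_u$-part sweeps out a subvariety of $\fg^*$ of dimension $2\dim P_u$'' is not a formality: one must show that the generic fibre of $\Phi_X^G$ over $\cM_X^G$ has dimension precisely $2c(X)+r(X)$, or equivalently that the generic isotropy in the coadjoint orbit picture matches the rank lattice. Knop handles this not by a direct transversality count but by passing to the \emph{horospherical contraction} of $X$ (degenerating the $L$-action on $Z$ to the toric action of $T_X$), where the moment image is manifestly computable, and then arguing that $\dim\cM_X^G$ is constant in the family. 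Your proposed dimension count $2\dim P_u+r(X)$ is the right target, but making the ``saturating under the coadjoint action'' step rigorous without the contraction argument would require essentially redoing Knop's analysis of the generic stabiliser, which is the nontrivial core of his paper.

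Your alternative via the transcendence degree of the image of $\gr\Phi^*$ is also viable and closer in spirit to Knop's later treatment in terms of invariant theory of $\bC[T^*X]$, but it again hinges on identifying $\bC(T^*X)^B$, which brings you back to the same local-structure computation.
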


\section{Criteria for bigness and proof of Theorem \ref{t.Examples-main-thm}}

In this section, firstly we shall prove Proposition \ref{p.Intro-criterion} which gives a sufficient condition to guarantee the bigness of tangent bundles of smooth projective $G$-varieties via its moment map along effective big divisors. Next we will discuss several situations where the conditions in Proposition \ref{p.Intro-criterion} hold automatically. Finally, we apply these criteria to prove Theorem \ref{t.Examples-main-thm}, which confirms the bigness of tangent bundles of certain quasi-homogeneous spaces, including symmetric varieties, spherical varieties and equivariant compactifications of commutative linear algebraic groups.

\subsection{Criterion for bigness via moment map}
\label{subsection:criterion}

Let $m$ be a positive integer. For every $\xi \in \Sym^m \fg$, we denote by $H_{\xi}\in H^0(X,\Sym^m TX)$ the image of $\xi$ under the map $\gr\Phi^*$ \eqref{e.homo-filtered-algebras}. Then for any point $x\in X$, the evaluation $H_\xi (x)$ of $H_{\xi}$ at $x$ can be regarded as a homogeneous polynomial of degree $m$ over the cotangent space $T_x^*X$. We note that $H_\xi (x)$ may be identically zero over $T^*_x X$ and the following observation relates the vanishing locus of $H_{\xi}(x)$ in $T_x^*X$ to the zero locus of $\xi$ in $\fg^*$.

\begin{lem}
	\label{l.relationvanishinginclusion}
	Let $G$ be a connected algebraic group and let $X$ be a smooth projective $G$-variety. Given a positive integer $m$, an element $\xi\in \Sym^m \fg$ and a point $w\in T^*_x X$, then $H_\xi (x)$ vanishes at $w$ if and only if $\xi$ vanishes at $\Phi_X^G(w)\in \fg^*$.
\end{lem}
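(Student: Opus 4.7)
The plan is to reduce the statement to a tautology by identifying the filtered-algebra homomorphism $\gr\Phi^*$ of \eqref{e.homo-filtered-algebras} with the pullback of regular functions along the moment map. Once this identification is made, both sides of the desired equivalence become expressions of the same scalar, and the lemma follows at once.

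The first step is to unpack the meaning of ``$H_\xi(x)$ vanishes at $w$''. Via the standard duality between $T_xX$ and $T_x^*X$, the fibre $H_\xi(x)\in \Sym^m T_xX$ of the section $H_\xi\in H^0(X,\Sym^m TX)$ is a homogeneous polynomial function of degree $m$ on $T_x^*X$. Evaluating this polynomial at $w\in T_x^*X$ yields the same scalar as evaluating the regular function $H_\xi\in \bC[T^*X]$ at the point $w$ viewed in $T^*X$, since the degree $m$ part of $\bC[T^*X]$ for the fibrewise $\bC^*$-action is precisely $H^0(X,\Sym^m TX)$. Hence $H_\xi(x)(w)=H_\xi(w)$.

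The second, key step is to identify $\gr\Phi^*$ with the pullback $(\Phi_X^G)^*\colon\bC[\fg^*]\to \bC[T^*X]$. Both maps are $\bC$-algebra homomorphisms, and since $\bC[\fg^*]=\Sym^\bullet\fg$ is generated by its linear part $\fg$, it suffices to compare them on $\fg$. For $\xi\in\fg$, viewed simultaneously as the linear function $\eta\mapsto\langle\eta,\xi\rangle$ on $\fg^*$, the defining property of the moment map reads
\[
(\Phi_X^G)^*(\xi)(w)=\langle\Phi_X^G(w),\xi\rangle=H_\xi(w),\quad w\in T^*X,
\]
so $\gr\Phi^*=(\Phi_X^G)^*$ on $\fg$, and hence on all of $\Sym^\bullet\fg$ by multiplicativity.

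Combining the two steps, for every $\xi\in\Sym^m\fg$ and every $w\in T_x^*X$ one obtains $H_\xi(x)(w)=H_\xi(w)=\xi(\Phi_X^G(w))$, from which the equivalence of the two vanishing statements is immediate. The main ``obstacle'' is purely notational: one must keep the Hamiltonian $H_\xi\in \bC[T^*X]$ distinct from its fibrewise symbol $H_\xi(x)\in \Sym^m T_xX$, and consistently use the duality to view symmetric tensors as polynomials on the dual space. No substantive geometric input beyond the very definition of $\Phi_X^G$ is required.
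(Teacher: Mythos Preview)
Your proof is correct and follows essentially the same approach as the paper: both establish the identity $H_\xi(x)(w)=\xi(\Phi_X^G(w))$, from which the equivalence is immediate. The paper phrases this fibrewise, observing that $\Phi_X^G$ restricted to $T_x^*X$ is the cotangent map $\Phi_{X,x}^G\colon T_x^*X\to\fg_x^\perp\hookrightarrow\fg^*$ of the orbit map, so that $H_\xi(x)=\xi\circ\Phi_{X,x}^G$; you instead verify the global identification $\gr\Phi^*=(\Phi_X^G)^*$ on the generators $\fg$ and extend multiplicatively, which amounts to the same thing.
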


\begin{proof}
	Note that the moment map $\Phi_X^G$ restricted to $T^*_x X$ is just the following composition
	\[
	\Phi_{X,x}^G : T^*_x X \rightarrow T^*_x O_x = \fg_x^{\perp}\hookrightarrow \fg^*,
	\]
	where $O_x$ is the $G$-orbit of $x$. In particular, the form $H_{\xi}(x)$ is the composition $\xi\circ \Phi_{X,x}^G$, where $\xi$ is regarded as a function over $\fg^*$.
\end{proof}

Given a positive integer $m$ and a Weil divisor $A$ on $X$, recall that we have the following natural isomorphism
\begin{equation}
	H^0(\bP(T^*X),\sO_{\bP(T^*X)}(m)\otimes \sO_{\bP(T^*X)}(-\pi^*A)) \rightarrow H^0(X,\Sym^m T_X\otimes \sO_X(-A)).
\end{equation}
Let $\sigma\in H^0(\bP(T^*X),\sO_{\bP(T^*X)}(m))$ be a section and denote by $H_{\sigma}\in H^0(X,\Sym^m T_X)$ the corresponding symmetric vector field on $X$. Then, for a prime divisor $A$ on $X$, the section $\sigma$ vanishes along $\pi^*A$ if and only if the corresponding form $H_{\sigma}$ vanishes along $A$.

\begin{proof}[Proof of Proposition \ref{p.Intro-criterion}]
	By Lemma \ref{l.coefficient-big-divisors}, we may assume that $A$ is reduced. Since both $\cM_X^G(A)$ and $\cM_X^G$ are invariant under the dilation action of $\bC^*$ on $\fg^*$, by assumption, we have
	\[
	\dim(\bP(\cM_X^G(A)))<\dim(\bP(\cM_X^G)).
	\]
	In particular, as $\cM_X^G$ is irreducible, there is a hypersurface in $\bP(\fg^*)$ defined by a homogeneous polynomial $\xi\in \Sym^m \fg$ of degree $m$ such that it contains $\bP(\cM_X^G(A))$ but not $\bP(\cM_X^G)$. Let $H_{\xi}\in H^0(X,\Sym^m T_X)$ be the corresponding symmetric vector field on $X$. Then we have $H_{\xi}\not=0$ and $H_{\xi}$ vanishes identically along $A$. 
	
	Let $\sigma\in H^0(\bP(T^*X,\sO_{\bP(T^*X)}(m)))$ be the global section such that $H_{\sigma}=H_{\xi}$. Then according to Lemma \ref{l.relationvanishinginclusion} and the discussion before the proof, the section $\sigma$ vanishes identically along $\pi^*A$ . In particular, the following divisor
	\[
	m\Lambda - \pi^*A\sim \text{div}(\sigma)-\pi^*A\geq 0
	\]
	is pseudoeffective. Thus, as $A$ is big, it follows from Lemma \ref{l.criterionbigness} that $TX$ is big.
 \end{proof}

\begin{notation}
	\label{n.Horizontal-part}
	Given a (maybe non-reduced and reducible) hypersurface $\cH\subseteq \bP(\fg^*)$ defined by $\xi\in \Sym^m\fg$. We will denote by $D_{\xi}\in |\sO_{\bP(T^*X)}(m)|$ the divisor corresponding to $\xi$ and the divisor $D_{\cH}$ is defined as the horizontal part of $D_{\xi}$.
\end{notation}

\subsubsection{Criteria for bigness of boundary divisors}

To apply Proposition \ref{p.Intro-criterion}, firstly we need to find an effective big divisor $A$ in $X$. In most cases considered in this paper, the variety $X$ will be a smooth quasi-homogeneous projective variety and it is natural to choose $A$ to be the complement of the unique open orbit. 

\begin{prop}[\protect{\cite[\S\,I, Proposition 1 and Theorem 1]{Goodman1969}}]
	\label{p.Goodman-affinity}
	Let $X$ be a projective manifold and let $U\subseteq X$ be an affine dense open subset of $X$. Then the complement $D:=X\setminus U$ has pure codimension $1$ and the line bundle $\sO_X(D)$ is big.
\end{prop}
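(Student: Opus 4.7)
\medskip

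My plan is to prove the two assertions independently: that the complement $D$ is pure of codimension one, and that $\sO_X(D)$ is big. Both rely on the affineness of $U$ in an essential way, but through rather different mechanisms.

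For pure codimensionality, I would argue by contradiction. Suppose $D$ has an irreducible component $Z$ of codimension at least $2$, and let $D' \subseteq D$ be the union of the codimension-one components of $D$. Set $U' := X\setminus D'$, so that $U\subsetneq U'$ and $U'\setminus U$ is contained in $Z$, hence of codimension $\geq 2$ in the smooth variety $U'$. Since $X$, and therefore $U'$, is smooth, Hartogs' extension theorem implies that restriction induces an isomorphism $\sO(U')\xrightarrow{\sim}\sO(U)$. Now $U$ is affine, so $U = \Spec\sO(U) = \Spec\sO(U')$, and the inclusion $U\hookrightarrow U'$ together with the natural morphism $U'\to\Spec\sO(U')$ must be mutually inverse on the open set $U$ and on the dense open $U$ of $U'$. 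A standard argument (using quasi-compactness and separatedness of $U'$) then shows $U'=U$, contradicting $U\subsetneq U'$.

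For bigness, I would use an elementary counting argument based on the Gelfand--Kirillov dimension of $\sO(U)$. Choose a finite set of generators $f_1,\ldots,f_N$ of the finitely generated $\bC$-algebra $\sO(U)$. Each $f_i$ is a rational function on $X$ whose polar divisor is supported in $D$, so there exists $m_0\in\bN$ such that $f_i\in H^0(X,\sO_X(m_0 D))$ for every $i$. For any $k\geq 1$, every monomial $f_1^{a_1}\cdots f_N^{a_N}$ with $a_1+\cdots+a_N\leq k$ then lies in $H^0(X,\sO_X(km_0 D))$. Denoting by $A_k\subseteq \sO(U)$ the $\bC$-subspace spanned by such monomials, we obtain
\[
h^0(X,\sO_X(km_0 D))\;\geq\;\dim_{\bC} A_k.
\]
Since $\sO(U)$ is a finitely generated commutative $\bC$-algebra of Krull dimension $n=\dim X$, a standard fact (Gelfand--Kirillov = Krull for affine commutative algebras) yields $\dim_{\bC} A_k\gtrsim k^n$. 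Therefore $h^0(X,\sO_X(km_0 D))$ grows at least like $k^n$, which is precisely the bigness of $\sO_X(D)$.

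The main technical hurdle is the pure codimension claim: the Hartogs step is clean, but passing from the isomorphism $\sO(U)\cong\sO(U')$ to the equality $U=U'$ requires noting that a quasi-affine scheme is recovered from its global sections only up to being replaced by the image of its canonical map to $\Spec$ of the global sections, and one must check this image is open. The bigness step by contrast is essentially bookkeeping on generators, since the key algebraic input---that the growth rate of monomials matches the Krull dimension---is classical.
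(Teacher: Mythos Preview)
Your proposal is essentially correct and follows a genuinely different, more self-contained route than the paper. The paper does not argue either point from scratch: it cites Goodman's Proposition~1 for the pure-codimension-one claim, and for bigness it invokes Goodman's Theorem~1, which produces a blow-up $\varphi:\bar X\to X$ centred in $D$ such that $\varphi^{-1}(D)$ supports an effective ample divisor $A$; then $\varphi_*A$ is big with support exactly $D$, and Lemma~\ref{l.coefficient-big-divisors} transfers bigness to $D$ itself. Your arguments---Hartogs for the first part and a Gelfand--Kirillov section-counting argument for the second---are more elementary and avoid quoting Goodman's blow-up theorem; the paper's route is shorter only because it outsources the work.

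One refinement for the step you flagged in part~(1): rather than arguing globally with the canonical map $U'\to\Spec\sO(U')$, it is cleaner to localise. Pick an affine open $V\subseteq U'$ meeting $U'\setminus U$. Since $X$ is separated and both $V$ and $U$ are affine, $V\cap U$ is affine; and since $V\setminus(V\cap U)$ has codimension~$\geq 2$ in the smooth $V$, Hartogs gives $\sO(V)\cong\sO(V\cap U)$. Two affine varieties with the same coordinate ring and one an open subscheme of the other must coincide, so $V=V\cap U\subseteq U$, contradicting the choice of $V$. This closes the gap without appealing to any delicate statement about quasi-affine schemes.
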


\begin{proof}
	The first statement follows from \cite[Proposition 1]{Goodman1969}. For the second statement, since $U$ is affine, by \cite[Theorem 1]{Goodman1969}, there is a closed subvariety $Z$ of $D$ and a blowing-up $\varphi:\bar{X}\rightarrow X$ with the centre $Z\subseteq D$ such that $\varphi^{-1}(D)$ is the support of an effective ample divisor $A$ on $\bar{X}$. In particular, the push-forward $\varphi_*A$ is a big Weil divisor with support $D$. Then it follows from Lemma \ref{l.coefficient-big-divisors} that $D$ itself is big.
\end{proof}

Let $G$ be connected linear algebraic group. A closed subgroup $H$ of $G$ is said to be \emph{regularly embedded} in $G$ if $\Rad_u(H)\subseteq \Rad_u(G)$, where $\Rad_u(H)$ (resp. $\Rad_u(G)$) is the unipotent radical of $H$ (resp. $G$).

\begin{lem}[\protect{Criteria for bigness of boundary}]
	\label{l.criteria-bigness-boundary-divisor}
	Let $G$ be a connected linear algebraic group and let $X$ be a smooth projective $G$-variety with a Zariski open dense orbit $O$. Then the complement $D\coloneqq X\setminus O$ is a big divisor if one of the following holds.
	\begin{enumerate}
		\item The group $G$ is solvable.
		
		\item For a point $x\in O$, the isotropy subgroup $G_x$ of $G$ at $x$ is regularly embedded in $G$.
		
		\item For a point $x\in O$, the isotropy subgroup $G_x$ of $G$ at $x$ is reductive.
	\end{enumerate}
\end{lem}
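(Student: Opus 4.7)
The plan in all three cases is to verify that the open orbit $O \cong G/G_x$ is an affine open subset of $X$, and then to apply Proposition 2.13 directly in order to conclude that $D = X\setminus O$ has pure codimension one and is big. Thus the problem reduces in each situation to a structural statement about homogeneous spaces of linear algebraic groups.

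For case (3), one invokes Matsushima's criterion (in the form valid for an arbitrary connected linear algebraic group $G$): the homogeneous space $G/G_x$ is affine whenever $G_x$ is reductive. So $O$ is affine and Proposition 2.13 delivers the claim.

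For case (2), the hypothesis $\Rad_u(G_x)\subseteq \Rad_u(G)$ says that $G_x$ is \emph{regularly embedded} in $G$. Under this assumption, passing to the quotient by the common unipotent radical $\Rad_u(G_x)$ allows one to reduce to the situation of case (3) with a reductive stabiliser in an ambient reductive group, and so $G/G_x$ is again affine by a classical theorem of Bia\l ynicki-Birula--Hochschild--Mostow; Proposition 2.13 then finishes the argument.

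For case (1) one could once more quote that $G/H$ is affine for every closed subgroup of a connected solvable linear algebraic group, but Theorem 2.12 permits a more self-contained route that I would prefer. Namely, start from any ample divisor on $X$; by Theorem 2.12 it is rationally equivalent to a $G$-stable effective divisor $B$, which is again big. Since $G$ is connected and $O$ is the unique open $G$-orbit, any $G$-stable prime divisor must be disjoint from $O$ and hence contained in $\Supp(D)$, so $\Supp(B)\subseteq \Supp(D)$. Then for any sufficiently small $\varepsilon>0$ the divisor $B+\varepsilon D$ is a big effective $\bR$-divisor whose support equals $\Supp(D)$, and Lemma 2.9 yields that $D$ itself is big.

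The step I expect to be the main obstacle is case (2): one must carefully identify and apply the correct form of the affineness criterion for $G/G_x$ when $G_x$ is only assumed to be regularly embedded rather than reductive, and this is precisely where the inclusion $\Rad_u(G_x)\subseteq \Rad_u(G)$ is used in order to reduce the question to the Matsushima setting of case (3). Once this point is cleared, the three cases fit together smoothly via Proposition 2.13, Theorem 2.12 and Lemma 2.9.
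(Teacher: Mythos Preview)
Your treatment of cases (2) and (3) matches the paper's approach: reduce to the affineness of $O=G/G_x$ and apply Proposition~2.13. The paper in fact handles case (1) the same way, citing Timashev for all three affineness statements in a single line.

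Your alternative route for case (1) via Theorem~2.12 is an appealing idea and correctly shows that the \emph{divisorial part} of $X\setminus O$ is big. However, it does not by itself show that $X\setminus O$ has pure codimension~$1$, which is part of the assertion that ``$D$ is a big divisor''. When you write $B+\varepsilon D$ and invoke Lemma~2.9, you are already treating $D$ as a Weil divisor, and this is exactly what remains to be justified: a priori the complement of an open orbit can have components of higher codimension. The only way I see to establish pure codimension one here is via the affineness of $G/G_x$ (which, as you note, is elementary for solvable $G$) together with the first clause of Proposition~2.13 --- precisely the input you were hoping to bypass. So the alternative argument is not genuinely more self-contained, though once the purity of $D$ is known it does give a pleasant independent proof of bigness.
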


\begin{proof}
	By Proposition \ref{p.Goodman-affinity}, it is enough to show that $O$ is an affine variety and the latter follows from certain known criteria for affineness of homogeneous spaces (see for instance \cite[Theorem 3.5, Theorem 3.7 and Theorem 3.8]{Timashev2011}).
\end{proof}

\subsubsection{Image of moment map along boundary divisors}

Once we have an effective big divisor $D$ on a projective $G$-variety $X$, to apply Proposition \ref{p.Intro-criterion}, we need to control the dimension of $\cM_X^G(D_{\red})$. In the following we consider the case where $D$ is $G$-stable.

\begin{lem}
	\label{l.orbit-normaliser}
	Let $G$ be a connected algebraic group and let $X=G/H$ be a homogeneous variety. Denote by $N$ the normaliser $N_G(H)$ of $H$ in $G$. Then we have $T^*X=G\ast_H \fh^{\perp}$ and the moment map $\Phi_X^G$ factors as
	\[
	G\ast_H \fh^{\perp} \rightarrow G\ast_N \fh^{\perp} \rightarrow \fg^*,
	\]
	where $N$ acts on $\fh^{\perp}$ by coadjoint action. In particular, we have 
	\begin{equation}
		\label{e.dimension-normaliser}
		\dim(\cM_X^G)\leq \dim(X)+\dim(\fg)-\dim(\fn).
	\end{equation}
\end{lem}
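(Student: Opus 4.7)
The plan is to first establish the standard identification $T^*X \cong G\ast_H \fh^{\perp}$ and compute the moment map in these coordinates, then observe that everything descends to an $N$-quotient, and finally read off the dimension bound from the resulting factorisation.

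First I would make the associated-bundle description of the cotangent bundle precise. Fixing the base point $x_0=eH$, the tangent space $T_{x_0}X$ is canonically $\fg/\fh$, so the cotangent space is its annihilator $\fh^{\perp}\subseteq \fg^*$; transporting by left translation and then quotienting out the $H$-ambiguity in the choice of representative produces the associated bundle $G\ast_H \fh^{\perp}$, where $H$ acts on $\fh^{\perp}$ via the coadjoint representation (which preserves $\fh^{\perp}$ since $H$ normalises $\fh$). Using this identification together with the defining relation $\langle \Phi_X^G(w),\xi\rangle = \langle w,\xi_*(x)\rangle$, a brief computation with fundamental vector fields, evaluating $\xi_*(gx_0)$ by conjugating back to $x_0$ and then applying left translation, yields the explicit formula
\[
\Phi_X^G([g,\alpha]) = \mathrm{Ad}^*(g)(\alpha),
\]
which is the first assertion of the lemma.

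Next, since $N=N_G(H)$ normalises $\fh$, its coadjoint action on $\fg^*$ preserves $\fh^{\perp}$, so $G\ast_N \fh^{\perp}$ is a well-defined associated bundle and there is a natural surjection $G\ast_H \fh^{\perp}\to G\ast_N \fh^{\perp}$ obtained by enlarging the structure group. The identity $\mathrm{Ad}^*(gn)(\mathrm{Ad}^*(n^{-1})\alpha)=\mathrm{Ad}^*(g)(\alpha)$ shows that $[g,\alpha]\mapsto \mathrm{Ad}^*(g)(\alpha)$ is constant on the $N$-orbits in $G\times \fh^{\perp}$, so it descends to a morphism $\widehat{\Phi}:G\ast_N \fh^{\perp}\to \fg^*$, giving the desired factorisation of $\Phi_X^G$.

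Finally, a direct dimension count yields
\[
\dim(G\ast_N \fh^{\perp}) = \dim G - \dim N + \dim \fh^{\perp} = \dim X + \dim \fg - \dim \fn,
\]
and since $\cM_X^G$ is by definition the closure of the image of $\Phi_X^G$, which now factors through $G\ast_N \fh^{\perp}$, the inequality in the lemma follows. I do not expect a serious obstacle here: the only genuine care needed is bookkeeping of left versus right actions and the coadjoint conventions when computing $\xi_*(gx_0)$, which is the step where sign or conjugation errors most naturally creep in.
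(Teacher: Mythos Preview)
Your proposal is correct and follows essentially the same route as the paper: identify $T^*X$ with the associated bundle $G\ast_H\fh^{\perp}$, read off the moment map as $[g,\alpha]\mapsto \mathrm{Ad}^*_g(\alpha)$, observe that $N$ preserves $\fh^{\perp}$ under the coadjoint action so that everything descends to $G\ast_N\fh^{\perp}$, and finish with the dimension count $\dim(G\ast_N\fh^{\perp})=\dim(G/N)+\dim(\fh^{\perp})=\dim X+\dim\fg-\dim\fn$. The only cosmetic difference is that the paper phrases the first step via the embedding $T^*X\hookrightarrow X\times\fg^*$ rather than via fundamental vector fields, but the content is identical.
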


\begin{proof}
	This is a standard fact about homogeneous spaces. Let us recall the proof for the reader's convenience. There is a canonical isomorphism $T_x X\cong \fg/\fg_x$ for any point $x\in X$. In particular, we get a canonical embedding $T^*X\rightarrow X\times \fg^*$. Then the isomorphism $\Psi:G\ast_H {\fh}^{\perp}\rightarrow T^*X$ is given as follows:
	\[
	\Psi(g,w)=([gH],\text{Ad}_g^*(w)),
	\]
	where $g\in G$, $w\in \fh^{\perp}$ and $\text{Ad}_g^*\in \GL(\fg^*)$ is the coadjoint representation of $\fg$. In particular, under the isomorphism $\Psi$, the moment map $\Phi_X^G:T^*X\rightarrow \fg^*$ can be written as 
	\[
	\Phi_X^G(g\ast w)=\text{Ad}_g^*(w),
	\]
	where $g\in G$ and $w\in \fh^{\perp}$. For any $n\in N$, we have $\text{Ad}_n^*(\fh^{\perp})=\fh^{\perp}$ by definition. This immediately implies that $\Phi_X^G$ factors through $G\ast_N \fh^{\perp} \rightarrow \fg^*$. The inequality \eqref{e.dimension-normaliser} then follows from the fact that $\dim(\cM_X^G)\leq \dim(G\ast_N \fh^{\perp})=\dim(G/N)+\dim(\fh^{\perp})$.
\end{proof}

\begin{lem}
	\label{l.normaliser-divisor}
	Let $G$ be a connected linear algebraic group and let $X$ be a smooth projective $G$-variety. Let $D$ be a $G$-stable prime divisor in $X$. Then we have
	\[
	\dim(\cM_X^G(D))=\dim(\cM_D^G)\leq \dim(X) + \dim(\fg) - \dim(\fn) - 1,
	\]
	where $\fn$ is the Lie algebra of the normaliser $N_G(H)$ of the isotropy subgroup of $G$ at a general point $x\in D$. 
\end{lem}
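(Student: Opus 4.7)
The plan is to prove the two assertions of the lemma separately. Throughout, let $U \subseteq D$ denote the smooth locus; because $D$ is prime, $U$ is open and dense in $D$, and the moment map of $D$ is defined on $T^*U$.

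For the equality $\dim \cM_X^G(D) = \dim \cM_D^G$, I would first observe that at every $y \in U$ the orbit map $\mu_y : G \to X$ factors through $D$, which forces its cotangent map at $y$ to factor as $T^*_y X \twoheadrightarrow T^*_y D \to \fg^*$. Both the left arrow and the composition surject onto $\fg_y^{\perp}$, so the pointwise moment-map images coincide. Since $U$ is dense in $D$, the loci $T^*X|_U$ and $T^*X|_D$ have the same image closure under $\Phi_X^G$, and this closure equals $\overline{\bigcup_{y \in U} \fg_y^{\perp}}$, which by the same pointwise description is also $\cM_D^G$. This yields the desired equality of dimensions.

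For the dimension bound, the plan is to transcribe the proof of Lemma \ref{l.orbit-normaliser} to the present possibly non-homogeneous setting using a generic-isotropy principle. Concretely, I would first invoke the standard fact that, for the action of the connected algebraic group $G$ on the irreducible variety $D$, there is a dense open $V \subseteq U$ over which the isotropy subgroups $G_y$ are all conjugate in $G$. Fix $x \in V$ and set $H = G_x$, $N = N_G(H)$, and $d = \dim Gx$. For each $y \in V$ there exists $g \in G$ with $G_y = gHg^{-1}$, hence $\fg_y^{\perp} = \text{Ad}_g^{*}(\fh^{\perp})$. Consequently $\bigcup_{y \in V} \fg_y^{\perp} \subseteq \bigcup_{g \in G} \text{Ad}_g^{*}(\fh^{\perp}) = \Image(\phi)$, where $\phi : G \ast_N \fh^{\perp} \to \fg^*$ is the coadjoint map of Lemma \ref{l.orbit-normaliser}. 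Taking closures,
\[
\dim \cM_D^G \;\leq\; \dim (G \ast_N \fh^{\perp}) \;=\; (\dim G - \dim \fn) + (\dim \fg - \dim \fh) \;=\; d + \dim \fg - \dim \fn,
\]
and the bound $d \leq \dim D = \dim X - 1$ finishes the estimate.

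The step I expect to be the main hurdle is invoking the generic-isotropy principle in exactly the generality required: $G$ is only assumed connected linear algebraic (not reductive), and $D$ is irreducible but possibly singular. This should follow from standard results (e.g.\ Rosenlicht-type theorems on rational quotients applied after restricting to the smooth $G$-stable open $U$), but if any technical difficulty arises one can bypass it by replacing the conjugacy statement for the $G_y$ with the weaker fact that $\dim \fg_y$ is constant on a dense open of $D$, and then use the $G$-equivariance of $\Phi_D^G$ together with the irreducibility of $\cM_D^G$ to reduce to a single $\text{Ad}^{*}(G)$-orbit in $\fg^*$.
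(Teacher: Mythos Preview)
Your argument for the equality $\dim\cM_X^G(D)=\dim\cM_D^G$ is correct and matches the paper's: the factorisation $T^*X|_{D_{\reg}}\to T^*D_{\reg}\to\fg^*$ gives it immediately.

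The inequality, however, has a genuine gap. The generic-conjugacy principle you invoke is \emph{false} for non-reductive $G$, and in particular for the abelian groups that are central to this paper. When $G$ is commutative, conjugation is trivial, so ``all $G_y$ conjugate'' means ``all $G_y$ equal'', which need not hold. More to the point, your intermediate bound $\dim\cM_D^G\leq d+\dim\fg-\dim\fn$ (with $d=\dim O_x$) is strictly stronger than the lemma's bound, and it is false. Take $G=\bG_a^3$ acting on $X=\Bl_S\bP^3$ as in Example~\ref{e.Examples-EC-structure}, with $S\subset H\cong\bP^2$ a smooth conic, and let $D=E$ be the exceptional divisor. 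Here $d=1$, $N=G$, so your bound reads $\dim\cM_D^G\leq 1$. But the isotropy Lie algebra at a general point of $E_s$ is the $2$-plane $p_o^{-1}(\textbf{T}_sS)\subset\fg$, which varies with $s$; the union $\bigcup_s\fg_{z_s}^{\perp}$ is the affine cone over the dual conic $\check S$, hence $\dim\cM_D^G=2$. Your fallback does not rescue this either: for abelian $G$ the coadjoint orbits are points, so ``reducing to a single $\text{Ad}^*(G)$-orbit'' would force $\dim\cM_D^G=0$.

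The paper avoids this entirely by inserting the orbit space. Using the localised moment map $\widehat{\Phi}_D^G$ one gets
\[
\dim\cM_D^G = \dim D - \dim O_x + \dim\cM_{O_x}^G,
\]
and \emph{then} applies Lemma~\ref{l.orbit-normaliser} to the single homogeneous space $O_x=G/G_x$ to bound $\dim\cM_{O_x}^G\leq d+\dim\fg-\dim\fn$. The extra term $\dim D-\dim O_x$ is exactly what absorbs the variation of the isotropy subalgebras across different orbits; dropping it is what breaks your estimate.
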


\begin{proof}
	Since $D$ is $G$-stable, the restriction of the moment map $\Phi_X^G$ to $D$ factors as
	\[
	 T^*X|_{D_{\reg}} \rightarrow T^*D_{\reg} \rightarrow \fg^*,
	\]
	where $D_{\reg}$ is the smooth locus of $D$. In particular, we have $\cM_X^G(D)=\cM_D^G$. Let $O_x$ be the orbit of a general point $x\in D_{\reg}$. Then we also have
	\[
	\dim(\cM_D^G)=\dim(\widehat{\Phi}_D^G(T^{\fg} D_{\reg}))=\dim(D)-\dim(O_x)+\dim(\cM_{O_x}^G).
	\]
	Then we conclude by applying Lemma \ref{l.orbit-normaliser} to the homogeneous space $O_x=G/G_x$.
\end{proof}

\begin{lem}
	\label{l.Gstable}
	Let $G$ be a connected reductive linear algebraic group and let $X$ be a smooth projective $G$-variety. Let $D$ be a $G$-stable prime divisor in $X$. Then
	\[
	\dim(\cM_X^G(D))=\dim(\cM_D^G)<\dim(\cM_X^G).
	\]
	if and only if
	\begin{center}
		$c(X)=c(D)$ and $r(X)=r(D)+1$.
	\end{center}
\end{lem}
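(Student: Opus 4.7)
The plan is to combine Lemma \ref{l.normaliser-divisor}, which already yields the first equality $\dim(\cM_X^G(D))=\dim(\cM_D^G)$, with Knop's dimension formula (Theorem \ref{t.KnopdimensionofMomentmap}) applied simultaneously to $X$ and to $D$. Since the moment map image, complexity and rank are all $G$-birational invariants, Knop's formula applies to $D$ after passing to its smooth locus. Subtracting the two resulting identities produces
\[
\dim(\cM_X^G) - \dim(\cM_D^G) \;=\; 2 + 2\bigl(c(D) - c(X)\bigr) + \bigl(r(D) - r(X)\bigr).
\]

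The \emph{if} direction is then immediate by substitution: when $c(D)=c(X)$ and $r(D)=r(X)-1$, the right-hand side equals $1$, which is strictly positive.

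For the \emph{only if} direction, the approach is to invoke the classical dichotomy from the Luna--Vust/Brion--Knop theory of equivariant embeddings: for any $G$-stable prime divisor $D$ in a normal $G$-variety $X$ with $G$ reductive, exactly one of the following holds: either (a) $c(D)=c(X)-1$ and $r(D)=r(X)$, or (b) $c(D)=c(X)$ and $r(D)=r(X)-1$. Substituting these two possibilities into the displayed identity, case (a) produces a drop of $0$, while case (b) produces a drop of exactly $1$. Hence the strict inequality $\dim(\cM_D^G)<\dim(\cM_X^G)$ happens precisely in case (b), which is exactly the asserted condition on $c$ and $r$.

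The main obstacle is the dichotomy itself. In the spherical setting ($c(X)=0$) it follows from the colored fan formalism, while in the general reductive setting one analyses the $G$-invariant valuation $v_D$ attached to $D$: either it defines a new primitive ray in the valuation cone, contributing an extra rank direction (case (b)), or a generic $B$-orbit in $D$ attains the maximal $B$-orbit dimension in $X$ (case (a)). In practice the cleanest presentation is to quote this dichotomy directly from \cite{Knop1990}, after which the proof reduces to the bookkeeping with Knop's formula above.
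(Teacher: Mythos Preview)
Your overall strategy matches the paper's: the first equality comes from Lemma~\ref{l.normaliser-divisor}, and then Knop's formula (Theorem~\ref{t.KnopdimensionofMomentmap}) is applied to both $X$ and $D$ to produce the displayed identity
\[
\dim(\cM_X^G)-\dim(\cM_D^G)=2-2\bigl(c(X)-c(D)\bigr)-\bigl(r(X)-r(D)\bigr).
\]
The \emph{if} direction is then immediate in both arguments.

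Where you diverge is the \emph{only if} direction. You invoke a dichotomy asserting that $(c(X)-c(D),\,r(X)-r(D))$ is always either $(1,0)$ or $(0,1)$, and you propose to cite it from \cite{Knop1990}. This is stronger than what is needed, you do not prove it, and its precise location in \cite{Knop1990} is doubtful---that paper is about the Weyl group and moment map, not about the complexity/rank drop along $G$-stable divisors. The paper's argument avoids this entirely: it uses only the standard monotonicity $c(D)\leq c(X)$ and $r(D)\leq r(X)$, with both equalities forcing $D=X$ \cite[Theorem~5.7]{Timashev2011}. Writing $a=c(X)-c(D)\geq 0$ and $b=r(X)-r(D)\geq 0$ with $(a,b)\neq(0,0)$, the displayed identity becomes $2-2a-b$, which is strictly positive iff $2a+b\leq 1$, i.e.\ iff $(a,b)=(0,1)$. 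That is a one-line integer computation, and it makes your dichotomy superfluous. So your proof is not wrong in spirit, but it leans on a result you have not pinned down, while the paper gets there with a weaker and cleanly citable input.
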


\begin{proof}
	This follows directly from Knop's dimension formula, see Theorem \ref{t.KnopdimensionofMomentmap}. Here we remark that we have always $c(D)\leq c(X)$, $r(D)\leq r(X)$ and the equality holds if and only if $D=X$ \cite[Theorem 5.7]{Timashev2011}.
\end{proof}

\begin{example}[\protect{Quintic del Pezzo threefold}]
	\label{e.V5}
	Let $X=V_3$ be the smooth quintic del Pezzo threefold, e.g., a smooth codimension $3$ linear section of $\Gr(2,5)\subseteq \bP^9$. Then $TX$ is big by \cite[Theorem 1.5]{HoeringLiuShao2020}. Denote by $H$ the ample generator of $\Pic(X)$. Recall that there is an $\SL_2$-action on $X$ with three orbits \cite{MukaiUmemura1983}: a unique open orbit, a unique $2$-dimensional orbit whose closure $D$ is linearly equivalent to $2H$, a unique $1$-dimensional orbit which is a rational normal curve of degree $6$. Then under the $\SL_2$-action,  we have 
	\begin{center}
		$c(X)=r(X)=1$, $c(D)=0$ and $r(D)=1$.
	\end{center}
	Hence, by Theorem \ref{t.KnopdimensionofMomentmap}, we obtain that $\cM_X^{G}=\cM_D^G=\fg^*$. This shows that the converse of Proposition \ref{p.Intro-criterion} is false in general.
\end{example}

\subsection{Proof of Theorem \ref{t.Examples-main-thm}}

\label{subsection.proofmainthm}

In this subsection, we aim to apply the criteria proved in the previous subsection to prove the bigness of the tangent bundles of certain interesting quasi-homogeneous $G$-varieties. In particular, we shall finish the proof of Theorem \ref{t.Examples-main-thm}.

\subsubsection{Spherical varieties}

Let $G$ be a connected reductive linear algebraic group and let $X$ be a normal $G$-variety. Then $X$ is said to be \emph{spherical} if $c(X)=0$. In particular, there is an open $G$-orbit $G/H\subseteq X$. Let $Y$ be a $G$-orbit in $X$. Denote by $\cV_Y(X)$ the set of $G$-stable prime divisors in $X$ containing $Y$ and by $\cD_Y(X)$ the set of $B$-stable but not $G$-stable prime divisors in $X$ containing $Y$. Write $\bX_{\bQ}^\vee$ the tensor product of the dual lattice of $X$ with $\bQ$. For any prime divisor $D$ in $X$ there is an associated valuation $\nu_D$ and also an associated element $\rho(D)$ in $\bX_{\bQ}^{\vee}$. Denote by $\cC_Y^{\vee}(X)\subseteq \bX_{\bQ}^{\vee}$ the cone generated by the images of divisors in $\cV_Y(X)$ and $\cD_Y(X)$.

\begin{example}
	\label{e.Examples-Spherical-Varieties}
	We collect some typical examples of spherical varieties.
	\begin{enumerate}
		\item Recall that a toric variety is a normal variety $X$ with a dense orbit of a torus $T=\bG_m^r$ such that the points in the dense $T$-orbit have a trivial stabiliser in $T$. The variety $X$ is spherical for $G=T$ with $H=\{e\}$. Moreover, we have $r(X)=\dim(T)=\dim(X)$. Recall that it is shown in \cite[Corollary 1.3]{Hsiao2015} that the tangent bundle of a smooth projective toric variety is big.
		
		\item For $G$ a connected semi-simple linear algebraic group and $P$ a parabolic subgroup containing a maximal torus $T$ of $G$, the quotient $G/P$ is a projective rational homogeneous space and the Bruhat decomposition implies that $G/P$ is a spherical $G$-variety. Moreover, we have $r(X)=0$. Thus the moment map $\bar{\Phi}_X^G$ is generically finite \cite{Richardson1974} and hence $TX$ is big (cf. Theorem \ref{t.KnopdimensionofMomentmap}).
		
		\item Let $G$ be a connected semi-simple linear algebraic group equipped with a non-identical involution $\theta\in \Aut(G)$. Let $H$ be a closed subgroup of $G$ such that $G^{\theta}\subseteq H\subseteq N_G(G^{\theta})$. Then $G/H$ is said to be a \emph{symmetric homogeneous space} and $G/H$-embeddings are called \emph{symmetric varieties}. The symmetric varieties are spherical.
	\end{enumerate} 
\end{example}

\begin{prop}
	\label{p.G-spherical}
	Let $G$ be a connected reductive linear algebraic group and let $X$ be smooth projective spherical $G$-variety. Let $D$ be a $G$-stable prime divisor in $X$. Then we have $c(X)=c(D)=0$ and $r(D)=r(X)-1$. In particular, if there exists a $G$-stable affine open subset $O$ of $X$, then $TX$ is big.
\end{prop}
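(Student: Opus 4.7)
The plan is to establish the numerical identities $c(D)=0$ and $r(D)=r(X)-1$ first, then deduce the bigness of $TX$ from Proposition \ref{p.Intro-criterion} using these identities together with Knop's dimension formula.

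For the first part, I would begin with the complexity. Since $X$ is spherical, a Borel subgroup $B\subseteq G$ has a dense orbit in $X$. The $G$-stable divisor $D$ is in particular $B$-stable, and it is a standard result in Luna--Vust theory that every $G$-stable (and in fact every $B$-stable) subvariety of a spherical variety is again spherical; this gives $c(D)=0$, matching the inequality $c(D)\leq c(X)$ from \cite[Theorem 5.7]{Timashev2011}. For the rank, I would invoke the description of $G$-stable prime divisors in the Luna--Vust theory: each such divisor $D$ is associated to a $G$-invariant discrete valuation $\nu_D$ on the function field $\bC(X)$, and the image $\rho(D)\in \bX^{\vee}_{\bQ}$ is a nonzero primitive element of the valuation cone. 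The rank of the induced lattice on $D$ is precisely the quotient of $\bX(X)$ by the free abelian subgroup generated by the weight of an equation of $D$, so $r(D)=r(X)-1$ (see \cite[Proposition~21.2 and Corollary~21.4]{Timashev2011}). Combined with Theorem \ref{t.KnopdimensionofMomentmap}, this gives
\[
\dim(\cM_D^G)=2(\dim X-1)-2\cdot 0-(r(X)-1)=2\dim X - r(X)-1=\dim(\cM_X^G)-1.
\]

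For the second part, I would set $D\coloneqq X\setminus O$. Since $O$ is $G$-stable, its complement $D$ is $G$-stable; since $O$ is affine and dense, Proposition \ref{p.Goodman-affinity} implies that $D$ has pure codimension one and that the divisor $\sO_X(D)$ is big. Writing $D_{\red}=D_1+\cdots+D_r$ as a sum of $G$-stable prime divisors, I would apply Lemma \ref{l.Gstable} (or rather, the first part just proved) to each $D_i$ to obtain $\dim(\cM_{D_i}^G)<\dim(\cM_X^G)$, and hence
\[
\dim\bigl(\Phi_X^G(T^*X|_{\Supp(D)})\bigr)=\max_i \dim(\cM_{D_i}^G)<\dim(\cM_X^G).
\]
By Lemma \ref{l.coefficient-big-divisors}, $D_{\red}$ is big because $D$ is. Proposition \ref{p.Intro-criterion} applied to $A=D_{\red}$ then yields the bigness of $TX$.

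The main obstacle is verifying $r(D)=r(X)-1$; this is not formally stated in the excerpt but is a classical fact from the Luna--Vust theory of spherical embeddings, and I would cite it rather than reprove it. Once this numerical input is in place, Knop's formula converts everything into a dimension drop for the moment map image, and the rest of the argument is a routine application of the criterion in Proposition \ref{p.Intro-criterion}.
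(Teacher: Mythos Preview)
Your proposal is correct and follows essentially the same route as the paper: establish $c(D)=0$ and $r(D)=r(X)-1$ via Luna--Vust theory, then feed these into Knop's dimension formula and Proposition~\ref{p.Intro-criterion}. The only cosmetic difference is the reference for the rank drop: the paper argues via the colored cone $\cC_Y^{\vee}(X)$ of the open $G$-orbit $Y\subseteq D$, observing that $\cV_Y(X)=\{D\}$ and $\cD_Y(X)=\emptyset$ force this cone to be one-dimensional, and then cites \cite[Proposition~15.14]{Timashev2011} for $r(D)=r(X)-\dim(\cC_Y^{\vee})$; you instead invoke the valuation-theoretic description and cite different sections of the same book. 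Both are valid paths to the same classical fact.
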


\begin{proof}
	Since $X$ is a spherical $G$-variety, we have $c(D)\leq c(X)=0$. On the other hand, let $Y$ be the unique open $G$-orbit in $D$. Then one can easily obtain that $\cV_{Y}(X)=\{D\}$ and $\cD_{Y}(X)=\emptyset$. This implies that the cone $\cC_{Y}^{\vee}(X)\subseteq \bX_{\bQ}^{\vee}$ is $1$-dimensional. In particular, by \cite[Proposition 15.14]{Timashev2011}, we obtain 
	\[
	r(D)=r(Y)=r(X)-\dim(\cC_{Y}^{\vee})=r(X)-1.
	\]
	Now we assume that there exists a $G$-stable affine open subset $O$ of $X$. By Proposition \ref{p.Goodman-affinity}, the complement $D\coloneqq X\setminus O$ is a big divisor on $X$. Then the bigness of $TX$ follows from Proposition \ref{p.Intro-criterion} and Lemma \ref{l.Gstable}.
\end{proof}

\begin{cor}
	\label{c.symmetric-Picard-number-one}
	Let $G$ be a connected reductive linear algebraic group and let $X$ be a smooth projective spherical $G$-variety. Then $TX$ is big if one of the following holds.
	\begin{enumerate}
		\item The variety $X$ is a symmetric variety.
		
		\item The variety $X$ has Picard number $1$ and contains a $G$-stable prime divisor.
	\end{enumerate}
\end{cor}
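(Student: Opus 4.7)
The plan is to reduce both statements to finding a $G$-stable affine open subset of $X$ and then invoking the last assertion of Proposition \ref{p.G-spherical}. Once such an open set is produced, Proposition \ref{p.G-spherical} immediately gives the bigness of $TX$, so the whole task is to locate the right open piece in each case.

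For part (1), I would take the open $G$-orbit $O = G/H$ of the symmetric variety itself. By definition, $G^\theta \subseteq H \subseteq N_G(G^\theta)$ for an involution $\theta$ of the connected reductive group $G$. A classical result says that $G^\theta$ is reductive, and since the index $[H:G^\theta]$ is finite, $H$ is reductive as well. Matsushima's criterion then tells us that $G/H$ is affine. As a $G$-orbit, $G/H$ is obviously $G$-stable, so it provides the desired $G$-stable affine open subset of $X$.

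For part (2), let $D\subseteq X$ be a $G$-stable prime divisor. Since the Picard number of $X$ equals one and $D$ is a nonzero effective divisor, $D$ is numerically proportional, with positive coefficient, to the ample generator of $N^1(X)_{\bR}$; hence $D$ is ample. A standard consequence of ampleness (complement of the support of an ample effective divisor is affine) gives that $O := X\setminus D$ is affine. Being the complement of a $G$-stable subset, $O$ is itself $G$-stable, and we may conclude again by Proposition \ref{p.G-spherical}.

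The only genuine input beyond Proposition \ref{p.G-spherical} and standard ampleness facts is the reductivity of $H$ in the symmetric case, which I expect to be the sole ``non-trivial'' step; however, this is a classical result, so I do not anticipate any real obstacle. The entire proof should amount to a short two-paragraph deduction from Proposition \ref{p.G-spherical}, citing Matsushima's criterion in the first case and the ampleness/affineness equivalence in the second.
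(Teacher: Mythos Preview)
Your proposal is correct and follows essentially the same route as the paper's proof. In both cases the paper exhibits a $G$-stable affine open subset and then invokes Proposition~\ref{p.G-spherical}: for part~(1) the paper cites Steinberg to conclude that $H$ is reductive (hence $G/H$ is affine, which is exactly your Matsushima step, packaged in the paper as Lemma~\ref{l.criteria-bigness-boundary-divisor}), and for part~(2) the paper uses the same ampleness/affineness argument you give. One small remark: your assertion that $[H:G^{\theta}]$ is finite relies on $N_G(G^{\theta})/G^{\theta}$ being finite, which does hold here since $G$ is semisimple in the paper's definition of symmetric variety, but you may want to cite this explicitly (or simply cite Steinberg directly for the reductivity of $H$, as the paper does).
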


\begin{proof}
	Firstly we assume that $X$ is a symmetric variety and denote by $O$ the unique open $G$-orbit of $X$. Then $O$ is isomorphic to a symmetric homogeneous variety $G/H$. On the other hand, thanks to \cite[\S\,8]{Steinberg1968}, the subgroup $H$ is reductive. Therefore, by Lemma \ref{l.criteria-bigness-boundary-divisor}, the boundary divisor $D\coloneqq X\setminus O$ is a big divisor and it follows from Proposition \ref{p.G-spherical} above that $TX$ is big.
	
	Next we assume that $X$ has Picard number $1$ and there exists a $G$-stable prime divisor $D$ in $X$. Then $D$ is ample and it is well known that $O\coloneqq X\setminus D$ is an affine variety. Hence it follows again from Proposition \ref{p.G-spherical} that $TX$ is big.
\end{proof}

\begin{rem}
	\label{r.Classification-Symmetric-varieties}
	The smooth projective symmetric varieties with Picard number $1$ are classified by Ruzzi in \cite{Ruzzi2010} and there are exactly six non-homogeneous ones, including the Cayley Grassmannian $CG$, the double Cayley Grassmannian $DG$, a general hyperplane section of $\Gr_{\omega}(\bA^3,\bA^6)$, where $\bA$ is a complex composition algebra. In particular, by Semicontinuity Theorem, the tangent bundle of any smooth hyperplane section of $\Gr_{\omega}(\bA^3,\bA^6)$ is big. 
\end{rem}

\subsubsection{Horospherical varieties}

Let $G$ be a connected reductive linear algebraic group. A closed subgroup $H$ of $G$ is said to be \emph{horospherical} if it contains the unipotent radical of a Borel subgroup of $G$. In this case we shall say that the homogeneous space $G/H$ is \emph{horospherical}. Denote by $P$ the normaliser $N_G(H)$ of $H$ in $G$. Then $P$ is a parabolic subgroup of $G$ such that $P/H$ is a torus and $G/H$ is a torus bundle over the flag variety $G/P$. A normal $G$-variety is said to be \emph{a horospherical variety} if $G$ has an open orbit isomorphic to $G/H$ for some horospherical subgroup $H$. Horospherical varieties are spherical and their ranks are equal to the rank of the torus $P/H$.

\begin{prop}
	\label{p.horospherical}
	Let $G$ be a connected reductive linear algebraic group and let $X$ be a smooth projective horospherical $G$-variety. Then $TX$ is big.
\end{prop}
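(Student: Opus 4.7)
The plan is to apply Proposition~\ref{p.Intro-criterion} with $A$ an effective big divisor supported on the $G$-stable prime divisors of $X$.

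First, $X$ is spherical: since a horospherical $H$ contains the unipotent radical of some Borel subgroup $B\subseteq G$, the Bruhat decomposition forces $B$ to act on $G/H$ with finitely many orbits, giving $c(X)=0$. By Proposition~\ref{p.G-spherical}, every $G$-stable prime divisor $D\subseteq X$ satisfies $c(D)=0$ and $r(D)=r(X)-1$, and Knop's formula (Theorem~\ref{t.KnopdimensionofMomentmap}) then yields $\dim \cM_D^G = \dim \cM_X^G-1$. Hence for any effective divisor $A$ supported in the union of the $G$-stable prime divisors of $X$ one has $\dim \cM_X^G(\Supp A)<\dim \cM_X^G$, so bigness of $TX$ will follow from Proposition~\ref{p.Intro-criterion} as soon as such an $A$ is chosen to be big.

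When $H=P$ is parabolic, $X=G/P$ is a rational homogeneous space with $r(X)=0$; by Knop's formula the projectivised moment map $\bar\Phi_X^G$ is generically finite onto its image and $TX$ is big directly, as recalled in Example~\ref{e.Examples-Spherical-Varieties}, without needing Proposition~\ref{p.Intro-criterion}. Otherwise I would construct the required big $G$-stable divisor by invoking the Luna--Vust description of horospherical embeddings: the defining feature of the horospherical setting is that the valuation cone of $G/H$ coincides with the entire ambient space $\bX_{\bQ}^{\vee}$, which allows one to select, in the complete colored fan of $X$, a maximal colored cone $(\sigma,\mathcal{F})$ whose associated simple $G$-stable open subset $X_{\sigma,\mathcal{F}}\subseteq X$ is affine. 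Proposition~\ref{p.Goodman-affinity} then gives that the complement $A=X\setminus X_{\sigma,\mathcal{F}}$ is a big $G$-stable divisor, and the proof concludes.

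The main obstacle is this last step: not every simple embedding in the fan of a projective horospherical variety is affine (for instance $\mathrm{Bl}_0\bP^2$ viewed as an $\SL_2$-horospherical variety admits no $G$-stable affine open subset, and even the reduced boundary has self-intersection zero on $\mathbb{F}_1$), so one must invoke the precise Luna--Vust affinity criterion to identify the right colored cone, or, failing that, first pass to a $G$-equivariant toroidal resolution $\widetilde X\to X$ and build a big positive $\bQ$-combination of the $G$-stable prime divisors using the toric structure on the fibers of the induced morphism $\widetilde X\to G/P$ before descending to $X$.
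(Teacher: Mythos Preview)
Your strategy via Proposition~\ref{p.Intro-criterion} and the treatment of the $G$-stable prime divisors are correct. The gap you identify is real: the open $G$-orbit $G/H$ is not affine in general (already for $\SL_2/U\cong\bA^2\setminus\{0\}$), so Proposition~\ref{p.Goodman-affinity} does not apply to the $G$-boundary, and your suggested fixes via the Luna--Vust affinity criterion or a toroidal resolution are not carried out. (Incidentally, in your $\bF_1$ example the divisor $\ell$ alone is already big, so that example only shows that no $G$-stable affine chart exists, not that there is no big $G$-stable divisor; whether such a divisor always exists for smooth projective horospherical $X$ with $H\neq P$ is a separate question you would still have to settle.)

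The paper sidesteps this entirely by replacing $G$ with a Borel subgroup $B$. Since $B$ is solvable, the open $B$-orbit is affine and its complement is automatically a big divisor (Lemma~\ref{l.criteria-bigness-boundary-divisor}). Its support consists of the $G$-stable prime divisors, which you have already handled, together with the colors, i.e.\ the $B$-stable but not $G$-stable prime divisors. For a color $D$, the paper observes that $D\cap(G/H)$ is the preimage of a Schubert divisor $D'\subseteq G/P$ under the torus bundle $G/H\to G/P$, and then uses the factorization of the moment map through the normalizer $P=N_G(H)$ (Lemma~\ref{l.orbit-normaliser}):
\[
T^*(G/H)=G\ast_H\fh^{\perp}\longrightarrow G\ast_P\fh^{\perp}\xrightarrow{\ \varphi\ }\fg^*.
\]
By Knop's formula $\varphi$ is generically finite onto $\cM_X^G$, so the closure of $\Phi_X^G(T^*X|_D)$ has dimension at most $\dim(G\ast_P\fh^{\perp})-1<\dim\cM_X^G$. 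This disposes of the colors directly and finishes the proof with no fan combinatorics. The idea missing from your proposal is precisely this: control the colors via the factorization through $P$, rather than trying to avoid them.
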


\begin{proof}
	Let $D$ be a $G$-stable prime divisor in $X$. As shown in the proof of Proposition \ref{p.G-spherical}, we have $r(D)=r(X)-1$ and, by Lemma \ref{l.Gstable}, we obtain $\dim(\cM_X^G(D))<\dim(\cM_X^G)$.
	Let $O=G/H$ be the unique open $G$-orbit in $X$ with $H$ a horospherical subgroup of $G$. Denote by $P=N_G(H)$ the normaliser of $H$ in $G$. Then, by Lemma \ref{l.orbit-normaliser}, the restriction of the moment map $\Phi_X^G:T^*X\rightarrow \fg^*$ to $O$ factors as
	\[
	\begin{tikzcd}[column sep=large]
		G \ast_H \fh^{\perp} \ar[r,"{\pi_A}"]
		    & G\ast_P \fh^{\perp} \ar[r,"{\varphi}"]
		        & \fg^*.
	\end{tikzcd}
	\]
	Moreover, it is known that $\varphi$ is generically finite onto its image $\cM_X^G$ (cf. Theorem \ref{t.KnopdimensionofMomentmap}). Let $D$ be a $B$-stable but not $G$-stable prime divisor in $X$. Then $D\cap O\not=\emptyset$ and $D\cap O$ is the inverse image by the torus fibration $G/H\rightarrow G/P$ of a Schubert divisor $D'$ of the flag variety $G/P$. As a consequence, the Zariski closure of the image $\Phi_X^G(T^*X|_D)$ is equal to the Zariski closure of the image $\varphi(p^{-1}(D'))$, where $p:G\ast_P \fh^{\perp}\rightarrow G/P$ is the natural projection. However, as $\varphi$ is generically finite onto $\cM_X^G$, we get
	\[
	\dim(\varphi(p^{-1}(D')))<\dim(G\ast_P \fh^{\perp})=\dim(\cM_X^G).
	\]
	As a consequence, our argument above shows that for every $B$-stable prime divisor $D$ in $X$, we have always $\dim(\cM_X^G(D))<\dim(\cM_X^G)$. On the other hand, let $O_B$ be the open $B$-orbit of $X$. Then $O_B$ is an affine variety and the complement $D\coloneqq X\setminus O_B$ is big divisor by Lemma \ref{l.criteria-bigness-boundary-divisor}. Then it follows from Proposition \ref{p.Intro-criterion} that $TX$ is big.
\end{proof}

\begin{rem}
	\label{r.Examples-Horospherical}
	Smooth projective horospherical varieties with Picard number $1$ are classified by Pasquier in \cite{Pasquier2009}. With the same notations as in \cite{Pasquier2009}, there are five classes of non-homogeneous ones, including $X^1(m)=(B_m,\omega_{m-1},\omega_m)$ $(m\geq 3)$, $X^2=(B_3,\omega_1,\omega_3)$, $X^3(m,i)=(C_m,\omega_i,\omega_{i+1})$ $(m\geq 2, 1\leq i\leq m-1)$, $X^4=(F_4,\omega_2,\omega_3)$ and $X^5=(G_2,\omega_2,\omega_1)$.
\end{rem}

\subsubsection{Quasi-homogeneous $G$-varieties with $G$ commutative}

The following result confirms the bigness of the tangent bundles of equivariant compactifications of connected commutative linear algebraic groups.

\begin{prop}
	\label{p.commutative-groups}
	Let $G$ be a connected commutative linear algebraic group and let $X$ be a smooth projective $G$-variety with an open $G$-orbit $O$. Then $TX$ is big. In particular, the tangent bundle of an equivariant compactification of $G$ is big.
\end{prop}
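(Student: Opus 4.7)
The plan is to apply Proposition \ref{p.Intro-criterion} with the effective divisor taken to be the reduced boundary $D \coloneqq X \setminus O$. Because $G$ is commutative and hence solvable, Lemma \ref{l.criteria-bigness-boundary-divisor} gives immediately that $D$ is a big divisor, so the task reduces to verifying
\[
\dim\bigl(\Phi_X^G(T^*X|_D)\bigr) \;<\; \dim(\cM_X^G).
\]

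The crucial feature of the commutative setting is that it linearises the whole picture. Let $H \coloneqq G_x$ be the stabilizer at a point $x \in O$. Since $G$ is commutative, $H$ is normal in $G$, it fixes $O \cong G/H$ pointwise, and hence, by closedness of the fixed locus, it fixes every point of $X$. Because the coadjoint action of $G$ on $\fg^*$ is trivial, for each $y \in X$ the restriction $\Phi_X^G|_{T_y^*X}$ coincides with the composition $T_y^*X \twoheadrightarrow T_y^*(Gy) = \fg_y^\perp \hookrightarrow \fg^*$, and its image is exactly the linear subspace $\fg_y^\perp \subseteq \fg^*$. Combined with the inclusion $\fh \subseteq \fg_y$ for every $y \in X$, this yields $\cM_X^G = \fh^\perp$, a linear subspace of dimension $n \coloneqq \dim X$.

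Because $G$ is connected, the boundary $D$ is a finite union of $G$-stable prime divisors $D = \bigcup_i D_i$. For each $D_i$, let $H_{D_i}$ be the generic stabilizer, i.e., the common value of $G_y$ on a dense open subset $U_i \subseteq D_i$; then $H_{D_i}$ fixes $U_i$ pointwise and, by closedness of its fixed locus, fixes all of $\overline{U_i} = D_i$. Consequently $\fh_{D_i} \subseteq \fg_y$, and hence $\fg_y^\perp \subseteq \fh_{D_i}^\perp$, for every $y \in D_i$, giving
\[
\Phi_X^G(T^*X|_{D_i}) \;\subseteq\; \fh_{D_i}^\perp.
\]
The dimension of this linear subspace equals that of a generic $G$-orbit in $D_i$, which is at most $\dim D_i = n - 1 < n = \dim \cM_X^G$. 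Taking the union over the finitely many $D_i$ yields $\dim\Phi_X^G(T^*X|_D) < \dim\cM_X^G$, and Proposition \ref{p.Intro-criterion} then concludes. The case of an equivariant compactification of $G$ corresponds to $H = \{e\}$.

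The one delicate point to get right is the linearisation step: the containment $\Phi_X^G(T^*X|_{D_i}) \subseteq \fh_{D_i}^\perp$ relies essentially on the triviality of the coadjoint action (for a non-commutative $G$ one would only obtain a containment in the $G$-saturation of $\fh_{D_i}^\perp$), and the extension of the inclusion $\fh_{D_i} \subseteq \fg_y$ from generic $y$ to \emph{all} $y \in D_i$ uses closedness of fixed-point loci. Once these are in place the remainder is a short dimension count.
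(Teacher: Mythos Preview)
Your overall strategy coincides with the paper's---use solvability to get bigness of $D=X\setminus O$, establish $\dim\cM_X^G=n$, bound $\dim\cM_X^G(D_i)\le n-1$ on each boundary component, then apply Proposition~\ref{p.Intro-criterion}---and your identification $\cM_X^G=\fh^\perp$ is correct (indeed more explicit than what the paper records via Lemma~\ref{l.orbit-normaliser}). The problem is in the boundary step.

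You assert that $G_y$ takes a \emph{common value} $H_{D_i}$ on a dense open of $D_i$. Commutativity only guarantees that stabilizers are constant along each individual orbit (conjugation being trivial); it does not force stabilizers on different orbits to coincide, and in general they do not. For instance, $\bG_a^2$ acting on $\bA^2$ by $(s,t)\cdot(x,y)=(x+s+ty,\,y)$ has $G_{(x_0,y_0)}=\{(-ty_0,t):t\in\bC\}$, which varies with $y_0$; the same happens on the exceptional divisor in Example~\ref{e.Examples-EC-structure} whenever $S$ is not linear (the isotropy over $s\in S$ is essentially the affine cone on $\textbf{T}_sS$). Thus there need be no single linear space $\fh_{D_i}^\perp$ containing $\Phi_X^G(T^*X|_{D_i})$, and your fixed-locus argument breaks down. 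The paper avoids this by invoking Lemma~\ref{l.normaliser-divisor} with $N_G(H)=G$, giving $\dim\cM_X^G(D_i)\le\dim X-1$ directly. To repair your explicit approach, note that on the locus $U_i\subseteq D_i$ of maximal orbit dimension $d$ the assignment $y\mapsto\fg_y^\perp\in\Gr(d,\fg^*)$ is constant along $G$-orbits, so its image has dimension at most $\dim D_i-d$; hence $\bigcup_{y\in U_i}\fg_y^\perp$ has dimension at most $(\dim D_i-d)+d=n-1$, and since $\cM_X^G(D_i)$ is the closure of this irreducible set the bound follows.
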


\begin{proof}
	Let $O$ be the unique open $G$-orbit in $X$ and let $D\coloneqq X\setminus O$ be the complement of $O$. Since $G$ is solvable, by Lemma \ref{l.criteria-bigness-boundary-divisor}, the divisor $D$ is big. Moreover, as $G$ is commutative, for any subgroup $H$, we have always $N_G(H)=G$. In particular, Lemma \ref{l.orbit-normaliser} implies that $\dim(\cM_X^G)\leq \dim(X)$. On the other hand, as $G$ has an open orbit $O$ in $X$, we must have $\dim(\cM_X^G)\geq \dim(O)=\dim(X)$. Hence, we obtain $\dim(\cM_X^G)=\dim(X)$. Let $D_i$ be an irreducible component of $D$. As $G$ is commutative, Lemma \ref{l.normaliser-divisor} yields
	\[
	\dim(\cM_X^G(D_i))=\dim(\cM_{D_i}^G)\leq \dim(X)-1<\dim(\cM_X^G).
	\]
	Hence, the tangent bundle $TX$ is big by Proposition \ref{p.Intro-criterion}.
\end{proof}

\begin{proof}[Proof of Theorem \ref{t.Examples-main-thm}]
	It follows from Proposition \ref{p.G-spherical}, Corollary \ref{c.symmetric-Picard-number-one}, Proposition \ref{p.horospherical} and Proposition \ref{p.commutative-groups}.
\end{proof}

\begin{rem}
	Recall that a connected commutative linear algebraic group is known to be isomorphic to $\bG_m^r\times \bG_a^s$ with some non-negative integers $r$ and $s$. 
	\begin{enumerate}
		\item If $s=0$, then $G=\bG_m^r$ is a torus and an equivariant compactification of $G$ is a toric variety. In particular, our result above recovers the bigness of tangent bundles of smooth projective toric varieties \cite[Corollary 1.3]{Hsiao2015}.
		
		\item If $r=0$, then $G=\bG_a^s$ is a vector group and the equivariant compactifications of vector groups are studied actively during the past decades. A full classification of all Fano threefolds admitting an equivariant compactification structure of the vector group $\bG_a^3$ is given in \cite[Main Theorem]{HuangMontero2020}, including $14$ toric ones and $5$ non-toric ones. In particular, this allows us to give a different proof of the bigness of the tangent bundles of the Fano threefolds \textnumero~$28$, \textnumero~$30$ and \textnumero~$31$ in \cite[Table 1]{KimKimLee2022}, which are proved there using total dual VMRT. In higher dimension, a classification of Fano manifolds admitting an equivariant compactification structure of vector groups is available only for varieties with high index (see \cite{FuMontero2019}, Example \ref{e.Examples-EC-Picard-number-one} and Example \ref{e.Examples-EC-structure}). The equivariant compactifications of vector groups with Picard number $1$ are of special interests and we will discuss them in details in the next section.
	\end{enumerate}
\end{rem}

\section{Equivariant compactification of vector groups}

In this section, we will investigate Fano manifolds with Picard number $1$ which is an equivariant compactification of a vector group $\bG_a^n$. The study of equivariant compactification of vector groups is started in \cite{HassettTschinkel1999}, where a classification of them in dimension $3$ and of Picard number $1$ is obtained. Nevertheless, it seems difficult to obtain a full classification in higher dimension, see \cite{FuHwang2014,FuMontero2019,FuHwang2020} for more details. The main goal of this section is to show that the image of the projectivised moment map $\bar{\Phi}_X^G$ along the boundary divisor of an equivariant compactification of a vector group is projectively equivalent to the dual variety of its VMRT. In particular, this allows us to relate the criterion for the bigness of tangent bundles given in Proposition \ref{p.Intro-criterion} via moment map to the previous approach to the bigness of tangent bundles via total dual VMRT initiated in \cite{HoeringLiuShao2020} (see also \cite{FuLiu2021,HoeringLiu2021,KimKimLee2022} and Theorem \ref{t.bigness-total-dual-VMRT} below).

\subsection{VMRT and its dual variety}

\label{subsection.VMRTdual}

Let $X$ be a uniruled projective manifold. An irreducible component $\cK$ of the space of rational curves on $X$ is called a \emph{minimal rational component} if the subscheme $\cK_x$ of $\cK$ parametrising curves passing through a general point $x\in X$ is non-empty and proper. Curves parametrised by $\cK$ will be called \emph{minimal rational curves}. Let $q:\cU\rightarrow \cK$ be the universal family and by $\mu:\cU\rightarrow X$ the evaluation map. The tangent map $\tau:\cU\dashrightarrow \bP(TX)$ is defined by 
\[
\tau(u)=[T_{\mu(u)}\mu(q^{-1}q(u))]\in \bP(T_{\mu(x)}X).
\]
The closure $\cC\subseteq \bP(TX)$ of its image is the \emph{total variety of minimal rational tangents} (total VMRT for short) of $X$. The projection $\cC\rightarrow X$ is a proper and surjective morphism, and a general fibre $\cC_x\subseteq \bP(T_x X)$ is called the \emph{variety of minimal rational tangents} (VMRT for short) of $X$ at the point $x\in X$.  A general minimal rational curve $l$ passing through a general point $x$ is standard; that is, if $f:\bP^1\rightarrow l$ is the normalisation, we have
\[
f^*TX \cong \sO_{\bP^1}(2)\oplus \sO_{\bP^1} (1)^{\oplus p} \oplus \sO_{\bP^1}^{\oplus (n-p-1)},
\]
where $p=\dim(\cC_x)$. Moreover, the projectivised tangent space $\textbf{T}_{[T_x l]} \cC_x$ of $\cC_x$ at $[T_x l]$ is the linear subspace of $\bP(T_x X)$ corresponding to the positive factors of $f^*TX$ at $x\in l$.

Let $Z\subseteq \bP(V)$ be a projective variety. The \emph{dual variety} $\check{Z}\subseteq \bP(V^*)$ is defined as the Zariski closure of the set of hyperplanes in $\bP(V)$ which are tangent to $Z$ at some smooth point. The \emph{dual defect} of $Z$ is $\codim(\check{Z})-1$ and $Z$ is called \emph{dual defective} if its dual variety $\check{Z}$ is not a hypersurface. The \emph{codegree} of $Z$ is defined as the degree of $\check{Z}$. The \emph{total dual VMRT} $\check{\cC}\subseteq \bP(T^*_x X)$ of a uniruled projective manifold $X$ is defined as the closure of the union of dual varieties $\check{\cC}_x\subseteq \bP(T^*_x X)$ of the VMRT $\cC_x\subseteq \bP(T_x X)$ at general points $x\in X$. The importance of the total dual VMRT in the study of the bigness of tangent bundles is illustrated in the following theorem.

\begin{thm}[\protect{\cite[Theorem 3.4]{FuLiu2021} and \cite[Proposition 5.8]{HoeringLiu2021}}]
	\label{t.bigness-total-dual-VMRT}
	Let $X$ be a Fano manifold with Picard number $1$ and denote by $H$ the ample generator of $\Pic(X)$. Assume that the VMRT of $X$ at a general point is not dual defective and denote by $a\in \bZ_{>0}$, $b\in\bZ$ the unique integers such that 
	\[
	[\check{\cC}]\equiv a\Lambda + b\pi^*H,
	\]
	where $\Lambda$ is the tautological divisor class of $\bP(T^* X)$ and $\pi:\bP(T^*X)\rightarrow X$ is the natural projection. Then $TX$ is big if and only if $b<0$.
\end{thm}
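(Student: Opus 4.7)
I would attack both directions via Lemma~\ref{l.criterionbigness} combined with the geometry of $\check{\cC}$ along standard minimal rational curves. The sufficient direction is straightforward: if $b<0$, then $\check{\cC}$ represents $a\Lambda-|b|\pi^*H$ as an effective divisor; since $X$ has Picard number one and $H$ is ample, $|b|H$ is big on $X$, so Lemma~\ref{l.criterionbigness}\ref{l.criterionbigness(3)} immediately gives bigness of $TX$.

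For the necessary direction the key auxiliary objects are \emph{null sections} lifting minimal rational curves. Let $\ell\subseteq X$ be a general standard minimal rational curve, $f:\bP^1\rightarrow \ell$ its normalisation, so that $f^*T^*X\cong \sO_{\bP^1}(-2)\oplus \sO_{\bP^1}(-1)^{\oplus p}\oplus \sO_{\bP^1}^{\oplus (n-p-1)}$. Any sub-line bundle $\sO_{\bP^1}\hookrightarrow \sO_{\bP^1}^{\oplus (n-p-1)}\hookrightarrow f^*T^*X$ gives a section $s:\bP^1\rightarrow \bP(T^*X|_\ell)$ with $\pi\circ s=f$ and $s^*\Lambda\cong \sO_{\bP^1}$; in particular $\Lambda\cdot s(\ell)=0$ and $\pi^*H\cdot s(\ell)=H\cdot \ell>0$. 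The next point is to show that these null sections dominate $\check{\cC}$: the fibre of $\bP(\sO_{\bP^1}^{\oplus (n-p-1)})\subseteq \bP(f^*T^*X)$ at $x\in\ell$ is exactly the $(n-p-2)$-dimensional linear subspace of $\bP(T^*_xX)$ parametrising tangent hyperplanes to $\cC_x$ at $[T_x\ell]$; as $[T_x\ell]$ runs over $\cC_x$ this sweeps out $\check{\cC}_x$, and the dimension count $\dim\cK=n+p-1$, plus $n-p-2$ parameters for the sub-line bundle, plus the $1$-dimensional $\ell$, matches $\dim\check{\cC}=2n-2$ exactly.

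Now assume $TX$ is big. By Lemma~\ref{l.criterionbigness}\ref{l.criterionbigness(2)} and clearing denominators, one gets an effective integral divisor $E$ of class $m\Lambda-r\pi^*H$ for some integers $m,r>0$ (using $\rho(X)=1$ to write any big $A\equiv rH$, $r>0$). Decompose $E=c\check{\cC}+D''$ as effective Weil divisors with $c\geq 0$ and $D''$ not having $\check{\cC}$ as an irreducible component. For a general null section, the dominating property combined with $s(\ell)\subseteq \check{\cC}$ implies $s(\ell)\not\subseteq D''$, whence $D''\cdot s(\ell)\geq 0$. Intersecting both sides of $E=c\check{\cC}+D''$ with $s(\ell)$ gives
\[
-r(H\cdot \ell)\;=\;E\cdot s(\ell)\;=\;c\,\check{\cC}\cdot s(\ell)+D''\cdot s(\ell)\;=\;cb(H\cdot \ell)+D''\cdot s(\ell),
\]
which forces $cb<0$, so $c>0$ and $b<0$.

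The main technical obstacle I expect is the sweeping statement that null sections dominate $\check{\cC}$. It rests on identifying the affine tangent space to $\cC_x$ at $[T_x\ell]$ with the positive part of $f^*TX$ at $x$, so that its annihilator in $T^*_xX$ is precisely the trivial summand $\sO^{\oplus (n-p-1)}$, together with a dimension count leaving no room to spare. Once this is in place, the rest of the argument is numerical bookkeeping driven by Lemma~\ref{l.criterionbigness}.
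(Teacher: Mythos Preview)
The paper does not give its own proof of this theorem: it is quoted with attribution to \cite[Theorem 3.4]{FuLiu2021} and \cite[Proposition 5.8]{HoeringLiu2021}, and no argument is supplied. So there is nothing in the paper to compare your proof against line by line.

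That said, your proposal is correct and is essentially the standard argument used in those references. The key technical input---that $\check{\cC}$ is dominated by the ``null'' (minimal) sections over standard minimal rational curves, i.e.\ sections with $\Lambda$-degree $0$---is exactly the fact the paper itself invokes later (in the proof of Lemma~\ref{l.inclusion-dual-VMRT}) as \cite[Proposition 2.9]{HoeringLiuShao2020}. Your identification of the fibre of the trivial summand with the set of hyperplanes tangent to $\cC_x$ at $[T_x\ell]$, together with the dimension count, is precisely how that proposition is proved. One small remark on your write-up: in the step ``$cb<0$, so $c>0$ and $b<0$'' it is cleaner to first rule out $c=0$ separately (if $c=0$ then $D''\cdot s(\ell)=-r(H\cdot\ell)<0$, contradicting effectivity of $D''$ and generality of $s(\ell)$), and then conclude $b<0$ from $c>0$ and $cb<0$.
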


The following result suggests that there may exist some interesting relations between the criterion given in Proposition \ref{p.Intro-criterion} via moment map and that given in Theorem \ref{t.bigness-total-dual-VMRT} via total dual VMRT.

\begin{lem}
	\label{l.inclusion-dual-VMRT}
	Let $G$ be a connected algebraic group and let $X$ be a smooth projective uniruled $G$-variety. Fix a minimal rational component $\cK$ on $X$ with total dual VMRT $\check{\cC}\subseteq \bP(T^*X)$. Then for any reduced big divisor $D$ in $X$ we have
	\[
	\bar{\Phi}_X^G(\check{\cC})\subseteq \bar{\Phi}_X^G(\bP(T^*X|_D))=\bP(\cM_X^G(D)).
	\]
\end{lem}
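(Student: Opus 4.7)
My plan is to unwind the definitions for the equality and then establish the inclusion via a propagation argument along minimal rational curves. The equality $\bar{\Phi}_X^G(\bP(T^*X|_D)) = \bP(\cM_X^G(D))$ is immediate from the definition $\cM_X^G(D) \coloneqq \overline{\Phi_X^G(T^*X|_D)}$ and the $\bC^*$-equivariance of $\Phi_X^G$. For the inclusion, the right-hand side is closed in $\bP(\fg^*)$ (being the closure of the image of a projective variety under a rational map), so it suffices to verify the inclusion on a Zariski dense subset of $\check{\cC}$.

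I would realise this dense subset using standard minimal rational curves. Pick a general $x \in X$ and a general $l \in \cK_x$ with normalisation $f\colon \bP^1 \to l$, so that
\[
f^*TX \cong \sO_{\bP^1}(2) \oplus \sO_{\bP^1}(1)^{\oplus p} \oplus \sO_{\bP^1}^{\oplus (n-p-1)},
\]
where $n = \dim X$ and $p = \dim \cC_x$. Writing $P \subseteq f^*TX$ for the positive part (of rank $p+1$) and $Q = f^*TX/P \cong \sO_{\bP^1}^{\oplus(n-p-1)}$, the dualisation yields a trivial sub-bundle $Q^* \subseteq f^*T^*X$ equal to the annihilator of $P$. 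Since $\mathbf{T}_{[T_x l]}\cC_x = \bP(P_x)$, every $[w] \in \bP(Q^*_x)$ corresponds to a hyperplane tangent to $\cC_x$ at the smooth point $[T_x l]$, giving $\bP(Q^*_x) \subseteq \check{\cC}_x$. Allowing $(x, l)$ to vary, these linear spaces sweep out a dense subset of $\check{\cC}$.

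Now fix $[w] \in \bP(Q^*_x)$ and let $\sigma \in H^0(l, Q^*)$ be the unique constant section (up to scale) with $\sigma(x) = w$; existence and uniqueness use that evaluation at $x$ is an isomorphism for the trivial bundle $Q^*$. The key claim is that $y \mapsto \bar{\Phi}_X^G([\sigma(y)])$ is constant on $l$. For any $\xi \in \fg$, the image $\overline{\xi_*|_l}$ of the velocity field $\xi_*$ in $Q$ is a section of the trivial bundle $Q$ on $\bP^1$, hence constant; and since $\sigma$ annihilates $P$, the Hamiltonian evaluates as
\[
H_\xi(\sigma(y)) = \langle \sigma(y), \xi_*(y)\rangle = \langle \sigma(y), \overline{\xi_*|_l}(y)\rangle,
\]
which is the pairing of two constant sections of the dual trivial bundles $Q^*$ and $Q$ on $\bP^1$, hence constant in $y$. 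Varying $\xi \in \fg$, the whole covector $\Phi_X^G(\sigma(y)) \in \fg^*$ is independent of $y$, proving the claim. Finally, since $\cK$ is a covering family and $D$ is big, $l \cdot D > 0$ for general $l \in \cK$, so $l \cap D \neq \emptyset$; choosing $y \in l \cap D$ gives $[\sigma(y)] \in \bP(T^*X|_D)$ and $\bar{\Phi}_X^G([w]) = \bar{\Phi}_X^G([\sigma(y)]) \in \bar{\Phi}_X^G(\bP(T^*X|_D))$. The main subtlety lies in the key claim: recognising that $H_\xi$ factors through the natural pairing $Q^* \otimes Q \to \sO_{\bP^1}$ and that both $Q$ and $Q^*$ are trivial on $\bP^1$, so that the parallel transport given by the trivialisation is compatible with the moment map.
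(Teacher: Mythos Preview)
Your proof is correct and takes a genuinely different route from the paper's. The paper argues indirectly: assuming $\cM_X^G(D)\subsetneq\cM_X^G$, it picks an arbitrary hypersurface $\cH\subset\bP(\fg^*)$ containing $\bP(\cM_X^G(D))$ but not $\bP(\cM_X^G)$, so that the associated divisor $D_\xi\in|m\Lambda|$ decomposes as $D_{\cH}+\pi^*D'$ with $D'\geq D$ big. Since $\check{\cC}$ is swept out by minimal sections $\bar C$ over standard curves satisfying $\Lambda\cdot\bar C=0$ (this is quoted from \cite[Proposition~2.9]{HoeringLiuShao2020}), one gets $D_{\cH}\cdot\bar C=-\pi^*D'\cdot\bar C<0$, forcing $\check{\cC}\subseteq\supp(D_{\cH})$ and hence $\bar\Phi_X^G(\check{\cC})\subseteq\cH$. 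As $\cH$ was arbitrary, the inclusion follows.

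Your argument is more direct: you identify those minimal sections explicitly as the constant sections $\sigma$ of the trivial subbundle $Q^*\subset f^*T^*X$ and show by an explicit computation that $\Phi_X^G\circ\sigma$ is \emph{constant} along $l$ (because the Hamiltonian pairing factors through $Q^*\otimes Q\to\sO_{\bP^1}$, a pairing of global sections of trivial bundles). This explains \emph{why} the minimal sections are contracted by $\bar\Phi_X^G$, whereas the paper only deduces their containment in $D_{\cH}$ from a numerical contradiction. Your approach is self-contained and avoids both the case distinction and the external citation; the paper's approach has the advantage of tying the lemma into the machinery of the divisors $D_{\cH}$ that governs $\Eff(\bP(T^*X))$ elsewhere in the article.
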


\begin{proof}
	The inclusion is clear if $\cM_X^G(D)=\cM_X^G$. Thus we may assume that $\cM_X^G(D)\not=\cM_X^G$. Let $\cH\subseteq \bP(\fg^*)$ be an arbitrary reduced (maybe reducible) hypersurface of degree $m$ containing $\bP(\cM_X^G(D))$, but not containing $\bP(\cM_X^G)$. Consider the divisor $D_{\cH}\subseteq \bP(T^*X)$. Then there exists an effective big divisor $D'\geq D$ such that $D_{\cH}+\pi^*D'\sim m\Lambda$, where $\Lambda$ is the tautological divisor of $\pi:\bP(T^*X)\rightarrow X$. On the other hand, note that $\check{\cC}$ is dominated by minimal sections $\bar{C}$ over standard rational curves in $\cK$ (cf. \cite[Proposition 2.9]{HoeringLiuShao2020}), which have $\Lambda$-degree $0$. This implies that the restriction $D_{\cH}|_{\check{\cC}}$ is not pseudoeffective since $D'$ is big and $\pi^*D'\cdot \bar{C}>0$. In particular, the total dual VMRT $\check{\cC}$ is contained in the support of the divisor $D_{\cH}$. By the construction of $D_{\cH}$, we must have $\bar{\Phi}_X^G(D_{\cH})\subseteq \cH$ and therefore $\bar{\Phi}_X^G(\check{\cC})\subseteq \cH$. As $\cH$ is an arbitrary reduced hypersurface containing $\bP(\cM_X^G(D))$, it follows that $\bar{\Phi}_X^G(\check{\cC})\subseteq \bP(\cM_X^G(D))$.
\end{proof}

\begin{rem}
	The assumption on the bigness of $D$ can not be removed, see Example \ref{e.Examples-EC-structure} below. Moreover, the following inclusion in general is strict:
	\[
	\bar{\Phi}_X^G(\check{\cC}) \subseteq \bigcap_{D\ \text{effective big divisor}} \bP(\cM_X^G(D)).
	\]
	Let $X=G/P$ be a rational homogeneous space of Picard number $1$. Then the moment map $\Phi_X^G:T^*X\rightarrow \fg^*$ is a generically finite dominant map to its image $\cM_X^G$, which is the closure of a nilpotent orbit. Moreover, the projectivised moment map $\bar{\Phi}_X^G:\bP(T^*X)\rightarrow \bP(\cM_X^G)$ is everywhere well-defined. Let $\bP(T^*X)\xrightarrow{\varepsilon} \bP(\widetilde{\cM}_X^G) \rightarrow \bP(\cM_X^G)$ be the Stein factorisation. Then the total dual VMRT $\check{\cC}$ is contained in the exceptional locus $E$ of $\varepsilon$ and one can obtain (see \cite[\S\,5]{FuLiu2021} for more details):
	\[
	 \bigcap_{D\ \text{effective ample divisor}} \bP(\cM_X^G(D)) = \bar{\Phi}_X^G(E).
	\]
	Nevertheless, if $E$ is a divisor and $\check{\cC}$ is not a divisor, then $\bar{\Phi}_X^G(\check{\cC})$ is a proper subvariety of $\bar{\Phi}_X^G(E)$, see \cite[Definition 5.6 and Table 2]{FuLiu2021}.
\end{rem}

\subsection{Geometry of equivariant compactifications}

In this subsection, we collect some basic facts about equivariant compactifications of vector groups. Recall that for a smooth projective variety $X$, an \emph{EC-structure} on $X$ is an algebraic action $\bG_a^n\times X\rightarrow X$ which makes $X$ an equivariant compactification of $\bG_a^n$.

\begin{prop}[\protect{\cite[Proposition 5.4]{FuHwang2020}}]
	\label{p.factsofEC}
	Let $X$ be a Fano manifold of Picard number $1$ which is an equivariant compactification of $\bG_a^n$. Denote by $D$ the complement of the unique open $\bG_a^n$-orbit $O\subseteq X$. Let $\cK$ be a covering family of minimal rational curves on $X$ and denote by $\cC\subseteq \bP(TX)$ its total VMRT. Then the following statements hold.
	\begin{enumerate}
		\item The closed subvariety $D$ is an irreducible divisor such that $\Pic(X)\cong \bZ D$.
		
		\item If the points in $D$ are fixed by $\bG_a^n$, then $X$ is isomorphic to $\bP^n$.
		
		\item For any point $x\in O$, the VMRT $\cC_x\subseteq \bP(T_x X)$ is irreducible and is independent of $x$ up to projective equivalence.
		
		\item If the VMRT is smooth, then a member $C$ of $\cK_x$, for $x\in O$, is the closure of the image of a $1$-dimensional subspace in $\bG_a^n$ and $D\cdot C=1$.
	\end{enumerate}
\end{prop}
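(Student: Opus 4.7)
The overall strategy is to exploit four structural features of the setup: $G = \bG_a^n$ is connected, commutative, and unipotent; the open orbit $O \cong G$ is affine with complement $D$; and $\rho(X) = 1$. I would handle the four parts in order, following the spirit of the arguments in \cite{HassettTschinkel1999} and \cite{FuHwang2020}.

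For (1), I would first invoke Theorem~\ref{t.pseffconeGvariety} to see that $\Eff(X)$ is generated by $G$-stable prime divisors, and use $\rho(X) = 1$ to conclude that each such prime is a positive rational multiple of the ample generator $H$. By Lemma~\ref{l.criteria-bigness-boundary-divisor}, $D$ is a big $G$-stable divisor. Irreducibility of $D$ follows from analysing $G$-invariants: unipotency of $G$ gives a nonzero $G$-invariant section of $\sO_X(m_0 H)$ for some minimal $m_0$, and combining the resulting $G$-invariant effective divisor with the surjection $\bigoplus_i \bZ D_i \twoheadrightarrow \Pic(X)$ coming from $\Pic(O) = 0$ (since $O \cong \bA^n$) forces $D$ to consist of a single prime with $D \sim H$.

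For (2), I would analyse the $G$-action on a formal neighbourhood of a point $p \in D$: unipotency, triviality on $T_p D$, and the open-orbit property pin down the local model to the standard translation action on $\bA^n$, with the normal bundle $N_{D/X}$ trivialised by a constant translation vector field. The Hassett--Tschinkel correspondence \cite{HassettTschinkel1999} then identifies $X$ globally with $\bP^n$ carrying its standard EC-structure. For (3), projective equivalence of $\cC_x$ for $x \in O$ is immediate from the transitive $G$-action on $O$, which induces isomorphisms $dg_x : T_x X \to T_{gx} X$ sending $\cC_x$ to $\cC_{gx}$; irreducibility of $\cC_x$ at a general $x$ is the well-known result for Fano manifolds of Picard number one, and it propagates to every point of $O$ by the same $G$-action.

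For (4), assume $\cC_x$ is smooth. The orbit map at $x$ gives an isomorphism $\fg \cong T_x O$, and any tangent direction $v \in T_x O$ exponentiates to a $1$-parameter subgroup $\Gamma_v \coloneqq \exp(\bC \cdot v) \subseteq G$ whose orbit $\Gamma_v \cdot x$ is a rational curve in $O$, with closure $\overline{\Gamma_v \cdot x}$ tangent to $[v]$ at $x$. Whenever $[v] \in \cC_x$, smoothness of $\cC_x$ ensures that a minimal rational curve through $x$ is uniquely determined by its tangent direction, so any $C \in \cK_x$ tangent to $[v]$ must equal $\overline{\Gamma_v \cdot x}$. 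Since $\Gamma_v \cdot x \cong \bC$ is a single affine orbit, $C \setminus O$ consists of a single point, and standardness of $C$ together with smoothness of $\cC_x$ forces this intersection to be transverse, giving $D \cdot C = 1$. The main obstacle I anticipate is the step in (4) upgrading the infinitesimal observation (every $[v] \in T_x X$ gives an orbit curve) to the global identification of the minimal rational curve with its orbit closure; this genuinely requires both the smoothness of $\cC_x$ and the standardness of a general minimal rational curve, and is the one place where the hypothesis of smoothness of $\cC_x$ is used in an essential way.
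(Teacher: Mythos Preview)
Your treatment of (1) and (3) is fine and matches the paper's approach (which simply cites \cite{HassettTschinkel1999} and \cite{FuHwang2014} respectively, together with the $G$-invariance of the total VMRT).

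For (2), however, your proposed route is both more complicated and, as written, does not work. The Hassett--Tschinkel correspondence classifies EC-structures \emph{on} $\bP^n$ via commutative local algebras; it does not tell you that a given $X$ is $\bP^n$. Your local-model analysis along $D$ would still leave you with the global identification to carry out, and there is no evident way to extract that from \cite{HassettTschinkel1999}. The paper's argument is a one-liner of a completely different flavour: if every point of $D$ is fixed, then every velocity field $H_\xi$ ($\xi\in\fg$) vanishes identically along $D$, so $H^0(X,T_X\otimes\sO_X(-D))\neq 0$; since $D$ is ample, Wahl's theorem \cite{Wahl1983} forces $X\cong\bP^n$. This is the key idea you are missing for (2).

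For (4), your sketch has a genuine gap. You write that ``smoothness of $\cC_x$ ensures that a minimal rational curve through $x$ is uniquely determined by its tangent direction, so any $C\in\cK_x$ tangent to $[v]$ must equal $\overline{\Gamma_v\cdot x}$.'' But uniqueness among members of $\cK_x$ does not let you compare $C$ with $\overline{\Gamma_v\cdot x}$ unless you already know the latter lies in $\cK_x$; an arbitrary curve through $x$ with tangent in $\cC_x$ need not be minimal. One has to argue that the $\Gamma_v$-action preserves $C$ (using that the $G$-action permutes members of $\cK$ and that the translated curves share the same tangent direction along the orbit), which is more delicate than what you wrote. The paper does not spell this out either and simply invokes \cite[Proposition~5.4]{FuHwang2020}, so you are not expected to reprove it here, but you should be aware that your uniqueness step as stated does not close the argument.
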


\begin{proof}
	The first statement is proved in \cite[Theorem 2.5]{HassettTschinkel1999}. For the second statement, for every $\xi \in \fg$, the velocity field $H_{\xi}\in H^0(X,TX)$ vanishes identically along $D$ by  Lemma  \ref{l.relationvanishinginclusion}. In particular, we have $H^0(X,TX\otimes \sO_X(-D))\not=\emptyset$. As $D$ is ample, it follows from \cite{Wahl1983} (see 
	also \cite{Liu2019}) that $X$ is isomorphic to $\bP^n$.	The first part of the third statement is proved in \cite[Proposition 2.2]{FuHwang2014} and the second part follows from the fact that the total VMRT $\cC\subseteq \bP(TX)$ is preserved by the natural action of $\bG_a^n$ on $\bP(TX)$.  The last statement follows from \cite[Proposition 5.4]{FuHwang2020}.
\end{proof}

Let $Z\subseteq \bP(V)$ be a nondegenerate submanifold and let $W\subseteq V$ be a subspace such that $\bP(W)\subseteq Z$. Denote by $(V/W)^*\subset V^*$ the set of linear functionals on $V$ annihilating $W$ such that $\bP((V/W)^*)$ parametrises the set of hyperplanes in $\bP(V)$ containing $\bP(W)$. Then a general member of $\bP((V/W)^*)$ is called a \emph{$\bP(W)$-general} hyperplane in $\bP(V)$. More generally, a linear subspace of codimension $k$ in $\bP(V)$ is \emph{$\bP(W)$-general} if it is defined by a general member of $\Gr(k,(V/W)^*)$.

\begin{example}
	\label{e.Examples-EC-Picard-number-one}
	Up to our knowledge, the known examples of Fano manifolds with Picard number $1$ which are equvariant compactifications of vector groups are as follows: 
	\begin{itemize}
		\item the irreducible Hermitian symmetric spaces;
		\item the odd Lagrangian Grassmannians $X^3(m,m-1)$ $(m\geq 2)$ (cf. Remark \ref{r.Examples-Horospherical});
		\item a smooth linear section $V_k$ of $\Gr(2,5)\subseteq \bP^9$ with codimension $k\leq 2$;
		\item a smooth $\bP^4$-general linear section $S^a_k$ of $\bS_5\subseteq \bP^{15}$ with codimension $k\leq 3$.
	\end{itemize}
\end{example}

The following example shows that there are many smooth equivariant compactifications of vector groups with higher Picard number. 
\begin{example}[\protect{\cite[Example 2.2]{FuHwang2020}}]
	\label{e.Examples-EC-structure}
	Let $[x_0:\dots:x_{n}]$ be the homogeneous coordinates of the $n$-dimensional projective space $\bP^n$. Let $H=\bP^{n-1}\subseteq \bP^n$ be the hyperplane defined by the equation $x_0=0$. Then there is a natural EC-structure $\Psi:\bG_a^n\times \bP^n\rightarrow \bP^n$ on $\bP^n$ with the unique open orbit $\bP^n\setminus H$. More precisely, for a point $\textbf{y}=(y_1,\dots,y_n)\in \bG_a^n$, we define an automorphism $\Psi_{\textbf{y}}:\bP^n\rightarrow \bP^n$ as follows:
	\[
	[x_0:x_1:\dots:x_n] \mapsto [x_0:x_1+y_1 x_0:\dots:x_n+y_n x_0].
	\]
	Clearly this gives a EC-structure on $\bP^n$ such that the induced action on the hyperplane $H$ is trivial. Let $S\subseteq H$ be a smooth irreducible projective variety, and let 
	\[
	\nu:Z\coloneqq\text{Bl}_S\bP^n\rightarrow \bP^n
	\]
	be the blowing-up of $\bP^n$ along $S$ with exceptional divisor $E=\bP(N_{S/\bP^n})$. Then the EC-structure $\Psi$ on $\bP^n$ can be naturally lifted to be EC-structure $\Psi_Z$ on $Z$ such that $\mu$ is equivariant. Denote by $W\subseteq \bP(N_{S/\bP^n})$ the subvariety $\bP(N_{S/H})$ of $E$. Then it is clear that the induced action of $\Phi_Z$ on $W$ is trivial and for each point $s\in S$, the fibre $E_s$ of $E\rightarrow S$ over $s$ is invariant such that $\Phi_Z$ is transitive over the open subset $E_s\setminus W_s$, where $W_s$ is the fibre of $W\rightarrow S$ over $s$.
	
	Denote by $\widetilde{H}$ the strict transform of $H$ in $Z$. Then the induced $\bG_a^n$-action on $\widetilde{H}$ is trivial. In particular, the image $\Phi_Z^{\bG_a^n}(T^*Z|_{\widetilde{H}})$ is the origin $0\in \fg^*$. Let $\cK$ be the irreducible component of the space of rational curves in $Z$ parametrising the strict transforms of lines in $\bP^n$ meeting $S$. Then $\cK$ is a minimal rational component on $Z$ such that its VMRT is projectively equivalent to $S\subseteq \bP^{n-1}$. For any point $z\in Z\setminus(\widetilde{H}\cup E)$, we have $\Phi_Z^{\bG_a^n}(T^*_z Z)=\fg^*$. Since the members in $\cK$ have $\widetilde{H}$-degree $0$, the divisor $\widetilde{H}$ is not big. This shows that the assumption on the bigness of $D$ in Lemma \ref{l.inclusion-dual-VMRT} can not be removed.
\end{example}

The proof of the following result is communicated to me by Baohua Fu.

\begin{prop}
	\label{p.boundary-divisor-EC}
	Let $X$ be a Fano manifold of Picard number $1$ which is an equivariant compactification of $\bG_a^n$, different from projective spaces. Denote by $D\subseteq X$ the boundary divisor. Assume that there exists a covering family $\cK$ of minimal rational curves on $X$ such that its VMRT $\cC_x\subseteq \bP(T_x X)$ at a general point $x\in X$ is projectively equivalent to a smooth projective variety $S\subseteq \bP^{n-1}$. Let $\pi:\Bl_S(\bP^n)\rightarrow \bP^n$ be the blow-up of $S\subseteq \bP^{n-1}=H\subseteq \bP^{n}$. Denote by $\widetilde{H}$ the strict transform of $H$ and by $E$ the exceptional divisor of $\pi$. Then there exist $\bG_a^n$-stable proper subvarieties $D_0\subseteq D$ and $E_0\subseteq E$ such that there exists a $\bG_a^n$-equivariant isomorphism 
	\[
	\Phi:X\setminus D_0\rightarrow \Bl_S(\bP^n)\setminus (\widetilde{H}\cup E_0).
	\]
\end{prop}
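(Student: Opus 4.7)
The plan is to construct a $\bG_a^n$-equivariant birational map $\Phi : X \dashrightarrow \Bl_S(\bP^n)$ extending the identification of the open orbits, and then to use one-parameter subgroups of $\bG_a^n$ — which give rise to both minimal rational curves in $X$ and lines in $\bP^n$ — to pin down the failure locus of $\Phi$. First, I would fix base points $x_0 \in O$ and $\bar x_0 \in \bP^n \setminus H$, so that the two orbit maps provide $\bG_a^n$-equivariant isomorphisms $O \simeq \bG_a^n \simeq \bP^n \setminus H$; composing with the open inclusion $\bP^n \setminus H \hookrightarrow \Bl_S(\bP^n)$ produces $\Phi$. Differentiating at $x_0$ and $\bar x_0$ identifies $T_{x_0}X \simeq \fg \simeq T_{\bar x_0}\bA^n$, and one may arrange that $\cC_{x_0} \subseteq \bP(T_{x_0}X)$ corresponds to the prescribed $S \subseteq \bP^{n-1} = H$. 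Since $X$ is smooth and $\Bl_S(\bP^n)$ is projective, the indeterminacy locus $\mathrm{Ind}(\Phi)$ has codimension at least two in $X$, so $\mathrm{Ind}(\Phi) \cap D$ is automatically a proper $\bG_a^n$-stable closed subvariety of $D$.

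The heart of the argument is to show that a dense open part of $D \setminus \mathrm{Ind}(\Phi)$ maps into $E$ rather than into $\widetilde H$. For each $[v] \in S$, the one-parameter subgroup $\gamma_v(t) = tv$ of $\bG_a^n$ gives a minimal rational curve $C_v^X = \overline{\gamma_v \cdot x_0}$ in $X$ meeting $D$ transversely at a single point $q_v$ by Proposition \ref{p.factsofEC}; on the other side, $\gamma_v \cdot \bar x_0$ closes up to a line $\ell_v \subseteq \bP^n$ meeting $H$ exactly at $[v] \in S$, whose strict transform $\tilde \ell_v$ in $\Bl_S(\bP^n)$ is a $\bP^1$ meeting $E$ at a single point $\bar q_v$ and disjoint from $\widetilde H$. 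Because $\Phi$ agrees with both parametrisations on $\gamma_v \cdot x_0$, the rational map $\Phi|_{C_v^X}$ from a smooth projective curve extends uniquely and sends $q_v$ to $\bar q_v \in E \setminus \widetilde H$. By $\bG_a^n$-equivariance the same holds along every translate $y \cdot C_v^X$. Defining
\[
D_0 := \bigl(\mathrm{Ind}(\Phi) \cup \Phi^{-1}(\widetilde H)\bigr) \cap D ,
\]
this is a $\bG_a^n$-stable closed subvariety of $D$ that misses every $q_v$, hence is proper in $D$, and $\Phi$ induces a $\bG_a^n$-equivariant morphism $X \setminus D_0 \to \Bl_S(\bP^n) \setminus \widetilde H$.

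Running the entirely symmetric argument on $\Bl_S(\bP^n)$ — whose minimal rational component consisting of strict transforms of lines through $S$ has VMRT projectively equivalent to $S$ — produces a proper $\bG_a^n$-stable $E_0 \subsetneq E$ such that $\Phi^{-1}$ extends to a $\bG_a^n$-equivariant morphism $\Bl_S(\bP^n) \setminus (\widetilde H \cup E_0) \to X \setminus D_0$. The necessity of removing $\widetilde H$ is visible in Example \ref{e.Examples-EC-structure}: $\bG_a^n$ acts trivially on $\widetilde H$, while Proposition \ref{p.factsofEC} together with the hypothesis $X \ne \bP^n$ forces $D$ not to be pointwise fixed, so no point of $\widetilde H$ may lie in $\Phi(X \setminus D_0)$. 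Mutual inverseness of $\Phi$ and $\Phi^{-1}$ on the dense subset $O \simeq \bP^n \setminus H$ propagates to the whole common domain, giving the desired $\bG_a^n$-equivariant isomorphism.

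The main technical obstacle I anticipate is the symmetric half of the argument: one must check that the minimal rational curves on $\Bl_S(\bP^n)$ (strict transforms of lines meeting $S$) really do form a covering family whose boundary points sweep out, under the $\bG_a^n$-action, a dense open subset of $E$ not contained in the preimage of $\mathrm{Ind}(\Phi^{-1}) \cup \Phi(D)$. Provided this holds — and it should be extractable from the explicit description of the $\bG_a^n$-action on $E$ in Example \ref{e.Examples-EC-structure} together with the standardness of the minimal rational curves on either side — the enlarged $E_0$ remains a proper $\bG_a^n$-stable subvariety of $E$, and the isomorphism drops out.
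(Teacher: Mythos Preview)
Your approach is essentially the paper's: identify the open orbits equivariantly with the VMRTs matched (the paper invokes \cite[Proposition~2.4]{FuHwang2014} for this, but your observation that a group automorphism of $\bG_a^n$, i.e.\ an element of $\GL_n$, realises the projective equivalence is exactly the content), then use minimal rational curves to see that $\Phi$ is a local isomorphism at general points of $D$, and finally exclude $\widetilde H$ via the $X\not\cong\bP^n$ hypothesis and Proposition~\ref{p.factsofEC}. The paper phrases the middle step as a dimension argument (if $\Phi(D)$ had codimension $\geq 2$, every general member of $\cK'$ would meet it, which is absurd), whereas you track individual limit points $q_v\mapsto\bar q_v$ along one-parameter subgroups; these are two ways of saying the same thing.

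One point to tighten: extending $\Phi|_{C_v^X}$ along the curve does \emph{not} by itself show that the rational map $\Phi$ on $X$ is defined at $q_v$ --- limits along different curves through an indeterminacy point can disagree --- so your claim ``$D_0$ misses every $q_v$'' is not justified as written. What rescues it (and what the paper uses) is that a \emph{general} member of $\cK$ is disjoint from the codimension-$\geq 2$ set $\mathrm{Ind}(\Phi)$; hence for general $([v],y)$ the translate $y\cdot C_v^X$ avoids $\mathrm{Ind}(\Phi)$, so $y\cdot q_v\notin\mathrm{Ind}(\Phi)$ and then indeed $\Phi(y\cdot q_v)=y\cdot\bar q_v\in E\setminus\widetilde H$, giving $D_0\subsetneq D$. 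For the final isomorphism it is cleanest to take $D_0$ (resp.\ $E_0$) to be the complement in $D$ (resp.\ $E$) of the isomorphism locus of $\Phi$, rather than iterating enlargements as you sketch.
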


\begin{proof}
	Let $\cK'$ be the covering family of minimal rational curves on $\Bl_S(\bP^n)$ parametrising the strict transform of lines in $\bP^n$ meeting $S$. Denote by $O$ and $O'$ the open $\bG_a^n$-orbits of $X$ and $\Bl_S(\bP^n)$, respectively. Fix two points $o\in O$ and $o'\in O'$. Denote by $\nu_o:\bG_a^n\rightarrow O$ and $\nu_{o'}:\bG_a^n\rightarrow O'$ the orbit maps, respectively. For a general point $x'\in \Bl_S(\bP^n)$, we denote by $\cC'_{x'}\subseteq \bP(T_{x'}\Bl_S(\bP^n))$ the VMRT of $\cK'$ at $x'$. Note that the VMRT of $\cK$ at a point $x\in O'$ is projectively equivalent to $S\subseteq \bP^{n-1}$. In particular, applying \cite[Proposition 2.4]{FuHwang2014} shows that there exists a group automorphism $F$ of $\bG_a^n$ such that the biholomorphic map $\varphi:O\rightarrow O'$ defined by $\varphi\coloneqq \nu_{o'}\circ F \circ \nu_o^{-1}$ satisfying:
	\begin{enumerate}
		\item $\varphi(o)=o'$;
		
		\item \label{s.Eequivariantness} $\varphi(g\cdot o)=F(g)\cdot \varphi(o')$ for any $g\in \bG_a^n$; and
		
		\item \label{s.preseveringVMRT} the differential map $d\varphi:\bP(TO) \rightarrow \bP(TO')$ sends $\cC_x$ to $\cC'_{\varphi(x)}$ for all $x\in O$.
	\end{enumerate}
    The last statement \ref{s.preseveringVMRT} implies that general members in $\cK$ are sent to general members in $\cK'$ by $\varphi$. Denote by $\Phi:X\dashrightarrow \Bl_S(\bP^n)$ the rational map defined by $\varphi$. Let $D_0\subseteq D$ be the closed subvariety such that $\Phi$ is an isomorphism over $X\setminus D_0$. We note that $\Phi(D)$ is a divisor in $\Bl_S(\bP^n)$. In fact, a general minimal rational curve in $\cK$ is disjoint from the indeterminacy locus of $\Phi$ and it meets $D$ as $X$ has Picard number $1$. Thus, if $\Phi(D)$ has codimension $2$ in $\Bl_S(\bP^n)$, then every minimal rational curves in $\cK'$ passes through the codimension $2$ subvariety $\Phi(D)$, which is impossible. Thus $\Phi(D)$ is a divisor and this yields that the map $\Phi$ is a local isomorphism at general points of $D$. As a consequence, the closed subvariety $D_0$ is a proper subvariety of $D$ and hence has codimension at least $2$ in $X$ as $D$ is irreducible. On the other hand, the statement \ref{s.Eequivariantness} shows that the rational map $\Phi$ is $\bG_a^n$-equivariant and it follows that $D_0$ is $\bG_a^n$-stable.
    
    Next we consider the inverse map $\Phi^{-1}:\Bl_S(\bP^n)\dashrightarrow X$. Note that the points in the prime divisor $\widetilde{H}$ are fixed by $\bG_a^n$. We claim that $\Phi(\widetilde{H})$ has codimension at least $2$ in $X$. Otherwise, we must have $\Phi(\widetilde{H})=D$. In particular, the points in $D$ are fixed by $\bG_a^n$ and $X$ is isomorphic to the projective space $\bP^n$ by Proposition \ref{p.factsofEC}, which contradicts our assumption. Hence, the divisor $\widetilde{H}$ is contracted by $\Phi^{-1}$ and we have $\Phi^{-1}(E)=D$. In particular, the map $\Phi^{-1}$ is a local isomorphism at general points of $E$ and consequently there exists a closed proper $\bG_a^n$-stable subvariety $E_0$ of $E$ such that $\Phi^{-1}$ is an isomorphism over the Zariski open subset $\Bl_S(\bP^n)\setminus (\widetilde{H}\cup E_0)$.
\end{proof}

\subsection{Pseudoeffective cone of $\bP(T^*X)$}

In this subsection, we will finish the proof of the first statement of Theorem \ref{t.EC-main-thm}. Let $G$ be a connected linear algebraic group and let $X$ be an equivariant compactification of $G$. Fix a point $o$ in the unique open orbit. We define
\[
\fD\coloneqq\{\Delta\subseteq\bP(T^*X)\,|\, \Delta\ \text{is a}\ G\text{-stable}\ \pi\text{-horizontal prime divisor}\}
\]
and
\[
\fH\coloneqq\{\cH\subseteq \bP(T^*_o X)\,|\,\cH\ \text{is a reduced but maybe reducible hyperpsurface}\}.
\]
We can naturally identify $\fg^*$ to $T^*_o X$ via the cotangent map of the orbit map $\mu_o:G\rightarrow Go=O$ at the identity $e\in G$. In particular, we shall also regard the set $\fH$ as the set of reduced but maybe reducible hypersurfaces in $\bP(\fg^*)$. 

\begin{lem}
	\label{l.reconstruction-G-stable}
	Let $\Delta\in \fD$ and let $\cH\in \fH$ be the intersection $\Delta\cap \bP(T^*_o X)$. If $G$ is commutative, then $\Delta=D_{\cH}$.
\end{lem}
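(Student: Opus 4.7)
The plan is to exploit the fact that when $G$ is commutative the cotangent bundle $T^*O$ over the open orbit trivialises in a very explicit way, so that $G$-stable horizontal divisors in $\bP(T^*X)$ are in bijection with hypersurfaces in $\bP(\fg^*)$. The proof then amounts to checking that $\Delta$ and $D_{\cH}$ restrict to the same divisor on the open dense subset $\bP(T^*O) \subseteq \bP(T^*X)$.

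First, I would trivialise $T^*O$. Since $G$ is commutative and $X$ is an equivariant compactification of $G$, the group $G$ acts freely and transitively on $O$, so the orbit map $\mu_o : G \to O$ is an isomorphism. Commutativity makes the coadjoint action of $G$ on $\fg^*$ trivial, so one obtains a $G$-equivariant trivialisation $T^*O \cong O \times \fg^*$ (with $G$ acting by translation on the first factor and trivially on the second), under which the moment map $\Phi_X^G|_{T^*O}$ is the projection to $\fg^*$. Projectivising and identifying $\bP(T^*_o X)$ with $\bP(\fg^*)$ via $(d\mu_o)_e^*$, the rational map $\bar{\Phi}_X^G$ becomes the second projection $O \times \bP(\fg^*) \to \bP(\fg^*)$ on $\bP(T^*O)$.

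Next, I would describe $\Delta \cap \bP(T^*O)$. Since $\Delta$ is an irreducible, $G$-stable, $\pi$-horizontal prime divisor, its restriction to $\bP(T^*O) = O \times \bP(\fg^*)$ is an irreducible divisor invariant under translation by $G$ in the first factor, hence of the form $O \times \cH'$ for some irreducible hypersurface $\cH' \subseteq \bP(\fg^*)$. Taking fibres at $o$ identifies $\cH'$ with $\cH = \Delta \cap \bP(T^*_o X)$; in particular $\cH$ is irreducible and is defined by some irreducible polynomial $\xi \in \Sym^m \fg$ of minimal degree.

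Then I would compute $D_\xi|_{\bP(T^*O)}$. By Lemma \ref{l.relationvanishinginclusion}, the symmetric vector field $H_\xi = \gr\Phi^*(\xi) \in H^0(X, \Sym^m TX)$, viewed as a polynomial function on $T^*X$, is the composition $\xi \circ \Phi_X^G$. Its zero-divisor in $\bP(T^*X)$ is $D_\xi$. Using the trivialisation above, we get $D_\xi|_{\bP(T^*O)} = O \times \cH$ with multiplicity one, since $\xi$ is irreducible and the second projection is smooth so pulls back reduced divisors to reduced divisors. The horizontal part $D_{\cH}$ agrees with $D_\xi$ on $\bP(T^*O)$, because any vertical component of $D_\xi$ is of the form $\pi^{-1}(Z)$ with $Z$ an irreducible divisor on $X$, and the identity $D_\xi|_{\bP(T^*O)} = O \times \cH$ forces $Z \subseteq X \setminus O = D$.

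Finally, both $\Delta$ and $D_{\cH}$ restrict to the divisor $O \times \cH$ on the open dense subset $\bP(T^*O) \subseteq \bP(T^*X)$. A horizontal divisor in $\bP(T^*X)$ is the Zariski closure of its restriction to $\bP(T^*O)$ (each horizontal irreducible component dominates $X$, hence meets $\pi^{-1}(O)$), so $\Delta = D_{\cH}$ as divisors. The only delicate point is verifying that the $G$-equivariant trivialisation of $T^*O$ really does identify $\Phi_X^G|_{T^*O}$ with the second projection and that $D_\xi$ appears there with multiplicity one; once this is carefully checked in local coordinates, the rest is a density/closure argument.
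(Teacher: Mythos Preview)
Your proposal is correct and follows essentially the same approach as the paper: both reduce to showing $\Delta$ and $D_{\cH}$ agree over the dense open $\bP(T^*O)$, and both use commutativity of $G$ to identify the fibres over all points of $O$. The paper phrases this via the commutative diagram $\mu_g\circ\mu_o=\mu_{o'}$ to match fibres one at a time, while you package the same fact as a global $G$-equivariant trivialisation $T^*O\cong O\times\fg^*$ under which $\bar\Phi_X^G$ is the second projection; your added remark that $D_\xi$ restricts with multiplicity one is a welcome bit of extra care, but the argument is the same in substance.
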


\begin{proof}
	Since both $\Delta$ and $D_{\cH}$ are $\pi$-horizontal, it is enough to show that the equality $\Delta=D_{\cH}$ holds over the open subset $\bP(T^*O)$. For an arbitrary point $o'\in O$, there exists a unique element $g\in G$ such that $o'=go$. Moreover, as $\Delta$ is $G$-stable, we must have
	\[
	\Delta\cap \bP(T^*_{o'} X) = d\mu_g|_o(d\mu_o|_e(\cH)),
	\]
	where $d\mu_g$ is the tangent map of the map $\mu_g:X\rightarrow X$, $x\mapsto gx$, at $x$. Consider the following diagram
	\[
	\begin{tikzcd}[column sep=large, row sep=large]
		G \arrow[r,"{\mu_o}"] \arrow[d,"\text{id}" left]
		& Go=O \arrow[d,"{\mu_g}"] \\
		G \arrow[r,"{\mu_{o'}}" below] 
		& Go'=O
	\end{tikzcd}
	\]
	Since $G$ is commutative, for any $g'\in G$, we have
	\[
	\mu_g(\mu_o(g'))=g(g'o)=(g'g)o=g'(go)=\mu_{o'}(g').
	\]
	In particular, the diagram above is commutative. This yields
	\[
	\Delta\cap \bP(T^*_{o'} X) = d\mu_g|_o\circ d\mu_o|_e (\cH) = d\mu_{o'}|_e (\cH) = D_{\cH}\cap \bP(T^*_{o'} X).
	\]
	Thus, we have $\Delta=D_{\cH}$ over $\bP(T^*O)$ and hence $\Delta=D_{\cH}$.
\end{proof}

The first statement in Theorem \ref{t.EC-main-thm} is a special case of the following more general result.

\begin{prop}
	\label{p.Pseff-cone-EC}
	Let $G$ be a connected commutative linear algebraic group, and let $X$ be a smooth equivariant compactification of $G$ with the unique open orbit $O$. Then the pseudoeffective cone $\Eff(\bP(T^*X))$ of $\bP(T^*X)$ is generated by following divisors:
	\begin{enumerate}
		\item the divisors $\pi^*D$, where $\pi:\bP(T^*X)\rightarrow X$ is the natural projection and $D$ is an irreducible component of the complement $X\setminus O$.
		
		\item the prime divisors $D_{\cH}$, where $\cH$ is an irreducible reduced hypersurface in $\bP(\fg^*)$.
	\end{enumerate}
\end{prop}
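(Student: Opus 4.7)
The plan is to apply Brion's theorem (Theorem \ref{t.pseffconeGvariety}) to the smooth projective $G$-variety $\bP(T^*X)$, where $G$ acts through the lifting of its action on $X$ to the cotangent bundle (so that $\pi:\bP(T^*X)\to X$ is $G$-equivariant). Since $G$ is commutative, it is in particular solvable, so Brion's theorem asserts that $\Eff(\bP(T^*X))$ is generated by the classes of $G$-stable prime divisors on $\bP(T^*X)$. It therefore suffices to show that every $G$-stable prime divisor $\Delta\subseteq \bP(T^*X)$ is either of the form $\pi^{-1}(D)$ for an irreducible component $D$ of $X\setminus O$, or of the form $D_{\cH}$ for an irreducible reduced hypersurface $\cH\subseteq \bP(\fg^*)$. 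Conversely, both types of divisors are visibly effective (and hence pseudoeffective), so they do provide generators.

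First I would split the analysis of a $G$-stable prime divisor $\Delta$ according to whether it is $\pi$-vertical or $\pi$-horizontal. In the vertical case, the image $\pi(\Delta)$ is a proper $G$-stable closed subset of $X$; since $\pi$ is a projective bundle of relative dimension $n-1$ and $\Delta$ is an irreducible divisor, $\pi(\Delta)$ must in fact be a $G$-stable prime divisor of $X$, and therefore an irreducible component of the boundary $X\setminus O$ (because $O$ is the unique open $G$-orbit). Then $\Delta=\pi^{-1}(\pi(\Delta))$ is the support of the pullback divisor $\pi^*\pi(\Delta)$, giving type (1).

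In the horizontal case, the intersection $\cH\coloneqq \Delta\cap \bP(T^*_o X)$ is a hypersurface in $\bP(T^*_o X)\cong \bP(\fg^*)$ (where the identification is via the cotangent map of the orbit map at $o\in O$), because $\Delta$ is $\pi$-horizontal and irreducible. Since $G$ is commutative and $\Delta$ is $G$-stable, Lemma \ref{l.reconstruction-G-stable} applies and yields $\Delta=D_{\cH}$. Finally, if $\cH$ decomposed as $\cH_1\cup\cH_2$ with proper reduced subhypersurfaces defined by $\xi_1,\xi_2$, then (using that $D_{\xi_1\xi_2}=D_{\xi_1}+D_{\xi_2}$ and taking horizontal parts) the divisor $D_{\cH}$ would split as $D_{\cH_1}+D_{\cH_2}$, contradicting the primality of $\Delta$. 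Hence $\cH$ is irreducible and reduced, giving type (2).

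The main obstacle I anticipate is the horizontal case: one must verify carefully that Lemma \ref{l.reconstruction-G-stable} indeed forces $\Delta=D_{\cH}$ on the nose (not merely set-theoretically), and that horizontality guarantees $\cH$ is a genuine hypersurface in $\bP(\fg^*)$ rather than something of lower codimension or the whole space. The vertical/horizontal dichotomy and the reduction to irreducible $\cH$ are then formal, and combined with Brion's theorem this completes the description of the extremal rays of $\Eff(\bP(T^*X))$.
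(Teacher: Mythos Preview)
Your proposal is correct and follows essentially the same approach as the paper: apply Brion's theorem to the lifted $G$-action on $\bP(T^*X)$, split $G$-stable prime divisors into $\pi$-vertical and $\pi$-horizontal cases, handle the vertical case directly, and in the horizontal case invoke Lemma \ref{l.reconstruction-G-stable} together with the observation that $D_{\cH_1+\cH_2}=D_{\cH_1}+D_{\cH_2}$ to reduce to irreducible $\cH$. The only cosmetic difference is that the paper phrases the horizontal step as an injection $\fD\hookrightarrow\fH$ before applying the lemma, whereas you go straight to the intersection $\Delta\cap\bP(T^*_oX)$.
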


\begin{proof}
	The action of $G$ on $X$ can be naturally lifted to an action on $\bP(T^*X)$. Thus, according to Theorem \ref{t.pseffconeGvariety}, the pseudoeffective cone of $\bP(T^*X)$ is generated by $G$-stable prime divisors. Let $\Delta$ be a prime $G$-stable prime divisor in $\bP(T^*X)$. Then we have the following possibilities for $\Delta$:
	\begin{itemize}
		\item the divisor $\Delta$ is $\pi$-vertical;
		
		\item the divisor $\Delta$ is $\pi$-horizontal.
	\end{itemize}
    If the prime divisor $\Delta$ is $\pi$-vertical, then there exists a $G$-stable prime divisor $D$ in $X$ such that $\pi^*D=\Delta$ because the projection $\pi$ is $G$-equivariant. This implies that $D$ is an irreducible component of $X\setminus O$. 
   
    Now we assume that $\Delta$ is $\pi$-horizontal. Fix a point $o$ in the open orbit $O$. Taking intersection with $\bP(T^*_o X)$ yields an injection $\fD\rightarrow \fH$. Let $\Delta\in \fD$ be an arbitrary element and let $\cH\in \fH$ be the corresponding hypersurface. Then we have $\Delta=D_{\cH}$ by Lemma \ref{l.reconstruction-G-stable}.  Note that if $\cH=\cH_1+\cH_2$ as divisors in $\bP(\fg^*)$, then clearly we have $D_{\cH}=D_{\cH_1}+D_{\cH_2}$ as divisors in $\bP(T^*X)$. Thus we may assume that the hypersurfaces in $\fH$ are irreducible.
\end{proof}

\begin{rem}
	The commutativity of $G$ is necessary in Proposition \ref{p.Pseff-cone-EC}. In general, if $G$ is not commutative, then Lemma \ref{l.reconstruction-G-stable} is false and it is possible that there are $G$-stable prime divisors in $\bP(T^*X)$ which are not of the form $D_{\cH}$. For example, for the quintic del Pezzo threefold $X=V_3$ (cf. Example \ref{e.V5}), its total dual VMRT $\check{\cC}\subseteq \bP(T^*X)$ is an $\SL_2$-invariant prime divisor such that the intersection $\check{\cC}\cap \bP(T^*_x X)$ for a point $x$ contained in the open orbit is a union of three hyperplanes (see \cite[\S\,5]{HoeringLiuShao2020}). In particular, if $\check{\cC}$ is of the form $D_{\cH}$, then $\cH$ is also a union of three hyperplanes in $\bP(\fg^*)$ and consequently $D_{\cH}$ is a reducible divisor containing three irreducible components, which contradicts the irreducibility of $\check{\cC}$. Actually, the pseudoeffective cone of $\bP(T^*V_3)$ is generated by $\check{\cC}$ and $\pi^*D$, where $D$ is the closure of the unique $2$-dimensional $\SL_2$-orbit.
\end{rem}

\subsection{Pseudoeffective slope}

In this subsection, we will pursue further the study of the pseudoeffective cone of $\bP(T^*X)$ for $X$ being a smooth equivariant compactification with Picard number $1$ of a vector group. 

\begin{defn}[\protect{Pseudoeffective slope of vector bundles}]
	Let $E$ be a vector bundle over a normal projective variety $X$, and let $A$ be a big $\bR$-Cartier divisor on $X$. The pseudoeffective slope of $E$ with respect to $A$ is defined as
	\[
	\mu(E,A)\coloneqq \sup\{\varepsilon\in\bR\,|\,\Lambda-\varepsilon \pi^*A\ \text{is pseudoeffective}\},
	\]
	where $\pi:\bP(E^*)\rightarrow X$ is the natural projection and $\Lambda$ is the tautological divisor of $\bP(E^*)$.
\end{defn}

\begin{rem}
	The invariant $\mu(E,A)$ is also called \emph{pseudoeffective threshold of $E$ with respect to $A$} in \cite{Shao2020,FuLiu2021} and $E$ is big if and only if $\mu(E,A)>0$ for some $A$ on $X$. On the other hand, if $X$ is projective manifold with Picard number $1$, then the pseudoeffective cone of $\bP(T^*X)$ is generated by $\Lambda -\mu(E,A)\pi^*A$ and $\pi^*A$, where $A$ is an ample divisor on $X$.
\end{rem}

\subsubsection{Behavior under deformation}

Let $p:\cX\rightarrow \Delta$ be smooth family of projective manifolds over a disk $\Delta$. By Semicontinuity Theorem, if the tangent bundle of the fibre $\cX_t$ is big for $t\not=0$, then the tangent bundle of the central fibre $\cX_0$ is also big. Thus one may expect to get more examples of Fano manifolds of Picard number $1$ and with big tangent bundle by degenerating known examples. Nevertheless, it turns out that this may be not so successful. Recall that a smooth projective $X$ is \emph{globally rigid} if for any smooth deformation $\cX\rightarrow \Delta$ with $\cX_t\cong X$ for any $t\not=0$, we have $\cX_0\cong X$.
\begin{itemize}
	\item The rational homogeneous spaces of Picard number $1$ are globally rigid except the orthogonal Grassmannian $B_3/P_2=\Gr_q(2,7)$ by a series works of Hwang and Mok \cite[Main Theorem]{HwangMok2005} and the latter one has a degeneration to $X^5$ \cite[Proposition 2.3]{PasquierPerrin2010}.
	
	\item The odd symplectic Grassmannians $X^3(m,i)$ $(m\geq 2, 1\leq i\leq m-1)$ are globally rigid by \cite[Theorem 1.7]{HwangLi2021}.
	
	\item the codimension $k(\leq 3)$ linear section $V_k$ of $\Gr(2,5)\subseteq \bP^9$ and the smooth $\bP^4$-general linear section $S_k^a$ of $\bS_5\subseteq \bP^{15}$ with $1\leq k\leq 3$ are globally rigid by the classification of Fano manifolds with coindex at most $3$.
\end{itemize}

\begin{question}[\protect{\cite{KimPark2019,Manivel2020a}}]
	Let $X$ be either a non-homogeneous smooth projective symmetric variety of Picard number $1$ or a non-homogeneous smooth projective horospherical variety of Picard number $1$, different from the odd symplectic Grassmannians. Is $X$ globally rigid?
\end{question}

 Conversely, if the tangent bundle of $\cX_0$ is big, then it is not clear for us if $T_{\cX_t}$ is big for $t$ small enough and up to our knowledge there are no known counter-examples yet. However, in certain special cases, we can show that the bigness of tangent bundles is preserved under small deformation.

\begin{lem}
	\label{l.defect-deformation}
	Let $E\rightarrow \Delta$ be a vector bundle over the disk $\Delta$ and let $\cS\subseteq \bP(E^*)$ be a smooth family of embedded smooth projective varieties over $\Delta$. Then the codegrees and the dual defects of the fibres $\cS_t\subseteq \bP(E^*_t)$ are independent of $t$.
\end{lem}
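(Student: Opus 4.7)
Plan: The plan is to realise the codegree and dual defect of $\cS_t\subseteq \bP(E_t^*)$ as certain intersection numbers on the relative conormal variety, and to invoke the deformation invariance of such numbers in smooth proper families.

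First, I would construct the relative conormal variety $\mathcal{Z}\coloneqq \text{Con}(\cS/\Delta)\subseteq \bP(E^*)\times_{\Delta}\bP(E)$ by projectivising the relative conormal bundle $N^*_{\cS/\bP(E^*)}$. Because $\cS\to\Delta$ is smooth of relative dimension $s$ with conormal bundle of constant rank $N-s$ (where $N+1=\rank E$), the total space $\mathcal{Z}$ is smooth over $\Delta$ of relative dimension $N-1$, and its fibre over $t$ is the classical conormal variety $\text{Con}(\cS_t)\subseteq \bP(E_t^*)\times\bP(E_t)$. I write $p_1,p_2$ for the natural projections from $\bP(E^*)\times_{\Delta}\bP(E)$ to the two factors and $h$ for the hyperplane class on either $\bP(E_t^*)$ or $\bP(E_t)$.

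Second, for each pair of non-negative integers $(a,b)$ with $a+b=N-1$, I would introduce the fibrewise intersection number
\[
\nu_{a,b}(\cS_t)\coloneqq \int_{\mathcal{Z}_t} p_1^*h^{a}\cdot p_2^*h^{b}.
\]
A projection-formula computation using that $p_2|_{\mathcal{Z}_t}\colon \mathcal{Z}_t\to\check{\cS}_t$ has generic fibre a linear $\bP^{\delta}$ (with $\delta = N-1-\dim\check{\cS}_t$ the dual defect of $\cS_t$, and on which $p_1$ restricts to a linear embedding) yields the classical identities $\nu_{a,b}(\cS_t)=0$ for $b>\dim\check{\cS}_t$, and $\nu_{N-1-\dim\check{\cS}_t,\,\dim\check{\cS}_t}(\cS_t)=\deg\check{\cS}_t$. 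Consequently the dual defect equals $N-1-\max\{b:\nu_{N-1-b,b}(\cS_t)\neq 0\}$ and the codegree equals $\nu_{N-1-\dim\check{\cS}_t,\,\dim\check{\cS}_t}(\cS_t)$, so both are determined by the collection $\{\nu_{a,b}(\cS_t)\}$.

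Third, since $\mathcal{Z}\to\Delta$ is a smooth proper family and the classes $p_1^*h,p_2^*h$ extend to global cohomology classes on the ambient family $\bP(E^*)\times_{\Delta}\bP(E)$, each fibrewise intersection number $\nu_{a,b}(\cS_t)$ is locally constant in $t\in\Delta$ (it is the degree of a flat family of $0$-cycles obtained by intersecting $\mathcal{Z}$ with global divisor classes). It follows that both the dual defect $\delta(\cS_t)$ and the codegree of $\cS_t$ are locally constant, which proves the lemma. The main non-trivial step is the projection-formula computation in the second paragraph, identifying the $\nu_{a,b}$ with the dimension and degree of the dual; once this is in place, the deformation invariance is essentially automatic from the smoothness of the family $\mathcal{Z}\to\Delta$.
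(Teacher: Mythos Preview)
Your argument is correct and follows a genuinely different route from the paper. Both proofs begin by constructing the relative conormal variety $\mathcal{Z}\subseteq \bP(E^*)\times_{\Delta}\bP(E)$, but then diverge: the paper passes to the \emph{image} $\check{\cS}=p_2(\mathcal{Z})\subseteq \bP(E)$, observes that $\check{\cS}\to\Delta$ is flat (being reduced irreducible and dominant over a smooth curve), and concludes that the dimension and degree of the fibres---which are the dual varieties $\check{\cS}_t$---are constant. You, by contrast, never leave the smooth total space $\mathcal{Z}$: you encode both the defect and the codegree via the bidegree numbers $\nu_{a,b}=\int_{\mathcal{Z}_t}p_1^*h^a\cdot p_2^*h^b$ (the classical polar or rank sequence), and then invoke deformation invariance of intersection numbers in a smooth proper family. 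Your route has the advantage of sidestepping any verification that the scheme-theoretic fibres of $\check{\cS}\to\Delta$ are reduced and agree with $\check{\cS}_t$ (a point the paper leaves implicit); the paper's route is shorter and more geometric once one accepts that identification. The one step worth a citation in your write-up is the identification $\nu_{\delta,\,N-1-\delta}=\deg\check{\cS}_t$, which rests on reflexivity and the linearity of the generic contact locus---standard in characteristic~$0$, but not entirely trivial.
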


\begin{proof}
	Let us consider the total conormal variety $\cI\subseteq \bP(E^*)\times_{\Delta} \bP(E)$ of $\cS$; that is, the variety defined as follows:
	\[
	\{(t,s,[H])\,|\,t\in \Delta, s\in \cS_t, [H]\in \bP(E_t)\ \text{is a hyperplane tangent to}\ \cS_t\ \text{at}\ s\}
	\]
	Then the total dual variety $\check{\cS}\subseteq \bP(E)$ is the image of the natural projection $\cI\rightarrow \bP(E)$. Moreover, it is clear that the fibre of $\check{\cS}\rightarrow \Delta$ over $t$ is just the dual variety of the fibre of $\cS\rightarrow \Delta$ over $t$. As $\check{\cS}\rightarrow \Delta$ is flat, the degrees and the dimensions of the fibres of $\check{\cS}\rightarrow \Delta$ are independent of $t$ and so are the codegrees and the dual defects of the fibres of $\cS\rightarrow \Delta$.
\end{proof}

\begin{rem}
	The result is false without the smoothness assumption. This can be shown by considering a family of smooth hypersurfaces in $\bP^n$ degenerating to a dual defective singular hypersurface in $\bP^n$.
\end{rem}

\begin{prop}
	\label{p.bigness-deformation}
	Let $p:\cX\rightarrow \Delta$ be a smooth family of Fano manifolds with Picard number $1$. Let $\cK$ be an irreducible component of the relative Chow variety $\text{Chow}(\cX/\Delta)$ such that $\cK_t$ is a minimal rational component of $\cX_t$ for any $t\in \Delta$. Assume moreover that the VMRT of $\cK_t$ at general points of $\cX_t$ is smooth for every $t\in \Delta$. If the VMRT of $\cX_0$ is not dual defective and $T{\cX_0}$ is big, then $T{\cX_t}$ is big for any $t\in \Delta$ and we have
	\[
	\mu(T{\cX_t},-K_{\cX_t}) = \mu(T{\cX_0},-K_{\cX_0}),\ \forall t\in \Delta.
	\]
\end{prop}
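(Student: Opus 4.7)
My plan is to transport the total dual VMRT through the family and read off both the bigness and the slope from its numerical class.

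\textbf{Step 1: Relative total dual VMRT.} Using the relative universal family $q:\cU\to \cK$ and evaluation $\mu:\cU\to \cX$, I would form the relative tangent map $\tau:\cU\dashrightarrow \bP(T_{\cX/\Delta})$ and take its closure to get a relative total VMRT $\cC\subseteq \bP(T_{\cX/\Delta})$. Its fibre over $t\in\Delta$ is the total VMRT $\cC_t\subseteq \bP(T_{\cX_t})$. Since the VMRT at general points is smooth for every $t$, I obtain a smooth family of embedded projective varieties over $\Delta$ after restricting to generic points of each fibre. Applying Lemma \ref{l.defect-deformation} to this smooth family (parametrised by generic points of $\cX$) shows that both the codegree and the dual defect of $\cC_{x_t}$ are locally constant along $\Delta$. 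Since $\cC_{x_0}\subseteq \bP(T_{x_0}\cX_0)$ is not dual defective, the same holds on every fibre; in particular, the fibrewise dual construction yields a relative total dual VMRT $\check{\cC}\subseteq \bP(T^*_{\cX/\Delta})$ whose fibre over $t$ is the total dual VMRT $\check{\cC}_t\subseteq \bP(T^*\cX_t)$, and each $\check{\cC}_t$ is a (prime) divisor of degree equal to the common codegree $a$ of the VMRT.

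\textbf{Step 2: Constancy of the numerical class.} Since $\cX\to \Delta$ is smooth and $\Delta$ is contractible, the sheaf $R^2\bar{p}_*\bZ$ of $\bar{p}:\bP(T^*_{\cX/\Delta})\to \Delta$ is constant, and the classes $\Lambda$ and $\pi^*H$ restrict fibrewise to $\Lambda_t$ and $\pi_t^*H_t$, where $H_t$ is the ample generator of $\Pic(\cX_t)$ (which exists fibrewise because $\Pic(\cX_t)\cong\bZ$ and the polarisation extends in a smooth Fano family). Write $[\check{\cC}_t]=a_t\Lambda_t+b_t\pi_t^*H_t$. The family $\check{\cC}\to\Delta$ has equidimensional fibres inside the smooth ambient $\bP(T^*_{\cX/\Delta})$, hence is flat; its fibrewise cohomology class is therefore locally constant. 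This forces $a_t\equiv a$ and $b_t\equiv b$ throughout $\Delta$.

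\textbf{Step 3: Bigness via Theorem \ref{t.bigness-total-dual-VMRT}.} Since $T\cX_0$ is big and $\cC_{x_0}$ is smooth and not dual defective, Theorem \ref{t.bigness-total-dual-VMRT} gives $b<0$. By Step 2, $b<0$ for every $t\in\Delta$, and applying Theorem \ref{t.bigness-total-dual-VMRT} in the reverse direction on each fibre (whose VMRT is smooth and not dual defective by Step 1) yields that $T\cX_t$ is big for every $t$.

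\textbf{Step 4: Equality of pseudoeffective slopes.} For a Fano manifold $\cX_t$ of Picard number one, $N^1(\bP(T^*\cX_t))_{\bR}$ has rank two with basis $\Lambda_t,\pi_t^*H_t$. Since $\check{\cC}_t$ is an irreducible divisor with $b<0$ and is non-movable (it is dominated by minimal sections of $\Lambda_t$-degree zero as in the proof of Lemma \ref{l.inclusion-dual-VMRT}), it spans one extremal ray of $\Eff(\bP(T^*\cX_t))$, the other being $\pi_t^*H_t$. A direct computation then gives $\mu(T\cX_t,H_t)=-b/a$, and since the Fano index $\iota$ is a deformation invariant in a smooth family,
\[
\mu(T\cX_t,-K_{\cX_t})=\frac{1}{\iota}\mu(T\cX_t,H_t)=-\frac{b}{a\iota},
\]
which is independent of $t$.

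The main obstacle is Step 2: one must ensure that $\check{\cC}\to\Delta$ is a flat family and that the extension of the relative dual VMRT across the discriminant locus (where the total VMRT $\cC_t$ may fail to be smooth away from generic points of $\cX_t$) does not alter the fibrewise class. I would handle this by working over a Zariski open of $\cX$ where the relative VMRT is smooth, defining $\check{\cC}$ there, and invoking the properness of the total dual VMRT construction plus semicontinuity of fibre dimension (together with the constancy of the codegree $a$) to extend across $\Delta$.
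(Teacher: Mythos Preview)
Your proposal is correct and follows essentially the same route as the paper: both build the relative total dual VMRT inside $\bP(T^*_{\cX/\Delta})$, use Lemma~\ref{l.defect-deformation} (applied along a section through a general point of $\cX_0$) to propagate non-dual-defectiveness, read off the constant numerical class $a\Lambda+b\pi^*(\cdot)$ of $\check{\cC}$, and invoke Theorem~\ref{t.bigness-total-dual-VMRT} to conclude bigness and compute the slope. The only cosmetic difference is that the paper writes the class against $K_{\cX/\Delta}$ rather than the ample generator $H_t$, which absorbs your Fano-index factor and yields $\mu=b/a$ directly; your Step~4 argument via the extremal ray spanned by $\check{\cC}_t$ makes explicit what the paper leaves implicit (it is the content behind Theorem~\ref{t.bigness-total-dual-VMRT}).
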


\begin{proof}
	Let $\sigma:\Delta\rightarrow \cX$ be a general section passing through a general point in $\cX_0$. Then the normalized Chow space $\cK_{\sigma(t)}$ along this section gives a family of smooth projective varieties. On the other hand, since the VMRT of $\cX_t$ is smooth for any $t$, it follows that the VMRTs $\cC_{\sigma(t)}\subseteq \bP(T_{\sigma(t)}\cX_t)$ along $\sigma(\Delta)$ is a smooth family of embedded projective varieties. Then by Lemma \ref{l.defect-deformation}, the VMRT $\cC_{\sigma(t)}\subseteq \bP(T_{\sigma(t)} \cX_t)$ is not dual defective for any $t\in \Delta$. In particular, the  relative total dual VMRT $\check{\cC}_{\cX}\subseteq \bP(T^*(\cX/\Delta))$ of the relative total VMRT $\cC_{\cX}\subseteq \bP(T(\cX/\Delta))$ is a prime divisor, where $T(\cX/\Delta)$ is the relative tangent bundle of $p$. Since the fibration $\cX\rightarrow \Delta$ has relative Picard number $1$, there are two unique real numbers $a$ and $b$ such that 
	\[
	\check{\cC}_{\cX}\sim_p a\Lambda_{\cX} + b\pi^*K_{\cX/\Delta},
	\]
	where $\Lambda_{\cX}$ is the tautological divisor of $\bP(T^*(\cX/\Delta))$. Then it is clear that $a$ is equal to the codegree of the VMRT of $\cX_0$ (cf. Lemma \ref{l.defect-deformation}) and 
	\[
	\mu(T{\cX_t},-K_{\cX_t})=\frac{b}{a},\ \forall t\in \Delta.
	\]
	As $T{\cX_0}$ is big, we have $b>0$ by Theorem \ref{t.bigness-total-dual-VMRT}. Hence, the tangent bundle $T{\cX_t}$ is big for any $t\in \Delta$.
\end{proof}

\begin{rem}
	Recall that a smooth projective variety $X$ is said \emph{locally rigid} if for any smooth deformation $\cX\rightarrow \Delta$ with $\cX_0\cong X$, we have $\cX_t\cong X$ for $t$ in a small analytic neighbourhood of $0$. 
	\begin{itemize}
		\item Smooth projective horospherical varieties with Picard number $1$ and the smooth projective two-orbits varieties $\textbf{X}_1$ and $\textbf{X}_2$ are locally rigid except the horospherical $G_2$-variety $X^5$ \cite[Theorem 0.5 and Proposition 2.3]{PasquierPerrin2010}.
		
		\item Smooth projective symmetric varieties with Picard number $1$ are locally rigid, see \cite{KimPark2019,BaiFuManivel2020,Manivel2020a}.
		
		\item Smooth equivariant compactifications of vector groups  with Picard number $1$ may be not locally rigid. Among all the known examples (see Example \ref{e.Examples-EC-Picard-number-one}), the only locally non-rigid ones are the smooth $\bP^4$-general linear sections $\bS_k^a$ of $\bS_5\subseteq \bP^{15}$ with codimension $k=2$ or $3$, see \cite{BaiFuManivel2020}.
	\end{itemize}
\end{rem}

\begin{cor}
	\label{c.linear-section-S10}
	Let $S_k$ be a smooth codimension $k$ linear section of $\bS_5\subseteq \bP^{15}$. Then $TS_k$ is big if $k\leq 3$.
\end{cor}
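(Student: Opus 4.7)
The strategy is to split by the value of $k$: the case $k=1$ is handled by homogeneity of $\bS_5$, while the cases $k=2,3$ are reduced to the $\bP^4$-general case (which is an equivariant compactification of a vector group) via the deformation principle of Proposition \ref{p.bigness-deformation}.

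For $k=1$: the spinor variety $\bS_5 \subseteq \bP^{15}$ is homogeneous under the half-spin representation of $\mathrm{Spin}_{10}$ on $\bC^{16}$, so every smooth hyperplane section is projectively equivalent. In particular, $S_1$ is isomorphic to the horospherical variety $X^2 = (B_3,\omega_1,\omega_3)$, whose tangent bundle is big by Proposition \ref{p.horospherical}.

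For $k = 2, 3$: parametrize smooth codimension-$k$ linear sections by an irreducible open subset $U \subseteq \Gr(k, 16)$, and let $U^a \subseteq U$ be the non-empty open locus of $\bP^4$-general sections. By Example \ref{e.Examples-EC-Picard-number-one} and Proposition \ref{p.commutative-groups}, every $S^a_L$ with $[L] \in U^a$ has big tangent bundle. Given an arbitrary smooth $S_k = S_L$, connect $[L]$ to some $[L_0] \in U^a$ by a morphism $\Delta \to U$, yielding a smooth family $\cX \to \Delta$ with $\cX_0 = S^a_{L_0}$ and $\cX_{t_0} = S_k$ for some $t_0 \in \Delta$. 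To invoke Proposition \ref{p.bigness-deformation}, I need to verify that (i) the VMRT at a general point of $\cX_t$ is smooth for every $t\in\Delta$, and (ii) the VMRT of $S^a_{L_0}$ is not dual defective; condition (ii) is recorded in Table \ref{table.codegree-EC}.

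The main obstacle is condition (i). The VMRT of $\bS_5$ at a point is the Grassmannian $\Gr(2,5) \subseteq \bP^9$ in its Pl\"ucker embedding, so the VMRT of $S_L$ at a general point is the intersection of $\Gr(2,5)$ with a codimension-$k$ linear subspace of $\bP(T_x \bS_5)$. Since smoothness of $S_L$ (transversality of $L$ to $\bS_5$ inside $\bP^{15}$) and smoothness of the VMRT (transversality of $\bP(T_x S_L)$ to $\Gr(2,5)$ inside $\bP(T_x \bS_5)$) are a priori independent, the plan is to argue that for $k \leq 3$ the locus in $U$ of $[L]$ with singular VMRT has codimension at least $2$, via a dimension count for the incidence variety over $\bS_5$; this forces the arc $\Delta \to U$ to be deformable into the VMRT-smooth open subset of $U$. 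With (i) and (ii) secured, Proposition \ref{p.bigness-deformation} delivers bigness of $TS_k$.
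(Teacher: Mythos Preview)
Your overall strategy coincides with the paper's: handle $k=1$ separately (the paper uses that there is a unique projective-equivalence class, you use $S_1\cong X^2$; both are fine), and for $k=2,3$ propagate bigness from the $\bP^4$-general section $S_k^a$ to an arbitrary smooth $S_k$ via Proposition~\ref{p.bigness-deformation}, using that the VMRT $V_k\subseteq\bP^{9-k}$ is not dual defective (Table~\ref{table.codegree-EC}).

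The paper, however, simply asserts that the VMRT of \emph{every} smooth $S_k$ at a general point is a smooth codimension-$k$ linear section of $\Gr(2,5)$, and then applies Proposition~\ref{p.bigness-deformation} directly to any one-parameter family joining $S_k^a$ to the given $S_k$. Your attempt to justify this smoothness by a dimension count has a genuine gap. Even granting that the locus $W\subseteq U$ of $[L]$ whose $S_L$ has singular general VMRT has codimension $\ge 2$, this only lets you move the \emph{interior} of the arc $\Delta\to U$ away from $W$; it does nothing for the endpoint $[L]$ itself. If the given $[L]$ lies in $W$, no arc through $[L]$ can satisfy hypothesis~(i) of Proposition~\ref{p.bigness-deformation}, and your argument stalls precisely for those $S_k$ you still need to cover.

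There are two clean repairs. The first is to show $W=\varnothing$ directly: using $0\to N_{\ell/S_k}\to N_{\ell/\bS_5}\cong\sO(1)^{6}\oplus\sO^{3}\to\sO(1)^{k}\to 0$ one checks that every line through a general point of a smooth $S_k$ has nef normal bundle, so the scheme of lines through that point is smooth; this is the fact the paper is tacitly invoking. The second, closer to your setup, is to keep your codimension estimate but append a semicontinuity step: once you know $TS_{L'}$ is big for all $[L']$ in the dense open $U\setminus W$, upper semicontinuity of $h^0(\Sym^m T_{S_{L}}\otimes\sO(-A))$ over $U$ forces $TS_L$ big for $[L]\in W$ as well. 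Either addition closes the gap; without one of them, the argument as written does not prove the corollary for all smooth $S_k$.
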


\begin{proof}
	Recall that the VMRT of $\bS_5$ is the Grassmannian $\Gr(2,5)\subseteq \bP^9$ in its Pl\"ucker embedding which is self-dual. In particular, its dual defect is $2$. Moreover, the VMRT of $S_k$ is a smooth codimension $k$ linear section of $\Gr(2,5)\subseteq \bP^9$. In particular, the VMRT of $S_k$ is dual defect if and only if $k=1$. Moreover, if $k\leq 3$, a codimension $k$ smooth $\bP^4$-general linear section $S_k^a$ of $\bS_5$ is an equivariant compactification of a vector group. As a consequence, if $k=2$ or $3$, by Theorem \ref{t.Examples-main-thm} and Proposition \ref{p.bigness-deformation}, the tangent bundle $TS_k$ is big. On the other hand, if $k=1$, then there is only one class of $S_1$ up to projective equivalence. Hence, the tangent bundle $TS_1$ is also big.
\end{proof}

\begin{rem}
	The variety $S_9$ is a smooth curve of genus $7$ and $S_8$ is a smooth K3 surface. In particular, their tangent bundles are even not pseudoeffective. For $S_6$ and $S_7$, their VMRTs are $0$-dimensional and it follows from \cite[Theorem 1.1]{HoeringLiu2021} that their tangent bundles are not big. Thus the only remaining unknown cases are $S_4$ and $S_5$. On the other hand, there are exactly two isomorphic classes of $S_2$. The special one $S_2^a$ is an equivariant compactification of $\bG_a^8$, while the general one $S_2^g$ is the $G_2\times \text{PSL}_2$-variety $\textbf{X}_2$ given in \cite[Theorem 0.2 and Definition 2.12]{Pasquier2009} (cf. \cite[Proposition 4.8]{BaiFuManivel2020}).
\end{rem}

In the following we apply Proposition \ref{p.Intro-criterion} to treat the two-orbits $F_4$-variety $\textbf{X}_1$. Let us give a brief geometric description of $\textbf{X}_1$ \cite[Proof of Proposition 2.2 and Definition 2.11]{Pasquier2009}. Set $G=F_4$. Let $G/H=O\subseteq X$ be the unique open $G$-orbit and by $Z$ its complement, which is the unique closed $G$-orbit. Then $Z$ has codimension $3$. Let $P$ be a parabolic subgroup of $G$ containing $H$ and minimal for this property. Then $R(P)\subseteq H$ and $P$ is the maximal parabolic subgroup $P(\omega_1)$ of $F_4$. Let 
\[
\varphi:G/H=O\rightarrow Y=G/P\]
be the natural projection. Let $F$ be an arbitrary fibre of $\varphi$. Then $P$ acts transitively over $F$. Denote by $Q$ the quotient $P/R(P)$. Then $Q$ is a semisimple group of type $C_3$ and $Q$ acts transitively over $F$. Moreover, the $Q$-variety $F$ has an equivariant compactification $\Gr(2,\mathbb{C}^6)\subseteq \bP(\wedge^2 \mathbb{C}^6)$ whose boundary divisor is a closed $Q$-orbit isomorphic to $\Gr_{\omega}(2,6)$.

\begin{prop}
	\label{p.X1}
	The tangent bundle of $\textbf{X}_1$ is big.
\end{prop}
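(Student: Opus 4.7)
The plan is to apply Proposition~\ref{p.Intro-criterion} to $X = \textbf{X}_1$. The obstruction is that $\textbf{X}_1$ has no $G$-stable prime divisor (its two $G$-orbits being $O$ of codimension $0$ and $Z$ of codimension $3$), so Lemma~\ref{l.Gstable} cannot be invoked directly. To circumvent this, I would pass to the natural $G$-equivariant compactification $\tilde X \coloneqq G \times_P \Gr(2, \bC^6)$ of $O$ — the $\Gr(2, \bC^6)$-bundle over $G/P$ inherited from the fiber compactification described in the paragraph preceding the proposition — and exploit its $G$-stable divisor $\tilde D \coloneqq G \times_P \Gr_\omega(2, \bC^6)$, which is contracted onto $Z$.

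First I would verify that $\tilde X$ is an $F_4$-spherical variety of complexity $0$ and rank $1$. This reduces to the sphericity of the fiber $\Gr(2, \bC^6)$ as a $C_3 = \text{Sp}_6$-variety: its open $C_3$-orbit $F = \text{Sp}_6/(\SL_2 \times \text{Sp}_4)$ is a rank-one symmetric space and its closed orbit $\Gr_\omega(2, \bC^6)$ is $C_3$-horospherical of rank $0$. Consequently, $\tilde D$ satisfies $c(\tilde D) = 0$ and $r(\tilde D) = 0 = r(\tilde X) - 1$, and Lemma~\ref{l.Gstable} applied to $\tilde X$ and $\tilde D$ gives $\dim \cM_{\tilde X}^G(\tilde D) < \dim \cM_{\tilde X}^G$. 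Since the moment-map image is a $G$-birational invariant and $\tilde X$ and $\textbf{X}_1$ share the open orbit $O$, we have $\cM_X^G = \cM_{\tilde X}^G$.

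Next I would take $A$ to be the complement of the open $B$-orbit in $\textbf{X}_1$: the local structure theorem for spherical varieties makes this open $B$-orbit affine, so by Proposition~\ref{p.Goodman-affinity} the divisor $A$ is big and effective. The irreducible components of $A$ are colors of $\textbf{X}_1$, and via the birational identification with $\tilde X$ each corresponds to a color of $\tilde X$. The colors of $\tilde X$ fall in two classes: pullbacks of Schubert divisors of $G/P$, handled exactly as in the proof of Proposition~\ref{p.horospherical} via the factorisation $T^*O \to G \times_{N_G(H)} \fh^\perp \to \fg^*$; and those arising from colors of $\Gr(2, \bC^6)$ as $C_3$-spherical variety, where the symmetric-space rank of the fiber drops from $1$ to $0$. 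In either case $\dim \cM_X^G(D_i) < \dim \cM_X^G$, so $\dim \cM_X^G(A) < \dim \cM_X^G$, and Proposition~\ref{p.Intro-criterion} concludes that $T\textbf{X}_1$ is big.

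The main difficulty will be the spherical-geometric analysis of $\tilde X$ — establishing its complexity and rank, enumerating its colors, and verifying the moment-map dimension drop along each color. This requires Knop's general theory of moment maps for spherical varieties, applied both to the flag variety $F_4/P(\omega_1)$ and to the $C_3$-spherical variety $\Gr(2, \bC^6)$; the nontrivial point is that, although $\Gr(2, \bC^6)$ fails to be $(B\cap P)$-homogeneous, the rank-one symmetric structure of its open $C_3$-orbit $F$ controls the moment map images of the fiber-type colors.
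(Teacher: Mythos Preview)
Your strategy differs from the paper's, and while the overall shape is reasonable, there is a genuine gap in the type-1 color argument.

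The paper does not pass to $\tilde X$ or enumerate colors at all. Instead it takes a single ample divisor: the closure in $\textbf{X}_1$ of $\varphi^*A'$ for an ample prime divisor $A'$ on $Y=G/P$ (ample because $\textbf{X}_1$ has Picard number $1$). The heart of the argument is a direct computation of $\dim\cM_X^G(F)$ for a single fibre $F$ of $\varphi$, via the commutative square
\[
\begin{tikzcd}
T^*X|_F \arrow[r] \arrow[d,"\Phi_X^G"] & T^*F \arrow[d,"\Phi_F^P"] \\
\fg^* \arrow[r,"\eta"] & \fp^*
\end{tikzcd}
\]
together with the equality $\cM_F^P=\iota(\cM_F^Q)$ and Knop's formula for the rank-$1$ spherical $Q$-variety $F$. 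This yields $\dim\cM_X^G(F)=\dim X+\dim F-1$, hence $\dim\cM_X^G(\varphi^*A')\le\dim A'+\dim\cM_X^G(F)=2\dim X-2<2\dim X-1=\dim\cM_X^G$.

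Your claim that the type-1 colors are ``handled exactly as in the proof of Proposition~\ref{p.horospherical}'' does not go through as stated. That proof relies on the factored map $G\ast_{N_G(H)}\fh^{\perp}\to\fg^*$ being generically finite onto $\cM_X^G$, which in the horospherical case holds precisely because $N_G(H)=P$ is parabolic and $\dim(P/H)=r(X)$. For $\textbf{X}_1$ the fibre $F=P/H$ has $\bar H=H/R(P)\cong\SL_2\times\text{Sp}_4$ self-normalising in $Q=\text{Sp}_6$ (the $2$-plane and its symplectic complement have different dimensions), so $N_G(H)/H$ is finite rather than a $1$-torus; the factored map then has $1$-dimensional generic fibres and the horospherical dimension count fails. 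What is actually needed for these pullback divisors is exactly the fibre-wise bound on $\dim\cM_X^G(F)$ that the paper proves --- and this requires the $\fp^*$-diagram above, which your proposal does not supply. Your detour through $\tilde X$ and $\tilde D$ is correct but not used: $\tilde D$ contracts to the codimension-$3$ orbit $Z$, so it contributes no divisor on $\textbf{X}_1$, and you yourself switch to colors for the actual argument. The treatment of type-2 colors (``the symmetric-space rank of the fiber drops from $1$ to $0$'') is too vague to evaluate and would in any case be unnecessary once you have the paper's single-divisor computation.
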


\begin{proof}
	Firstly we show that the image $\cM_F^P\subseteq \fp^*$ has dimension $2\dim(F)-1$. Indeed, note that the $Q$-variety $F$ has complexity $0$ and rank $1$. It follows from Theorem \ref{t.KnopdimensionofMomentmap} that the variety $\cM_F^Q\subseteq \fq^*$  has dimension $2\dim(F)-1$. Let $\bar{\fh}$ be the Lie algebra of the image $\bar{H}$ of $H$ in $Q$ and let $\iota:\fq^*\rightarrow \fp^*$ be the natural inclusion induced by $P\rightarrow Q$. As $R(P)\subseteq H$, we have $\iota(\bar{\fh}^{\perp})=\fh^{\perp}$. Consider the following commutative diagram
	\[
	\begin{tikzcd}[column sep=large, row sep=large]
		T^*F=P\ast_H \fh^{\perp} \arrow[r,"{\Phi_F^P}"] \arrow[d,"\sigma" left]
		    & \fp^* \\
		T^*F=Q\ast_{\bar{H}} \bar{\fh}^{\perp} \arrow[r,"{\Phi_F^Q}" below] 
		    & \fq^* \arrow[u,"\iota" right]
	\end{tikzcd}
	\]
	where $\sigma$ is an isomorphism. This yields $\cM_F^P=\iota(\cM_F^Q)$ and consequently $\cM_F^P$ has dimension $2\dim(F)-1$.
	
	Next we show that the image $\cM_X^G(F)$ has dimension $\dim(X)+\dim(F)-1$. To see this, let us consider the following commutative diagram
	\[
	\begin{tikzcd}[column sep=large, row sep=large]
		N_{F/X}^* \arrow[r] \arrow[rd,"\nu"]
		    & T^*X|_F \arrow[r] \arrow[d,"{\Phi_X^G}"]
		        & T^*F \arrow[d,"{\Phi_F^P}"]  \\
		    & \fg^* \arrow[r,"\eta"] 
		        &  \fp^*
	\end{tikzcd}
	\]
	As $\eta(\cM_X^G(F))=\eta(\Phi_X^G(T^*X|_F))=\Phi_F^P(T^*F)=\cM_F^P$, it follows that we have
	\[
	\dim(\cM_X^G(F)) \leq \dim(\cM_F^P) + \dim(\fg^*) - \dim(\fp^*)=\dim(X)+\dim(F)-1.
	\]
	
	Finally, note that the $G$-variety $\textbf{X}_1$ has complexity $0$ and rank $1$. The image $\cM_X^G$ has dimension $2\dim(X)-1$ by Theorem \ref{t.KnopdimensionofMomentmap}. Hence, we obtain
	\[
	2\dim(X)-1=\dim(\cM_X^G) \leq \dim(Y)+\dim(\cM_X^G(F)) \leq 2\dim(X)-1.
	\]
	This implies that $\dim(\cM_X^G(F))=\dim(X)+\dim(F)-1$. Let $A$ be a prime ample divisor in $Y$. Then the closure of $\varphi^*A$ in $\textbf{X}_1$ is an ample prime divisor as $\textbf{X}_1$ has Picard number $1$. On the other hand, note that we have
	\[
	\dim(\cM_X^G(\varphi^*A)) \leq \dim(A) + \dim(\cM_X^G(F)) = 2\dim(X)-2.
	\]
	Hence, according to Proposition \ref{p.Intro-criterion}, the tangent bundle of $\textbf{X}_1$ is big.
\end{proof}

\begin{question}
	Are the tangent bundles of $S_4$ and $S_5$ big?
\end{question}

\subsubsection{Pseudoeffective slope of equivariant compactifications}

Let $\cH\subseteq \bP(\fg^*)$ be an irreducible reduced hypersurface defined by $\xi \in \Sym^m \fg$. Then we have:
\[
D_{\xi} = D_{\cH} + \sum \mult_{\pi^*D} (D_{\xi}) \pi^*D,
\]
where $D$ runs over all the prime divisor in $X$ such that $\bP(\cM_X^G(D))$ is contained in $\cH$ and $\pi:\bP(T^*X)\rightarrow X$ is the natural projection.

\begin{notation}
	Let $C$ be a smooth projective curve and let $E$ be a vector bundle of rank $n$ over $C$. Assume that there exists a non-zero map $\varphi:E\rightarrow V^r$, where $V^r$ is the trivial vector bundle of rank $r$ over $C$. Let $p:\bP(V^r)=C\times \bP^{r-1}\rightarrow \bP^{r-1}$ be the second projection and let $\xi$ be a homogeneous polynomial over $\bP^{r-1}$. For a fibre $F$ of $\bP(E)\rightarrow C$, the \emph{multiplicity $m_F(\varphi,\xi)$ along $F$} is defined as the multiplicity of the pull-back $(p\circ \varphi)^*\xi$ along $F$.
\end{notation}

\begin{lem}
	\label{l.multiplicity}
	Let $D$ be a prime divisor in $X$ such that $\bP(\cM_X^G(D))$ is contained in $\cH$. Let $C$ be an irreducible curve not contained in $D$ and meeting a general point of $D$. Denote by $f:\widetilde{C}\rightarrow C$ its normalisation. Then we have
	\[
	\mult_{\pi^*D} (D_{\xi}) = m_F(\varphi,\xi),
	\]
	where the map $\varphi:f^*T^*X\rightarrow \widetilde{C}\times \fg^*$ is naturally induced by the moment map $\Phi_X^G$ and $F$ is a fibre of $\bP(f^*T^*X)\rightarrow \widetilde{C}$ over a point $c$ such that $f(c)\in D$.
\end{lem}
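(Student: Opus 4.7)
The plan is to pull back the decomposition
\[
D_\xi = D_\cH + \sum_{D'} \mult_{\pi^*D'}(D_\xi)\,\pi^*D'
\]
via the induced map $\widetilde{f}:\bP(f^*T^*X)\to \bP(T^*X)$ and compare multiplicities along the prime divisor $F\subseteq \bP(f^*T^*X)$ term by term, where the sum runs over prime divisors $D'\subseteq X$ with $\bP(\cM_X^G(D'))\subseteq \cH$.

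The first step is to identify $\mult_F(\widetilde{f}^*D_\xi)$ with $m_F(\varphi,\xi)$. By construction, the restriction of the moment map to $f^*T^*X$ factors through $\varphi:f^*T^*X\to \widetilde{C}\times \fg^*$, so the pullback $(p\circ\varphi)^*\xi$ coincides with $f^*H_\xi$, viewed as a fibrewise homogeneous polynomial of degree $m$, i.e.\ as a section of $\Sym^m f^*T_X$ (equivalently of $\sO_{\bP(f^*T^*X)}(m)$). Its zero divisor on $\bP(f^*T^*X)$ is precisely $\widetilde{f}^*D_\xi$, and the multiplicity of this divisor along $F$ is $m_F(\varphi,\xi)$ by definition.

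Next I would evaluate the right-hand side term by term. Since $f(c)$ is a general point of $D$, it lies on no other prime divisor $D'\neq D$, hence $\mult_F(\widetilde{f}^*\pi^*D')=\mult_c(f^*D')=0$. Transversality of $C$ and $D$ at the generic point $f(c)$, combined with $f$ being an isomorphism near $c$, gives $\mult_F(\widetilde{f}^*\pi^*D)=\mult_c(f^*D)=1$. It remains to show $\mult_F(\widetilde{f}^*D_\cH)=0$, or equivalently that $F$ is not contained in $\widetilde{f}^*D_\cH$. For this I would observe that the fibre $\pi^{-1}(x)$ cannot lie in $D_\cH$ for $x$ in a dense open subset of $D$: if it did, then $D_\cH - \pi^*D$ would be effective, contradicting the definition of $D_\cH$ as the horizontal part of $D_\xi$. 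Since $F$ maps isomorphically onto $\pi^{-1}(f(c))$ with $f(c)$ a general point of $D$, this yields $\mult_F(\widetilde{f}^*D_\cH)=0$.

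Combining these computations gives $m_F(\varphi,\xi)=\mult_F(\widetilde{f}^*D_\xi)=\mult_{\pi^*D}(D_\xi)$, which is the claim. I expect the main (mild) subtlety to be the first step, where one must check carefully that the section of $\sO_{\bP(f^*T^*X)}(m)$ produced from the definition of $m_F(\varphi,\xi)$ really coincides with the pullback of the section of $\sO_{\bP(T^*X)}(m)$ defining $D_\xi$; once this bookkeeping is set up cleanly, the remaining steps are elementary consequences of the generic choice of $C$ and the definition of the horizontal part.
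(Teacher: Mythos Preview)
Your proposal is correct and amounts to a careful unpacking of the paper's one-line proof, which simply says the claim ``follows directly from the definition of $D_{\xi}$ and the fact that the multiplicity $\mult_{\pi^*D}(D_\xi)$ only depends on the multiplicity of $D_{\xi}$ along general fibres of $\pi^*D\rightarrow D$.'' Your decomposition-and-pullback argument makes explicit exactly what this sentence means.

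One small caveat worth flagging: you invoke ``transversality of $C$ and $D$ at the generic point $f(c)$'' to obtain $\mult_c(f^*D)=1$, but this does not follow from the hypothesis ``$C$ meets a general point of $D$'' alone---a curve can be tangent to $D$ at a general point. Without transversality your computation would give $m_F(\varphi,\xi)=\mult_c(f^*D)\cdot \mult_{\pi^*D}(D_\xi)$ instead. This is really a looseness in the lemma's statement rather than in your argument: in the only application (the proof of Theorem~\ref{t.EC-main-thm}) the curve $C$ is a minimal rational curve with $D\cdot C=1$, so transversality is automatic, and the paper's own proof implicitly relies on the same point.
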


\begin{proof}
	This follows directly from the definition of $D_{\xi}$ and the fact that the multiplicity $\mult_{\pi^*D}(D_\xi)$ only depends on the multiplicity of $D_{\xi}$ along general fibres of $\pi^*D\rightarrow D$.
\end{proof}

Now we are in the position to finish the proof of Theorem \ref{t.EC-main-thm}.

\begin{proof}[Proof of Theorem \ref{t.EC-main-thm}]
	The first statement follows from Proposition \ref{p.Pseff-cone-EC}. For the second statement, as the VMRT of $X$ is smooth, following the notations in Proposition \ref{p.boundary-divisor-EC}, there exist closed subvarieties $E_0\subseteq \Bl_S(\bP^n)$ and $D_0\subseteq X$ of codimension at least $2$ such that the following morphism 
	\[
	\Phi:X\setminus D_0 \rightarrow \Bl_S(\bP^n)\setminus(\widetilde{H}\cup E_0)
	\]
	is an isomorphism. Moreover, the induced rational map $E\dashrightarrow D$ is birational and $\bG_a^n$-equivariant. Note that the $\bG_a^n$-orbits on $E=\bP(N_{S/\bP^n})\setminus W=\bP(N_{S/H})$ are just the fibres of the natural projection $E\setminus W\rightarrow S$ (see Example \ref{e.Examples-EC-structure}). Thus the general $\bG_a^n$-orbits on $D$ have dimension $n-p-1$, where $p$ is the dimension of the VMRT $S\subseteq \bP^{n-1}$ of $X$.

	Fix a point $o\in X$. If $l$ is a general minimal rational curve passing through $o$, then $l$ is the strict transform of a line $l'$ in $\bP^n$ passing through $o$. Moreover, we may also assume that the strict transform of $l$ in $\Bl_S(\bP^n)$ is disjoint from $\widetilde{H}\cup E_0$. In particular, the curve $l$ meets $D$ at a smooth point $z\in D\setminus D_0$. Since $l$ is standard, we have
	\[
	f^*TX \cong \sO_{\bP^1}(2)\oplus \sO_{\bP^1}(1)^{\oplus p} \oplus \sO_{\bP^1}^{\oplus n-p-1}.
	\]
	where $f:\bP^1\rightarrow l\subseteq X$ is the natural embedding. Denote by $T^+_lX$ the positive factor of $f^*TX$ and by $T_o^+ l$ the fibre of $T^+_l X$ over $o$. By Proposition \ref{p.factsofEC}, there exists a $1$-dimensional subspace $V_l$ of $\bG_a^n$ such that $l$ is the closure of the $V_l$-orbit of $o$. Moreover, the subbundle $T^+_l X$ is preserved by the $V_l$-action.
	
	Denote by $\fg_l$ the subspace of $\fg$ corresponding to the subspace $T_o^+l\subseteq T_o X\cong \fg$. Then the $V_l$-action induces a map of vector bundles
	\[
	\Psi: l\times \fg \rightarrow f^*TX
	\]
	such that the induced map $l\times \fg_l\rightarrow T^+_l X$ is non-degenerate along $l\setminus\{z\}$. Moreover, as $\bG_a^n$ is commutative, the dual map $\Psi^*:f^*T^*X\rightarrow l\times \fg^*$ is exactly the restriction of the map 
	\[
	\pi\times \Phi_X^{\bG_a^n}: T^*X \rightarrow X\times \fg^*.
	\]
	From the splitting type of $T^+_l X$, one can derive that the linear map $\{z\}\times \fg_l \rightarrow T_z^+ l$ is zero. As the $\bG_a^n$-orbit of $z$ has dimension $n-p-1$, the rank of the linear map 
	\[
	\{z\}\times \fg\rightarrow T_z X
	\]
	is $n-p-1$. This implies $\Phi_X^G(T^*_z X)=\fg_l^{\perp}$. On the other hand, as $\bP(T_o^+ l)\subseteq \bP(T_o X)$ is the projectivised tangent bundle of the VMRT $\cC_o\subseteq \bP(T_o X)$ at $[T_o l]$, thus we may regard $\fg_l^{\perp}$ as the set of hyperplanes in $\bP(T_o X)$ which are tangent to $\cC_o$ at $[T_o l]$. 
	
	For a general point $z\in D$, the $\bG_a^n$-orbit $O_z$ of $z$ is the image of a fibre of $E\setminus W\rightarrow S$ over some point $s\in S$. In particular, the strict transform of the line connecting $o$ and $s$ is a minimal rational curve $l$ on $X$ passing through $o$ and meeting $O_z$ at a point $z'$. As $G$ is commutative, we have $\Phi_X^G(T^*X|_{O_z})=\Phi_X^G(T_{z'}^*X)=\fg_l^{\perp}$, where $\fg_l$ is the subspace of $\fg$ corresponding to $T^+_o l$. As a consequence, the image $\bP(\cM_D^G)\subseteq \bP(\fg^*)$ is the closure of the following 
	\[
	\bigcup_{[l]\in \cK_x\ \text{general}} \bP(\fg_l^{\perp}) = \bigcup_{[l]\in \cK_x\ \text{general}} \bP((T^+_o l)^{\perp}) \subseteq \bP(T_o^* X)
	\]
	which is exactly the dual variety of the VMRT $\cC_o\subseteq \bP(T_o X)$ by definition.
	
	Finally, we assume that the VMRT is smooth and not dual defective. Then $\bP(\cM_D^G)$ is a hypersurface in $\bP(\fg^*)$. For simplicity, we denote it by $\cH$. Now we want to determine the cohomological class of the divisor $D_{\cH}\subseteq \bP(T^*X)$. The equality $D_{\cH}=\check{\cC}$ can be easily derived from our argument above (see also Lemma \ref{l.inclusion-dual-VMRT}). Hence, by Theorem \ref{t.bigness-total-dual-VMRT}, it remains to prove $D_{\cH}\sim a\Lambda - 2\pi^*D$.
	
	Choose a general minimal rational curve $l$ on $X$ meeting $D$ at $z$. Fix a general point $o\in l$ and identify $\cH\subseteq \bP(\fg^*)$ to the dual variety of $\cC_o\subseteq \bP(T_o X)$. Consider the following map $\Psi^*:f^*T^*X \rightarrow l\times \fg^*$. Denote by $\xi \in \Sym^a \fg$ a defining equation of $\cH$, where $a$ is the degree of $\cH$, i.e., the codegree of $\cC_o$. By Lemma \ref{l.multiplicity}, we only need to calculate $\mult_F(\Psi^*,\xi)$, where $F$ is the fibre of $f^*T^*X\rightarrow l$ over $z$. Fix a coordinate $t$ around $z\in \bA^1\subseteq l$. Then after choosing suitable trivialisation of $f^*T^*X$, the map $\Psi^*:\bA^1\times \bC^{n}\rightarrow \bA^1\times \bC^{n}$ can be written in coordinates as follows:
	\[
		(x,v_0,v_1,\dots,v_p,v_{p+1},v_{n-1})
		    \mapsto (x,t^2 v_0,tv_1,\dots,tv_p,v_{p+1},\cdots,v_{n-1}),
	\]
	where the first coordinate $v_0$ corresponds to the cotangent bundle $\sO_{\bP^1}(-2)$ of $l$ and the first $p+1$ coordinates correspond to the negative factors of $f^*T^*X$. Given a general point $y$ on $\bar{\Phi}_X^G(\bP(F))\subseteq \cH$, then we may assume that $\cH$ is smooth at $y$ as $l$ is general. In particular, by biduality theorem, the projectivised tangent bundle of $\cH$ at $y$ corresponds to the point $[T_o l]\in \cC_o$. This implies that the linear part of the local equation of $\cH$ at $y$ only consists of the first coordinate $v_0$. In particular, the local description above shows that multiplicity $\mult_F(\Psi^*,\xi)$ is $2$. Hence, we have $D_{\xi}=D_{\cH}+2\pi^*D$ and the result follows as $D_{\xi}\in |a\Lambda|$.
\end{proof}

\begin{rem}
	Let us give a more geometric description of the linear map $\{z\}\times \fg\rightarrow T_z X$. Let $\pi_o:\bP^{n}\setminus \{o\}\rightarrow H$ be the projection from $o$. Then $\pi_o$ induces a natural projection 
	\[
	p_o:\fg\setminus\{0\} \xrightarrow{d\mu_o} \textbf{T}_o \bP^n\setminus \{o\}=\bP^n\setminus\{o\}\xrightarrow{\pi_o} H,
	\]
	where $\textbf{T}_{o}\bP^n$ is the projectivised tangent space of $\bP^n$ at $o$. For a general point $s\in S$, for a point $z'\in E_s\setminus W_s$, the Lie algebra $\fg_{z'}$ of the isotropy subgroup $G_{z'}$ of $z'$ is exactly the linear subspace of the inverse image $p_o^{-1}(\textbf{T}_s S)=\fg_l$, where $\textbf{T}_{s} S\subseteq H$ is the projectivised tangent bundle of $S$ at $s$. Thus the map $\{z\}\times \fg\rightarrow T_z X$ is the projection $\fg \rightarrow \fg/\fg_l$. 
\end{rem}

\subsubsection{Codegree of VMRT}

Now we proceed to calculate the codegree of the VMRT of equivariant compactifications $X$ of vector groups given in Example \ref{e.Examples-EC-Picard-number-one}. If $X$ is an irreducible Hermitian symmetric space, the pseudoeffective cone of $\bP(T^*X)$ and hence the value $\mu(TX,-K_X)$ are determined in \cite{Shao2020} and \cite{FuLiu2021}. In particular, if the VMRT is not dual defective, it turns out that its codegree is equal to the rank of $X$ in the sense of \cite[Definition 4.6]{Shao2020}. Here we remark that the definition of rank in \cite[Definition 4.6]{Shao2020} of $X$ is different from that given in Definition \ref{d.rank-complexity}. For the remaining non-homogeneous examples, we summarise the results in the following table:

\renewcommand*{\arraystretch}{1.6}
\begin{longtable}{|M{2.2cm}|M{4.6cm}|M{1.7cm}|M{0.9cm}|M{2.8cm}|}
	\caption{Known examples of non-homogeneous EC-structure}
	\label{table.codegree-EC}
	\\
	\hline
	
	$X$
	& VMRT 
	& embedding
	& defect
	& codegree
	\\
	\hline
	
	$X^3(m,m-1)$ $(m\geq 2)$
	& $\bP(\sO_{\bP^{m-1}}(-1)\oplus \sO_{\bP^{m-1}}(-2))$
	& $|\sO(1)|$
	& $0$
	& $m+1$
	\\
	\hline
	
	$V_2$
	& $\bP^1$
	& $|\sO(3)|$
	& $0$
	& $4$
	\\
	\hline
	
	$S^a_1$
	& $V_1$
	& $|\sO(1)|$
	& $1$
	& --
	\\
	\hline
	
	$S^a_2$
	& $V_2$
	& $|\sO(1)|$
	& $0$
	& $5$
	\\
	\hline
	
	$S^a_3$
	& $V_3$
	& $|\sO(1)|$
	& $0$
	& $10$
	\\
	\hline
\end{longtable}

\paragraph{\textit{Odd Lagrangian Grassmannians.}}

Let $\textbf{a}:=(a_0,\dots,a_r)$ be a sequence of integers such that $0\leq a_0\leq \dots\leq a_r$ with $a_r>1$. Denote by $E_m(\textbf{a})$ the following vector bundle over $\bP^m$:
\[
\bigoplus_{i=0}^r \sO_{\bP^m}(-a_i).
\]
Then the tautological linear bundle $\sO_{\bP(E_m(\textbf{a}))}(1)$ of $\bP(E_m(\textbf{a}))$ is globally generated and defines a morphism
\[
\Phi_m(\textbf{a}):\bP(E_m(\textbf{a})) \rightarrow \bP^{N(m,\textbf{a})}.
\]
This map is birational because $a_r>0$. Write $S_m(\textbf{a})$ for the image of this map. Note that if $a_0>0$, the morphism $\Phi_m(\textbf{a})$ is an embedding.

According to \cite[\S\,6]{HwangLi2021} (see also \cite[Proposition 3.5.2]{HwangMok2005}), the VMRT of the odd symplectic Grassmannian $X^3(m,i)$ is projectively equivalent to 
\[
S_{m-1}(1^{2m-2i-1},2)\subseteq \bP^{N(m,1^{2m-2i-1},2)}.
\]
In particular, the codegree of the VMRT of odd Lagrangian Grassmannians $X^3(m,m-1)$ can be derived from the following general result.

\begin{prop}
	The dual variety of the scroll $S_m(1^r,2)\subseteq \bP^{N(m,1^r,2)}$ is a hypersurface of degree $m+r+1$ if $m\geq r$.
\end{prop}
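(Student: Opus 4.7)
The plan is to reduce the statement to a single Chern-class integral on the projective bundle and then compute it. Set $X=\bP(E)$ with $E=E_m(1^r,2)=\sO_{\bP^m}(-1)^{\oplus r}\oplus\sO_{\bP^m}(-2)$; since $a_0=1>0$, the morphism $\Phi_m(1^r,2)$ is a closed embedding, so $S=S_m(1^r,2)\cong X$ is smooth of dimension $n=m+r$. Writing $L=\sO_{\bP(E)}(1)$, I would invoke the standard Chern-class formula for the dual degree, obtained via Porteus applied to the jet evaluation $H^0(L)\otimes\sO_X\to J^1L$: when $\check S$ is a hypersurface one has $\deg\check S=\int_X c_n(J^1 L)$, and this integer vanishes whenever the dual defect is positive. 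Hence showing that the integral equals $m+r+1$ will simultaneously establish non-defectivity and the desired codegree.

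Setting $\xi=c_1(L)$ and $h=\pi^*c_1(\sO_{\bP^m}(1))$, the Chow ring is $A^*(X)=\bZ[\xi,h]/(h^{m+1},(\xi-h)^r(\xi-2h))$. The jet sequence, together with the dualised tangent sequence of $\pi\colon X\to\bP^m$ and the two relative Euler sequences (one for $T_\pi$ and one for $T_{\bP^m}$), yields after a short calculation
\[
c(J^1 L)\;=\;\frac{(1+\xi-h)^{m+1}(1+h)^r(1+2h)}{1+\xi}.
\]
Some care is required: one must extract $c(\Omega_X^1\otimes L)$ directly from the exact sequences, since $T_\pi$ and $T_{\bP^m}$ are both quotient bundles whose formal Chern roots overcount their actual ranks, and the naïve splitting-principle formula gives a wrong answer.

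To integrate, I would substitute $\zeta=\xi-h$: the tautological relation becomes $\zeta^{r+1}=h\zeta^r$, and a direct binomial computation gives $\pi_*(\zeta^j)=0$ for $j<r$ and $\pi_*(\zeta^{r+\ell})=h^{\ell}$ for $\ell\geq 0$. The key simplification is then the identity
\[
\frac{(1+\zeta)^{m+1}}{1+\zeta+h}=\sum_{k=0}^{m}(-h)^k(1+\zeta)^{m-k}\quad\text{in }\bZ[\zeta,h]/(h^{m+1}),
\]
verified by multiplying through and using $(-h)^{m+1}=0$. Expanding each $(1+\zeta)^{m-k}$ binomially, separating its $\zeta^{<r}$ part (annihilated by $\pi_*$) from its $\zeta^{\geq r}$ part, and applying the pushforward rule reduces $\pi_*c(J^1 L)$ to an explicit polynomial in $h$. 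Extracting the coefficient of $h^m$, the leading contribution of $(1+h)^{m+r+1}$ produces $m+r+1$, and the auxiliary terms of size $\pm(-1)^{m-r}\,2r$ (coming respectively from the $(1+h)^{2r}(-h)^{m-r+1}$ correction and from the $\zeta^{<r}$ remainder) cancel exactly against each other.

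The hypothesis $m\geq r$ enters at the upper index of this sum: if $m<r$ every summand is killed by $\pi_*$, the integral vanishes, and $S_m(1^r,2)$ becomes dual defective. The main obstacle is purely bookkeeping: carrying the above combinatorial cancellation through the indices without a slip. The geometric content of the argument, namely the formula for $c(J^1 L)$ and the pushforward rule for low powers of $\zeta$, is clean and self-contained.
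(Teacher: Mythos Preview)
Your approach is correct in outline and genuinely different from the paper's. The paper argues geometrically: it identifies $S_m(1^r,2)$ with the blow-up $\mu\colon\Bl_{\bL}\bP^{m+r}\to\bP^{m+r}$ along a linear subspace $\bL\cong\bP^{r-1}$, observes that $\sO(\Lambda)\cong\mu^*\sO(2)\otimes\sO(-E)$, and hence obtains a linear isomorphism between $|\Lambda|$ and the system of quadrics in $\bP^{m+r}$ containing $\bL$. Under this correspondence, a general singular hyperplane section of $S_m(1^r,2)$ goes to a singular quadric through $\bL$; when $m\geq r$, a general such quadric is a cone with a single vertex off $\bL$, so $\check S$ is identified with a general linear slice of the dual of the Veronese $\nu_2(\bP^{m+r})$, whose degree is $m+r+1$ by Boole's formula. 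No Chern classes appear. Your route trades this geometry for a self-contained intersection computation; both yield the same bound $m\geq r$, yours via the vanishing of $\pi_*(1+\zeta)^{m-k}$ for $m-k<r$, the paper's via the vertex of a general singular quadric being a single point.

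Your formula $c(J^1L)=\dfrac{(1+\xi-h)^{m+1}(1+h)^r(1+2h)}{1+\xi}$, the substitution $\zeta=\xi-h$ with relation $\zeta^{r+1}=h\zeta^r$, the pushforward rule $\pi_*(\zeta^{r+\ell})=h^\ell$, and the telescoping identity all check out. The one concrete slip is in your description of the cancellation: carrying your decomposition through, the $(1+h)^{m+r+1}$ term contributes $m+r+1$, the $(-h)^{m+1}(1+h)^r$ correction contributes $(-1)^m r$, and the $\zeta^{<r}$ remainder contributes $-(-1)^m r$ (one computes $\sum_{j=0}^{r-1}(-1)^j(2r-2j-1)=r$). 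So the auxiliary terms are $\pm(-1)^m r$, not $\pm(-1)^{m-r}\cdot 2r$. The cancellation is real, but the numbers you quote are off; since you flag this step as the crux, it would be worth writing it out in full. A slightly cleaner alternative: observe that in $A^*(X)$ one has $\int_X c_n(J^1L)=\sum_{a\geq r}[\zeta^a h^{m+r-a}]C(\zeta,h)$, and hence $\int_X c_n(J^1L)=[t^{m+r}]C(t,t)-\sum_{a<r}[\zeta^a h^{m+r-a}]C$; the first term equals $[t^{m+r}](1+t)^{m+r+1}=m+r+1$ on the nose, and each summand in the second term vanishes individually by a short generating-function identity.
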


\begin{proof}
	Denote by $\Lambda$ the hyperplane section of $S_{m}(1^r,2)$. Firstly we recall that the projective variety $S_{m}(1^r,2)$ is isomorphic to the blowing-up of the projective space $\bP^{m+r}$ along a linear subspace $\bL\cong\bP^{r-1}$. Denote by $\mu:S_m(1^r,2)\rightarrow \bP^{m+r}$ the blowing-up and by $E$ the exceptional divisor. Then we have an isomorphism
	\[
	\sO_{S_m(1^r,2)}(\Lambda) \cong \mu^*\sO_{\bP^{m+r}}(2)\otimes \sO_{S_m(1^r,2)}(-E).
	\]
	In particular, taking push-forward yields a linear isomorphism
	\[
	p:|\Lambda|\rightarrow |\sO_{\bP^{m+r}}(2)\otimes \sI_{\bL}|=H_{\bL},
	\]
	where $\sI_{\bL}$ is the ideal sheaf of $\bL$. Denote by $\check{S}\subseteq |\Lambda|$ the dual variety of $S_{m}(1^r,2)$, i.e., the closure of singular elements. For a general point $[Q]\in \check{S}$, we may assume that the singular locus of $Q$ is not contained in $E$. This implies that the push-forward $p_*Q$ is a singular element in the linear system $|\sO_{\bP^{m+r}}(2)|$ which contains $\bL$.
	
	Conversely, as $r\leq m$, a general element in $H_{\bL}$ is smooth and a general singular element $[Q]\in H_{\bL}$ is a quadric hypersurface containing $\bL$ such that it is a cone with a single point $p\in \bP^{m+r}\setminus\bL$ as vertex and hence $p^{-1}[Q]$ is contained in $\check{S}$. As a consequence, the map $p$ induces a dominate map 
	\[
	\bar{p}:\check{S}\rightarrow \check{X}\cap H_{\bL}\subseteq |\sO_{\bP^{m+r}}(2)|,
	\]
	where $\check{X}$ is the dual variety of the Veronese embedding $X=\nu_2(\bP^{m+r})\subseteq |\sO_{\bP^{m+r}}(2)|$. As $\bP^{m+r}$ is homogeneous, the variety $\check{X}\cap H_{\bL}$ is an irreducible proper subvariety of $H_{\bL}$. In particular, the map $\bar{p}$ is an isomorphism. Note that $\check{X}$ is a hypersurface of degree $m+r+1$ by Boole formula \cite[Example 6.4]{Tevelev2005}, hence $\check{S}\subseteq |\Lambda|$ is a hypersurface of degree $m+r+1$.
\end{proof}

\begin{rem}
    Let $[Q]\in \check{S}$ be a general singular hyperplane section of $S$.  If $m<r$, then the divisor $p_*Q$ is a quadric cone containing $\bL$ with vertex $\bL'\subseteq \bP^{m+r}$, which is a $(r-m)$-dimensional linear subspace such that $\dim(\bL\cap \bL')=r-m-1$ In particular, the singular locus of $Q$ has dimension $r-m$ and this implies that the scroll $S_m(1^r,2)$ has dual defect $r-m$ \cite[Theorem 7.21]{Tevelev2005}.
\end{rem}

\paragraph{\textit{Linear section $V_k$ of the Grassmannian $\Gr(2,5)$.}}
The VMRT of the Grassmannian $\Gr(2,5)$ is projectively equivalent to the Segre embedding $\bP^1\times \bP^2\subseteq \bP^5$. Moreover, for $k\leq 3$, there is only one isomorphic class of codimension $k$ linear section $V_k$ of $\Gr(2,5)$. This implies that the VMRT of $V_k$ is projectively equivalent to a general linear section of $\bP^1\times \bP^2$ with codimension $k$. Then an easy computation shows that the VMRT of $V_2$ is the twisted cubic in $\bP^3$ whose dual variety is a quartic surface, see for instance \cite[Example 10.3]{Tevelev2005}.

\bigskip

\paragraph{\textit{Linear section $S_k$ of the spinor tenfold $\bS_5$.}}
The VMRT of the $10$-dimensional spinor variety $\bS_5$ is the Grassmannian $\Gr(2,5)\subseteq \bP^9$ in its Pl\"ucker embedding. Hence, the VMRT of the codimension $k$ linear section $S_k$ of $\bS_{5}$ is projectively equivalent to the smooth codimension $k$ linear section $V_k\subseteq \bP^{9-k}$ of the Grassmannian $\Gr(2,5)$. As $\Gr(2,5)\subseteq \bP^9$ has dual defect $2$, the linear section $V_k\subseteq \bP^{9-k}$ has dual defect $\max\{0,2-k\}$ \cite[Theorem 5.3]{Tevelev2005}. In the following we will compute the codegree of $Z=V_k$, $k=2$ or $3$, using the Katz-Kleiman formula \cite[Theorem 6.2]{Tevelev2005}:
\[
\text{codeg}(Z)=\sum_{i=0}^{\dim(Z)} (i+1) c_{n-i}(T^*Z)\cdot H^{\dim(Z)-i},
\]
where $H$ is the hyperplane section. To calculate the Chern classes of $Z$, firstly we write the total Chern classes of $\Gr(2,5)$ as
\[
\begin{pmatrix}
	1  &    &    &     \\
	5  & 12 &    &     \\
	11 & 30 & 25 &     \\
	15 & 35 & 30 & 33
\end{pmatrix}
\]
where the rows and columns are labelled from $0$, and the $(i,j)$-th element is the coefficient of the Schubert cycles $\sigma_{i,j}$. From the tangent sequence of $Z$ we have
\[
c(Z)\coloneqq c(V_k)=\frac{c(\Gr(2,5))}{(1+\sigma_1)^k},
\] 
where $\sigma_1\coloneqq \sigma_{1,0}$ denotes the ample generator of the Picard group, i.e., the hyperplane section. Using the Pieri's formula, that is, 
\[
\sigma_{a,b}\cdot \sigma_1 = \sigma_{a+1,b} + \sigma_{a,b+1},
\]
a routine computation then yields 
\[
c(V_2) = 
\begin{pmatrix}
	1  &     &       &     \\
	3  & 5   &       &     \\
	4  & 6   & 4     &     \\
	4  & 2   & \ast  & \ast
\end{pmatrix}
\]
and
\[
c(V_3) = 
\begin{pmatrix}
	1  &      &       &     \\
	2  & 3    &       &     \\
	2  & 1    & \ast  &     \\
	2  & \ast & \ast  & \ast
\end{pmatrix}
\]
Using again the Pieri's formula and our computations of the degrees of the Schubert classes we deduce that the codegree of $V_2$ and $V_3$ are $5$ and $10 $, respectively.

There is also an alternative more geometric way to see that the codegree of $V_2$ is $5$. Since the Grassmannian $\Gr(2,5)\subseteq \bP^{9}$ is self-dual, the dual variety of $V_k\subseteq \bP^{9-k}$, $k=1$ or $2$, is projectively equivalent to the image of $\Gr(2,5)$ under the projection $\pi_{\bL}:\bP^9\dashrightarrow \bP^{9-k}$ from a general linear subspace $\bL\subseteq \bP^9$ of dimension $k-1$, see for instance \cite[Theorem 5.3]{Tevelev2005}. As $\bL$ is general, the restriction of $\pi_{\bL}$ to $\Gr(2,5)$ is a birational morphism. Since $\Gr(2,5)$ is of degree $5$ and with dimension $6$, it follows that the dual variety of $V_2$ is a hypersurface in $\bP^7$ with degree $5$.

\begin{rem}
	\begin{enumerate}
		\item Let $X$ be a Fano manifold with Picard number $1$. Then the \emph{anti-canonical pseudoeffective slope $\mu(TX,-K_X)$ of $TX$} is bounded by the \emph{maximal slope} of $TX$ with respect to $-K_X$ (see \cite[Lemma 2.8]{FuLiu2021}). In particular, if $TX$ is semi-stable, then $\mu(TX,-K_X)$ is bounded by $1/\dim(X)$. Actually, it is expected that this should hold without the semi-stability assumption (cf. \cite[Conjecture 1.3]{FuLiu2021}). On the other hand, while the semi-stability of $TX$ is confirmed in many cases, \cite[Theorem 0.3]{Kanemitsu2021} says that the tangent bundles of the horospherical varieties $X^1(m)$ $(m\geq 4)$ and $X^4$ are not semi-stable. Thus it is natural and interesting to ask if their anti-canonical pseudoeffective slopes are (strictly) dominated by the reciprocal of their dimensions.
		
		\item For odd Lagrangian Grassmannians $X=X^3(m,m-1)$ $(m\geq 2)$, according to Table \ref{table.codegree-EC}, we have
		\[
		\mu(TX,-K_X)=\frac{2}{(m+1)(m+2)} < \frac{1}{\dim(X)}=\frac{2}{m(m+3)}.
		\]
		
		\item By \cite[Corollary 1.4]{Shao2020}, \cite[Theorem 1.14]{FuLiu2021} and Table \ref{table.codegree-EC} above, the anti-canonical pseudoeffective slope $\mu(TX,-K_X)$ of the varieties in Example \ref{e.Examples-EC-Picard-number-one} are determined except the hyperplane section $S_1$ of $\bS_5$. As the VMRT of $S_1$ is dual defective, maybe we need a different treatment.
	\end{enumerate}	
\end{rem}
	
\bibliographystyle{alpha}
\bibliography{bigEC}
\end{document}